\DeclareMathOperator*{\colim}{colim}
\numberwithin{equation}{section}
\begin{document}

\newtheorem{theorem}{Theorem}[section]
\newtheorem{thm}[theorem]{Theorem}
\newtheorem{lemma}[theorem]{Lemma}
\newtheorem{proposition}[theorem]{Proposition}
\newtheorem{corollary}[theorem]{Corollary}

\theoremstyle{definition}
\newtheorem{definition}[theorem]{Definition}
\newtheorem{example}[theorem]{Example}

\theoremstyle{remark}
\newtheorem{remark}[theorem]{Remark}

\newenvironment{magarray}[1]
{\renewcommand\arraystretch{#1}}
{\renewcommand\arraystretch{1}}

\newcommand{\quot}[2]{
{\lower-.2ex \hbox{$#1$}}{\kern -0.2ex /}
{\kern -0.5ex \lower.6ex\hbox{$#2$}}}

\newcommand{\mapor}[1]{\smash{\mathop{\longrightarrow}\limits^{#1}}}
\newcommand{\mapin}[1]{\smash{\mathop{\hookrightarrow}\limits^{#1}}}
\newcommand{\mapver}[1]{\Big\downarrow
\rlap{$\vcenter{\hbox{$\scriptstyle#1$}}$}}
\newcommand{\liminv}{\smash{\mathop{\lim}\limits_{\leftarrow}\,}}

\newcommand{\specif}[2]{\left\{#1\,\left|\, #2\right. \,\right\}}

\renewcommand{\bar}{\overline}
\newcommand{\de}{\partial}
\newcommand{\debar}{{\overline{\partial}}}
\newcommand{\per}{\!\cdot\!}
\newcommand{\Oh}{\mathcal{O}}
\newcommand{\sA}{\mathcal{A}}
\newcommand{\sB}{\mathcal{B}}
\newcommand{\sC}{\mathcal{C}}
\newcommand{\sE}{\mathcal{E}}
\newcommand{\sF}{\mathcal{F}}
\newcommand{\sG}{\mathcal{G}}
\newcommand{\sW}{\mathcal{W}}
\newcommand{\sH}{\mathcal{H}}
\newcommand{\sI}{\mathcal{I}}
\newcommand{\sJ}{\mathcal{J}}
\newcommand{\sL}{\mathcal{L}}
\newcommand{\sM}{\mathcal{M}}
\newcommand{\sP}{\mathcal{P}}
\newcommand{\sU}{\mathcal{U}}
\newcommand{\sQ}{\mathcal{Q}}
\newcommand{\sR}{\mathcal{R}}
\newcommand{\sV}{\mathcal{V}}
\newcommand{\sX}{\mathcal{X}}
\newcommand{\sY}{\mathcal{Y}}
\newcommand{\sN}{\mathcal{N}}
\newcommand{\sS}{\mathcal{S}}

\newcommand{\Ni}{\mathcal{N}_I}
\newcommand{\Nj}{\mathcal{N}_J}
\newcommand{\bM}{\mathbf{M}}
\newcommand{\bC}{\mathbf{C}}
\newcommand{\LI}{\mathbf{L}_{\sN}}
\newcommand{\pal}{\boldsymbol{\cdot}}

\newcommand{\Aut}{\operatorname{Aut}}
\newcommand{\Mor}{\operatorname{Mor}}
\newcommand{\Def}{\operatorname{Def}}
\newcommand{\Hom}{\operatorname{Hom}}
\newcommand{\RHom}{R\mspace{-2mu}\operatorname{Hom}}
\newcommand{\Hilb}{\operatorname{Hilb}}
\newcommand{\HOM}{\operatorname{\mathcal H}\!\!om}
\newcommand{\DER}{\operatorname{\mathcal D}\!er}
\newcommand{\Spec}{\operatorname{Spec}}
\newcommand{\Sh}{\operatorname{Sh}}
\newcommand{\Der}{\operatorname{Der}}
\newcommand{\Tor}{{\operatorname{Tor}}}
\newcommand{\Ext}{{\operatorname{Ext}}}
\newcommand{\End}{{\operatorname{End}}}
\newcommand{\END}{\operatorname{\mathcal E}\!\!nd}
\newcommand{\Image}{\operatorname{Im}}
\newcommand{\Id}{\operatorname{Id}}
\newcommand{\coker}{\operatorname{coker}}
\newcommand{\res}{\operatorname{res}}
\newcommand{\rk}{\operatorname{rk}}
\newcommand{\tot}{\operatorname{tot}}
\newcommand{\cone}{\operatorname{cone}}
\newcommand{\cocone}{\operatorname{cocone}}
\newcommand{\mA}{\mathfrak{m}_{A}}
\newcommand{\g}{\mathfrak{g}}
\newcommand{\gl}{\mathfrak{gl}}
\newcommand{\GL}{\operatorname{GL}}
\newcommand{\Tr}{\operatorname{Tr}}

\newcommand{\Coder}{\operatorname{Coder}}

\renewcommand{\Hat}[1]{\widehat{#1}}
\newcommand{\dual}{^{\vee}}
\newcommand{\desude}[2]{\dfrac{\de #1}{\de #2}}

\newcommand{\A}{\mathbb{A}}
\newcommand{\N}{\mathbb{N}}
\newcommand{\R}{\mathbb{R}}
\newcommand{\Z}{\mathbb{Z}}
\renewcommand{\H}{\mathbb{H}}
\renewcommand{\L}{\mathbb{L}}
\newcommand{\proj}{\mathbb{P}}
\newcommand{\K}{\mathbb{K}\,}
\newcommand\C{\mathbb{C}}
\newcommand\Del{\operatorname{Del}}
\newcommand\D{\operatorname{D}}
\newcommand\Tot{\operatorname{Tot}}
\newcommand\Grpd{\mbox{\bf Grpd}}

%\newcommand\é{\'e}
%\newcommand\è{\`e}
%\newcommand\à{\`a}
%\newcommand\ì{\`i}
%\newcommand\ù{\`u}
%\newcommand\ò{\`o }

%%%%%%%%%%%%%%%%%%%%%%%%%%%%%%%%%%%%%%%%%%%%%%%%%%%%%%%%%%%%%%%%%%%%5555

\newcommand{\rh}{\rightarrow}
\newcommand{\contr}{{\mspace{1mu}\lrcorner\mspace{1.5mu}}}

\newcommand{\bi}{\boldsymbol{i}}
\newcommand{\bl}{\boldsymbol{l}}

\newcommand{\MC}{\operatorname{MC}}
\newcommand{\TW}{\operatorname{TW}}
\newcommand{\DGMod}{\operatorname{DGMod}}
\newcommand{\Ch}{\operatorname{Ch}}
\newcommand{\Ho}{\operatorname{Ho}}
\newcommand{\PM}{\mathbf{Mod}}
\newcommand{\Mod}{\mathbf{Mod}}
\newcommand{\DGPM}{\mathbf{Mod}^{\ast}}
\newcommand{\QCoh}{\mathbf{QCoh}}
\newcommand{\DGSch}{\mathbf{DGSch}}
\newcommand{\DGAff}{\mathbf{DGAff}}

\newcommand{\Set}{\mathbf{Set}}
\newcommand{\Art}{\mathbf{Art}}
\newcommand{\Alg}{\mathbf{Alg}}
\newcommand{\DGArt}{\mathbf{DGArt}}
\newcommand{\CDGA}{\mathbf{CDGA}}
\newcommand{\CGA}{\mathbf{CGA}}
\newcommand{\DGLA}{\mathbf{DGLA}}
\newcommand{\solose}{\Rightarrow}
\newcommand{\PSI}{\Psi\mathbf{Sch}_I(\mathbf{M})}
\newcommand{\PSJ}{\Psi\mathbf{Sch}_J(\mathbf{M})}
\newcommand{\Sym}{\mbox{Sym}}

\title[Deformations of polystable sheaves on surfaces]{Deformations of polystable sheaves on surfaces: quadraticity implies formality}

\date{February 17, 2019}%ricordarsi di togliere il today per la versioni arXiv

\author[R. Bandiera]{Ruggero Bandiera}
\address{\newline
Universit\`a degli studi di Roma La Sapienza,\hfill\newline
Dipartimento di Matematica \lq\lq Guido
Castelnuovo\rq\rq,\hfill\newline
P.le Aldo Moro 5,
I-00185 Roma, Italy.}
\email{bandiera@mat.uniroma1.it}

\author[M. Manetti]{Marco Manetti}
\email{manetti@mat.uniroma1.it}
\urladdr{www.mat.uniroma1.it/people/manetti/}

\author[F. Meazzini]{Francesco Meazzini}
\email{meazzini@mat.uniroma1.it}
%\urladdr{www.mat.uniroma1.it/people/meazzini/}

\subjclass[2010]{14F05, 14D15, 16W50, 18G55}
\keywords{Deformation theory, polystable sheaves, formality, differential graded Lie algebras, $L_{\infty}$-algebras}
%18G55 homotopical algebra
%16W50 graded rings and modules
%14D15 formal methods, deformations
%14F05 Sheaves, derived categories of sheaves and related constructions

\maketitle

\begin{abstract}
We study relations between the quadraticity of the Kuranishi family of a coherent sheaf on a complex projective scheme
and the formality of the DG-Lie algebra of its derived endomorphisms. In particular,  we prove that   
for a polystable coherent sheaf of a smooth complex projective surface
the DG-Lie algebra of derived endomorphisms is formal if and only if the Kuranishi family  is quadratic.

\end{abstract}

\tableofcontents

\section{Introduction}

Recall that for a coherent sheaf $\sF$ over a projective scheme, we say that the Kuranishi family  is quadratic if the base space of the Kuranishi family (i.e., the semiuniversal deformation) of $\sF$ is a quadratic singularity. One of the main goals of this paper is to prove the following result.

\begin{theorem}\label{thm.maintheorem1} 
Let $\sF$ be a polystable coherent sheaf of a smooth complex projective surface.
Then the differential graded (DG) Lie algebra $\RHom(\sF,\sF)$ of derived endomorphisms is formal if and only if the  Kuranishi family of $\sF$ is  quadratic.
\end{theorem}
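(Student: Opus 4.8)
The plan is to reduce the geometric statement to a homotopical statement about the DG-Lie algebra $L := \RHom(\sF,\sF)$ and then to attack the two implications separately. The point of departure is the identification $\Def_{\sF} \cong \Def_L$ of the deformation functor of $\sF$ with the Maurer--Cartan deformation functor of $L$; under it the Kuranishi family is pro-represented by the hull of $\Def_L$, the tangent space is $\Ext^1(\sF,\sF) = H^1(L)$, and the obstructions lie in $\Ext^2(\sF,\sF) = H^2(L)$. Two structural features will drive the argument. First, because $\sF$ lives on a \emph{surface}, the graded vector space $H^\ast(L) = \Ext^\ast(\sF,\sF)$ is concentrated in degrees $0,1,2$. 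Second, because $\sF$ is \emph{polystable}, $H^0(L) = \End(\sF)$ is semisimple and $G := \Aut(\sF)$ is a reductive group acting on $L$ by conjugation; reductivity (equivalently, averaging over a maximal compact subgroup) is what will allow every construction below to be performed $G$-equivariantly.

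The implication ``formal $\Rightarrow$ quadratic'' is the routine one. If $L$ is formal then $\Def_L \cong \Def_{H^\ast(L)}$, where $H^\ast(L)$ is regarded as a graded Lie algebra with zero differential. For such an algebra the Maurer--Cartan equation reduces to $\tfrac12[x,x]=0$ with $x \in H^1(L)$, which is homogeneous of degree two; hence the hull is the quadratic singularity cut out in $H^1(L)$ by the components of the bracket $[\cdot,\cdot]\colon H^1(L)\times H^1(L)\to H^2(L)$, and the Kuranishi family is quadratic.

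For the converse I would first invoke the homotopy transfer theorem to replace $L$ by a minimal $L_\infty$-structure $\{l_n\}_{n\ge 2}$ on $H^\ast(L)$, with $l_2$ the induced graded bracket and an accompanying $L_\infty$-quasi-isomorphism back to $L$; by reductivity of $G$ the transfer data can be chosen $G$-equivariantly, so all the $l_n$ are $G$-equivariant. The functor $\Def$ of this minimal model still pro-represents the Kuranishi family, and its defining equation is the $L_\infty$ Maurer--Cartan series $\sum_{n\ge 2}\tfrac{1}{n!}\,l_n(x^{\otimes n}) = 0$ in $H^2(L)$. Recalling that $L$ is formal if and only if its minimal model is $L_\infty$-isomorphic to $(H^\ast(L),l_2)$, the task becomes: assuming the ideal generated by the above series is, after a formal change of coordinates on $H^1(L)$, generated by its quadratic part $l_2(x,x)$, show that the higher operations $l_n$ for $n\ge 3$ can be removed by an $L_\infty$-isomorphism.

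This last upgrade is the crux, and the step where I expect the real difficulty to lie, since quadraticity of the \emph{ideal} is a priori strictly weaker than the vanishing of the higher brackets. I would argue by induction, using at each stage an $L_\infty$-isomorphism to cancel the lowest surviving operation $l_n$ with $n\ge 3$. The obstruction to such a cancellation is a class built from $l_n$ living in a space of $G$-equivariant operations of cohomological degree $2$; reductivity of $G$ permits splitting off an equivariant piece, while the concentration of $H^\ast(L)$ in amplitude $[0,2]$ together with the Serre-duality symmetry of $\Ext^\ast(\sF,\sF)$ on the surface pins down its precise cohomological home. The heart of the matter is then to show that the quadraticity hypothesis forces this class to be a coboundary, so that $l_n$ is gauged away; iterating over $n$ and passing to the limit produces the desired formality quasi-isomorphism. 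An alternative, and possibly cleaner, route to the same end would be to exhibit a weight grading on $L$ (arising from the Hermitian--Einstein structure attached to polystability) with respect to which the bracket is weight-homogeneous and formality is automatic, quadraticity being exactly what rules out weight-violating higher operations; I would keep this in reserve as a way to make the obstruction computation explicit.
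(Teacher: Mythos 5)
Your proposal handles the easy direction correctly and rightly senses that reductivity of $\Aut(\sF)$ must enter, but the hard direction has a genuine gap at exactly the point you yourself flag as ``the crux'': you never show that quadraticity of the Kuranishi ideal forces the higher operations to be removable, and as formulated this step cannot work. The reason is degree bookkeeping. The Kuranishi ideal records only the restrictions $l_n\colon (H^1)^{\odot n}\to H^2$ of the minimal $L_\infty$ operations. Since $\sF$ is polystable, $H^0(L)=\End(\sF)\neq 0$, and the minimal model in general carries nonzero operations with some inputs in $H^0$ (e.g.\ $l_3\colon H^0\otimes H^1\otimes H^1\to H^1$, which has the correct degree $2-3=-1$); these are completely invisible to the Kuranishi algebra, so no hypothesis on the ideal --- coordinate changes, matrices over the power series ring, or otherwise --- gives your inductive ``gauge away $l_n$'' scheme anything to run on for those components. (Your fallback via a ``Serre-duality symmetry'' of $\Ext^*(\sF,\sF)$ is also unavailable on a general surface: such a cyclic symmetry requires $K_X\cong\Oh_X$, whereas the theorem holds for every smooth projective surface.)

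The paper's proof is structured precisely to quarantine this problem. It first builds (Theorem~\ref{thm.maintheorem3}, proved in Section~\ref{sec.three}) a DG-Lie representative $L$ of $\RHom(\sF,\sF)$ carrying a \emph{rational, finitely supported} action of $G=\Aut_X(\sF)$ --- itself nontrivial, since a random representative admits no such action and averaging needs rationality --- together with an invariant Lie subalgebra $\mathfrak{g}\subset Z^0(L)$ lifting $\Ext^0_X(\sF,\sF)$. One then replaces $L$ by the quasi-isomorphic semidirect product $\mathfrak{g}\ltimes K$ with $K$ concentrated in degrees $\geq 1$. Now $L$ and $K$ have the same Kuranishi algebra; and for $K$, whose cohomology sits in degrees $1,2$, the degree count above shows that the \emph{only} surviving operations of the minimal model are $(H^1)^{\odot n}\to H^2$, i.e.\ the minimal model \emph{is} the Kuranishi data. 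Hence for $K$ quadraticity is literally equivalent to formality (Theorem~\ref{thm.4sedici}): the coordinate change $\phi$ and matrix $(a_{ij})$ produced by commutative algebra (Proposition~\ref{prop:quad}) translate, through the Chevalley--Eilenberg dictionary, into an $L_\infty[1]$-isomorphism onto the quadratic model --- no obstruction-class induction is needed, contrary to what you anticipate. The genuinely hard remaining step, which your sketch conflates with the previous one, is transporting formality from $K$ back to $L$: this is the equivariant formality theorem (Theorem~\ref{th:Gformality} and Corollary~\ref{cor.Gformality}), where linear reductivity and the rational finitely supported action produce a $G$-equivariant formality morphism for $K$ whose Taylor coefficients are then $\mathfrak{g}$-equivariant, allowing the morphism to be extended over $\mathfrak{g}$ by the inclusion $\imath$. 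So reductivity is used for far more than making homotopy transfer equivariant, and without some substitute for this splitting-and-reassembly mechanism your induction cannot close.
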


The initial motivation was to provide a proof of the so called Kaledin-Lehn formality conjecture for polystable sheaves on 
projective K3 surfaces as a consequence of the quadraticity theorem of Kuranishi space proved by Arbarello and Sacc\`a \cite{AS} and by Yoshioka~\cite{yoshioka}. 

The above mentioned formality conjecture has been explicitly stated by Zhang in~\cite{Zh12}, where it is attributed to Kaledin and Lehn, and it was recently proved by Budur and Zhang \cite{budur}; we also refer to 
\cite{budur} for a detailed description of the formality conjecture and its implication in the theory of moduli spaces 
of sheaves on K3 surfaces. However, the proof of Budur and Zhang is done along a completely different approach, and  
our result  applies to every smooth projective surface, not necessarily K3.
It is worth mentioning that in a very recent paper~\cite{BaMaMe} we approached the problem from an alternative point of view finally enlarging the class of surfaces on which the formality conjecture holds: K3 surfaces, Abelian surfaces, Enriques surfaces and bielliptic surfaces.
In both our papers, the approaches involve techniques of $L_{\infty}$-algebras, in particular we investigate derived endomorphisms as a DG-Lie algebra instead of as an associative DG algebra; it is important to point out that the formality is a stronger result in the associative case.

Since the automorphisms group of a polystable sheaf is a product of groups $\GL_n(\C)$ (see e.g. \cite{HL}), 
Theorem~\ref{thm.maintheorem1} is an immediate consequence of the following more general result. 

\begin{theorem}\label{thm.maintheorem2} 
Let $\sF$ be a coherent sheaf of finite projective dimension on a complex projective scheme $X$. Assume that:
\begin{enumerate}

\item the algebraic group $\Aut_X(\sF)$ of automorphisms of $\sF$ is linearly reductive;

\item the trace map $\Tr\colon \Ext^i_X(\sF,\sF)\to H^i(\Oh_X)$ is injective for every $i\ge 3$.
\end{enumerate}
Then the DG-Lie algebra $\RHom(\sF,\sF)$ of derived endomorphisms is formal if and only if the Kuranishi family of $\sF$ is quadratic.
\end{theorem}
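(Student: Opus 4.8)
The plan is to prove both implications, of which \emph{formality $\Rightarrow$ quadraticity} is elementary and \emph{quadraticity $\Rightarrow$ formality} is the substantial one. Throughout write $L=\RHom(\sF,\sF)$, so that $H^*(L)=\Ext^*_X(\sF,\sF)$ carries the graded Lie bracket induced by composition, and recall that $L$ governs the deformation functor $\Def_\sF$: the germ of the Kuranishi base is a hull of $\Def_\sF\cong\Def_L$, with tangent space $\Ext^1_X(\sF,\sF)$ and obstructions in $\Ext^2_X(\sF,\sF)$. Passing to a minimal $L_\infty$-model $(H^*(L),\{q_k\})$ of $L$, with $q_2$ the bracket, the Kuranishi base is cut out in $\Ext^1_X(\sF,\sF)$ by the obstruction map $\kappa(x)=\sum_{k\ge2}\tfrac1{k!}q_k(x,\dots,x)\in\Ext^2_X(\sF,\sF)$, whose quadratic leading term is $\tfrac12[x,x]$. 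If $L$ is formal we may take $q_k=0$ for all $k\ge3$, so that $\kappa(x)=\tfrac12[x,x]$ is purely quadratic and the base is the quadratic cone $\{[x,x]=0\}$; this settles one direction with no hypotheses needed.

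For the converse I would first use the trace to reduce to the trace-free part. The inclusion $R\Gamma(\Oh_X)\to L$, $a\mapsto a\cdot\Id_\sF$, is central with abelian image, and $\Tr$ splits it off, exhibiting $L$ as a product of DG-Lie algebras $L\cong R\Gamma(\Oh_X)\times L_0$ with $L_0=\ker\Tr$. Since an abelian DG-Lie algebra is formal with smooth (unobstructed) associated functor, formality of $L$ is equivalent to formality of $L_0$, and quadraticity of the Kuranishi base is unaffected by the smooth factor. By hypothesis (2) the cohomology $H^*(L_0)$ is concentrated in degrees $0,1,2$, and by hypothesis (1) its degree-zero part $\g:=H^0(L_0)$ is the Lie algebra of the reductive group $\Aut_X(\sF)$, acting on $H^1(L_0)$ and $H^2(L_0)$ through $q_2$.

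The engine of the proof is an equivariant minimal model: because $G:=\Aut_X(\sF)$ is linearly reductive it acts on $L_0$ by DG-Lie automorphisms and homotopy transfer can be performed $G$-equivariantly, so we may take $(H^*(L_0),\{q_k\})$ with all $q_k$ equivariant. The key numerical observation is that, since $H^*(L_0)$ lives in degrees $[0,2]$ and $q_k$ has degree $2-k$, every higher operation ($k\ge3$) is of exactly one of two types: either (a) it has at least one input in degree $0$, or (b) all inputs lie in degree $1$ and the output in degree $2$. Operations of type (b) are precisely those assembled by $\kappa$; operations of type (a) are governed by the Chevalley--Eilenberg cohomology of $\g$ with coefficients in the finite-dimensional modules $H^i(L_0)$, which vanishes in the relevant range by Whitehead-type vanishing since $\g$ is reductive. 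Thus the type-(a) operations can be removed by an equivariant $L_\infty$-gauge, and formality of $L_0$ is reduced to trivializing the type-(b) operations, i.e.\ to straightening $\kappa$ to its quadratic part by an equivariant change of coordinates on $\Ext^1_X(\sF,\sF)$.

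The main obstacle is exactly this last step: converting the \emph{scheme-theoretic} hypothesis that $\kappa^{-1}(0)$ is a quadratic singularity into the \emph{operadic} conclusion that the cubic and higher components $q_k|_{(H^1)^{\otimes k}}$ are $L_\infty$-gauge trivial. Quadraticity of the ideal is a priori weaker than vanishing of these components, since higher brackets could contribute terms lying in the ideal generated by the quadrics $[x,x]$ without altering the scheme. The plan is to exploit reductivity once more---decomposing $\Ext^1$ and $\Ext^2$ into $G$-isotypic pieces and using that the quadratic part $q_2\colon\Sym^2\Ext^1\to\Ext^2$ is equivariant with controlled image---to show inductively that each higher component can be absorbed into a further equivariant coordinate change as long as the zero locus remains quadratic, thereby straightening $\kappa$ to $\tfrac12[x,x]$ and producing an $L_\infty$-isomorphism of $L_0$ with its cohomology. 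Together with the type-(a) step and the trace reduction this gives formality of $L$.
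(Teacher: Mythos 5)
Your outline reproduces the paper's skeleton (trace to kill high degrees, reductivity to handle degree $0$, then ``quadratic $\Rightarrow$ formal'' for cohomology concentrated in the tangent/obstruction degrees), but the step you yourself call ``the main obstacle'' is precisely the heart of the theorem, and your plan for it does not contain the idea that closes it. In the paper this step is Theorem~\ref{thm.4sedici}, and it uses no group action whatsoever: dualizing, a minimal $L_\infty[1]$ structure on $H^0\oplus H^1$ (your $\Ext^1\oplus\Ext^2$) is the same thing as an $m$-tuple of power series $f_i=R^\vee(\eta_i)\in\mathfrak{m}^2$, and an $L_\infty[1]$ \emph{isomorphism} onto $(H,r_2)$ is the same thing as a pair: a coordinate change $\phi$ of $\K[[x_1,\ldots,x_n]]$ \emph{together with} a matrix $A=(a_{ij})$ over $\K[[x_1,\ldots,x_n]]$ with $A(0)=\Id$ such that $\sum_j a_{ij}f_j^{(2)}=\phi(f_i)$ (Proposition~\ref{prop:quad}). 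The matrix $A$ corresponds to Taylor coefficients of the $L_\infty$ morphism having one input in the obstruction space $\Ext^2$, and it is exactly what absorbs the phenomenon you correctly worry about: higher terms of $\kappa$ lying in the ideal generated by the quadrics without changing the scheme. A ``change of coordinates on $\Ext^1$'', equivariant or not, which is all your plan allows for, cannot achieve this; conversely, the isotypic-decomposition induction you propose is unnecessary, since quadraticity of the ideal yields the pair $(\phi,A)$ by elementary manipulations of generators (the two lemmas preceding Proposition~\ref{prop:quad}). As written, your proposal leaves the central implication unproved.

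Three further steps would fail or are unjustified as written. (i) The splitting $L\cong R\Gamma(\Oh_X)\times L_0$ requires the composition $R\Gamma(\Oh_X)\to L\xrightarrow{\ \Tr\ }R\Gamma(\Oh_X)$, which is multiplication by $\rk(\sF)$, to be invertible; this fails for rank-zero (torsion) sheaves, which the theorem and its polystable application cover. The paper uses only what hypothesis (2) actually gives: the trace is an $L_\infty$ morphism into a homotopy abelian DG-Lie algebra, injective on $H^{\ge 3}$, and Lemma~\ref{lem.4sette} then produces a minimal model splitting off everything in degrees $\ge3$ as an abelian direct factor (Corollary~\ref{cor.4sedici}); no splitting of complexes is needed. (ii) You assume that $\Aut_X(\sF)$ acts by automorphisms on a DG-Lie representative of $\RHom(\sF,\sF)$ in a way that makes homotopy transfer equivariant. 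This requires each graded piece to be a semisimple representation, hence rational and (to survive $\Hom$'s, which do not preserve rationality in infinite dimensions) finitely supported; constructing such a representative is one of the paper's main technical contributions (Theorem~\ref{thm.maintheorem3}, Sections 2--3) and cannot simply be asserted. (iii) Killing the ``type (a)'' operations by ``Whitehead-type vanishing'' is false as stated: $\g$ is reductive but not semisimple (for polystable $\sF$ it is a product of $\gl_n$'s, with center, and already $H^1(\g,\K)\neq0$ for abelian $\g$), and even for semisimple $\g$ higher Chevalley--Eilenberg cohomology does not vanish. What actually works, and what the paper does, is group-level semisimplicity: Reynolds operators make the transferred $L_\infty$ data $G$-equivariant (Theorem~\ref{th:Gformality}), $\g=\Ext^0$ is split off at the cochain level by Theorem~\ref{thm.maintheorem3}(2), and a $G$-equivariant formality morphism of the positive-degree part $K$ is extended by zero on $\g$-inputs, the $L_\infty$ relations then reducing to $\g$-equivariance of the transferred Taylor coefficients (Corollary~\ref{cor.Gformality}).
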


Here by saying that $\sF$ is of finite projective dimension we mean that the projective dimensions of its stalks are (uniformly) bounded.

Recall that the DG-Lie algebra of derived endomorphisms of $\sF$ controls the deformation theory of $\sF$ (see e.g. \cite{AS,budur,FIM,FM}) in the usual way, via Maurer-Cartan 
modulus gauge action, see e.g. \cite{GoMil1,Man}, and its cohomology compute the derived functors of $\Hom_X$, i.e., 
\[ H^i(\RHom(\sF,\sF))=\Ext^i_X(\sF,\sF)\,.\]

The ``only if'' implication in Theorem~\ref{thm.maintheorem2} 
is a completely standard consequence of the homotopy invariance of the Kuranishi family: a detailed proof of this fact is given  in \cite{GoMil1,GoMil2},  where it is used in order to prove that 
certain  moduli spaces of representations of the fundamental group of a compact K\"{a}hler manifold have quadratic singularities, cf. also \cite{MartPadova} and references therein.

The idea of proof of Theorem~\ref{thm.maintheorem2} is the following. 
Assume for simplicity that $\Ext^i_X(\sF,\sF)=0$ for every $i\ge 3$ and 
suppose first heuristically that $\Aut_X(\sF)$ is a discrete group (clearly this is true only for $\sF=0$), 
then $H^i(\RHom(\sF,\sF))=0$ for every $i\not=1,2$; under these assumptions 
quadraticity implies formality by the well known fact that 
the Kuranishi moduli space uniquely determines the minimal $L_{\infty}[1]$-model of 
$\RHom(\sF,\sF)$ up to (non-canonical) isomorphism; for reference purposes we give a proof of  this fact in Subsection \ref{subsec.formalityquadra}. 

In the general case, the first step is to show that there exists  a DG-Lie representative $L$ 
of $\RHom(\sF,\sF)$ equipped with a rational algebraic action of the group $\Aut_X(\sF)$, compatible with the natural induced action on 
deformations of $\sF$ and with the adjoint action on the graded vector space 
$\Ext^*_X(\sF,\sF)$. The second step is to show that the above DG-Lie algebra $L$ can be chosen as the semidirect product of 
the Lie algebra $\Ext^0_X(\sF,\sF)$ and a DG-Lie algebra $K$ concentrated in strictly positive degrees. By general facts $L$ and $K$ have the same Kuranishi family, hence 
$L$ is quadratic iff $K$ is quadratic iff $K$ is formal.
The last step is to use the reductivity of $\Aut_X(\sF)$ and some argument of \cite{Man4} for proving that $K$ is formal if and only if $L$ is formal.

The construction of an equivariant representative of derived endomorphisms requires some care, since in general 
the rationality of  (infinite dimensional) representations is not preserved by $\Hom$ functors:
for instance, given a rational representation $V$ of the algebraic group $\GL_1(\C)=\C^*$, 
the dual representation $\Hom_{\C}(V,\C)$  
is rational if and only if $V$ has a finite number of isotypic components.
This suggests to restrict our attention to rational finitely supported representations: 
a detailed introduction  to  finitely supported representations is given in Section~\ref{sec.finitelysupp}; here we only anticipate that 
a semisimple representation is finitely supported if and only if  it has a finite number of isotypic components.

This restriction forces to avoid injective resolutions of $\sF$ and to work, following \cite{FM}, with local projective resolutions 
with respect to a finite affine cover of $X$. Then at the end of Section~\ref{sec.three} we shall prove the 
following result.

\begin{theorem}\label{thm.maintheorem3} 
Let $\sF$ be a coherent  sheaf of finite projective dimension on a complex projective scheme $X$. 
Then there exists a  DG-Lie algebra $L$, representing $\RHom(\sF,\sF)$, together with a 
DG-Lie action of $\Aut_X(\sF)$ on $L$ (i.e., a morphism of groups
 $\Aut_X(\sF)\to \Aut_{DG-Lie}(L)$) such that:
\begin{enumerate}

\item the action of $\Aut_X(\sF)$ on every homogeneous component $L^i$, $i\in \Z$, is rational algebraic and finitely supported;

\item  there exists an invariant  Lie subalgebra
$\mathfrak{g}\subset Z^0(L)$, canonically isomorphic to  $\Ext^0_X(\sF,\sF)$ via 
the projection $Z^0(L)\to H^0(L)$; 

\item via the above canonical isomorphism $\mathfrak{g}\cong\Ext^0_X(\sF,\sF)$ the adjoint action of $\mathfrak{g}$ on $L$ is
the same as the infinitesimal action of $\Aut_X(\sF)$.

\end{enumerate}
\end{theorem}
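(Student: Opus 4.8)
The plan is to represent $\RHom(\sF,\sF)$ by gluing \emph{local projective} resolutions via the Thom--Whitney totalization, and to carry the tautological action of $G:=\Aut_X(\sF)$ through every step of the construction. Note first that $G$ is the group of units of the finite dimensional associative algebra $E:=\End_X(\sF)=\Ext^0_X(\sF,\sF)$, so it is an affine algebraic group with Lie algebra $\g=\operatorname{Lie}(G)\cong E$ (commutator bracket). Fix a finite affine open cover $\{U_i\}_{i\in I}$ of $X$. The starting observation is that for every affine open $U\subseteq X$ the module $\sF(U)$ is a \emph{rational} $G$-module: the action is induced by the linear map $E\to\End_{\Oh(U)}(\sF(U))$, so each orbit map $g\mapsto g\cdot s$ is the restriction to $G\subseteq E$ of a linear (hence algebraic) map with image in the finite dimensional subspace $E\cdot s\subseteq\sF(U)$. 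Choosing finitely many $\Oh(U)$-module generators of the coherent sheaf $\sF(U)$ and enlarging them to the finite dimensional subrepresentations they span, we obtain a finite dimensional $G$-subrepresentation $W\subseteq\sF(U)$ generating $\sF(U)$ over $\Oh(U)$; the induced surjection $\Oh(U)\otimes_\C W\twoheadrightarrow\sF(U)$ is then $G$-equivariant, with $G$ acting only on the finite dimensional fibre $W$.

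Iterating on the kernels, which are again coherent, $G$-equivariant and of finite projective dimension, on each intersection $U_{i_0\cdots i_p}$ we obtain a finite $G$-equivariant projective resolution $\mathcal P_{i_0\cdots i_p}\to\sF|_{U_{i_0\cdots i_p}}$ whose terms are finitely generated, $G$-equivariant and finitely supported, being free modules $\Oh\otimes_\C W_a$ (with $W_a$ finite dimensional) except possibly in top degree, where one has a projective $G$-equivariant quotient of such a free module. Forming the endomorphism DG-Lie algebras $\END(\mathcal P_{i_0\cdots i_p})$, arranging them into a semicosimplicial DG-Lie algebra along the face restrictions, and taking the Thom--Whitney totalization produces, as in \cite{FM}, a DG-Lie algebra $L$ quasi-isomorphic to $\RHom(\sF,\sF)$ with $H^\bullet(L)=\Ext^\bullet_X(\sF,\sF)$; since $G$ acts on the whole diagram by conjugation it acts on $L$ by DG-Lie automorphisms, i.e.\ we get $\Aut_X(\sF)\to\Aut_{DG-Lie}(L)$. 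For condition (1) one checks rationality and finite support degreewise: every homogeneous component is assembled, through the finitely many products (the cover is finite) and the totalization (the complexes are bounded), out of the Hom-spaces $\Hom_{\Oh}(\Oh\otimes W_a,\Oh\otimes W_b)=\Oh\otimes_\C\Hom_\C(W_a,W_b)$ and their evident variants for the projective terms, on which $G$ acts through a finite dimensional representation while acting trivially on $\Oh$ (and trivially on the polynomial forms of the totalization); by the stability properties of finitely supported representations developed in Section~\ref{sec.finitelysupp} these remain rational and finitely supported under all the operations involved.

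For conditions (2) and (3) I would exploit that $G$ acts on each $\mathcal P_{i_0\cdots i_p}$ by $\Oh$-linear \emph{chain} automorphisms; differentiating the corresponding homomorphism of groups yields a Lie algebra map from $\g\cong E$ into the $\Oh$-linear degree zero endomorphisms commuting with the differential, that is into $Z^0(\END(\mathcal P_{i_0\cdots i_p}))$. These maps are compatible with the face restrictions and therefore define a Lie algebra homomorphism $\g\to Z^0(L)$; its image is the required subalgebra $\g\subset Z^0(L)$, and since the endomorphism associated to $\phi\in E=\End_X(\sF)$ induces $\phi$ itself on $H^0(\mathcal P)=\sF$, the composite $\g\to Z^0(L)\to H^0(L)=\Ext^0_X(\sF,\sF)$ is the identity, giving the canonical isomorphism of (2). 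This $\g$ is $G$-invariant because conjugating the infinitesimal action by $g\in G$ gives the infinitesimal action of the adjoint $\operatorname{Ad}_g$, which again lies in $\g$. Finally (3) is automatic: the adjoint action of $\g$ on $L$ is the derivative of the conjugation action of $G$, which by construction is the infinitesimal action of $\Aut_X(\sF)$.

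The main obstacle is reconciling the two competing requirements in condition (1): strict equivariance of the \emph{entire} glued construction against finite support. A functorial resolution (bar type, or an injective one) would make the $G$-action manifestly natural but is hopelessly far from finitely supported---indeed the failure of $\Hom$-functors to preserve finite support, equivalently the non-rationality of unrestricted duals noted in the introduction, is exactly why injective resolutions must be avoided here. The device that breaks the tension is the finite dimensional generating subrepresentation $W$ above, which forces every resolution term to be $\Oh$ tensored with a finite dimensional representation and so keeps all the Hom-spaces finitely supported; the residual difficulty, which I expect to require the most care, is to perform the semicosimplicial gluing so that the face maps and the comparison data on overlaps are themselves $G$-equivariant---a point the Thom--Whitney totalization is well suited to handle, since all of its structure maps are natural in the semicosimplicial diagram.
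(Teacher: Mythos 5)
Your local constructions are individually fine---in particular your direct argument that $\sF(U)$ is a rational $\Aut_X(\sF)$-module, via the finite dimensional invariant subspaces $\End_X(\sF)\cdot s$, is a neat shortcut---but the gluing step contains a genuine gap. You build the resolutions $\mathcal{P}_{i_0\cdots i_p}$ \emph{independently} on each intersection and then assert that the DG-Lie algebras $\END(\mathcal{P}_{i_0\cdots i_p})$ can be ``arranged into a semicosimplicial DG-Lie algebra along the face restrictions''. Those face maps do not exist: an endomorphism of $\mathcal{P}_{i_0\cdots\widehat{i_j}\cdots i_p}$ restricted to $U_{i_0\cdots i_p}$ is in no natural way an endomorphism of the unrelated resolution $\mathcal{P}_{i_0\cdots i_p}$, because endomorphism DG-Lie algebras are not functorial along comparison maps (conjugation needs an isomorphism, and the comparison quasi-isomorphisms between two ad hoc resolutions are only defined up to homotopy; moreover making them $G$-equivariant is itself problematic, since Theorem~\ref{thm.maintheorem3} assumes no reductivity, so no averaging is available). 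The Thom--Whitney totalisation cannot repair this: it takes a semicosimplicial DG-Lie algebra as \emph{input} and does not produce the missing face maps or the strictly cocycle-compatible comparison data. So what you call a ``residual difficulty'' is in fact the central obstruction, and it is exactly what dictates the paper's strategy.

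The paper's way around it is to make all local data come from one \emph{global} object. By Serre's theorem there is an $\Aut_X(\sF)$-equivariant surjection $H^0(X,\sF(n))\otimes\Oh_X(-n)\to\sF$, where the group acts through the finite dimensional space $H^0(X,\sF(n))$ and trivially on $\Oh_X(-n)$; iterating on the (coherent, automatically equivariant) kernels yields a single finite locally free equivariant resolution $\sE^{\ast}\to\sF$ (Proposition~\ref{prop.resolution}). Restricting this one complex to the opens of the cover gives a strictly commutative diagram, i.e.\ the $A_{\pal}$-module $\Psi\sE^{\ast}$, and the cofibrant replacement $C^{\ast}_{\pal}(\sE^{\ast})=\check{C}^{\ast}(\Delta_{\alpha})\otimes_{\Z}\sE^{\ast}(U_{\alpha})$ is canonical and functorial in $\sE^{\ast}$, so both the $G$-action and the lifts of endomorphisms of $\sF$ carry over to $L=\Hom^{\ast}_{A_{\pal}}(C^{\ast}_{\pal}(\sE^{\ast}),C^{\ast}_{\pal}(\sE^{\ast}))$ with no gluing choices whatsoever (Theorem~\ref{theorem:goodmodel}). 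If you replace your independent resolutions on the intersections by the restrictions of such a global equivariant resolution, the rest of your argument---rationality and finite support degreewise via $\Hom_{\Oh}(\Oh\otimes W_a,\Oh\otimes W_b)=\Oh\otimes_{\C}\Hom_{\C}(W_a,W_b)$ together with the closure properties of Section~\ref{sec.finitelysupp}, and conditions (2)--(3) by differentiating the conjugation action---does go through essentially as in the paper.
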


It is interesting to point out that, as a consequence  of Theorem~\ref{thm.maintheorem3}, 
we have that if  
$\Aut_X(\sF)$ is linearly reductive, then $L$ admits an equivariant Hodge decomposition and the usual Kuranishi argument, see e.g. \cite[Sec. 4]{NijRich64} and \cite[Sec. 2]{GoMil2}, gives an equivariant Kuranishi family.

\section{Finitely supported  and rational representations}
\label{sec.finitelysupp}

Let $G$ be a group and $\K$ a fixed field. For the purposes of this section we will only need $\K$ to be infinite, while from Section~\ref{sec.three} on we shall assume $char(\K)=0$. Recall (see e.g., \cite{procesi}) that a $\K$-linear representation $G\to GL(V,\K)$, equivalently a left $\K[G]$-module, where $V$ is a (possibly infinite dimensional) vector space, is called:

\begin{itemize}
\item \emph{trivial} if  $gv=v$ for every $g\in G$, $v\in V$;

\item \emph{irreducible} if the only $G$-invariant subspaces of $V$ are $0,V$;

\item \emph{semisimple} if $V$ is a direct sum of irreducible representations.

\end{itemize}

Given two representations $V,W$ we shall denote by $\Hom_{\K}(V,W)^G\subseteq \Hom_{\K}(V,W)$ the  
vector subspace of $G$-equivariant linear maps. By definition, $\Hom_{\K}(V,W)^G$ is the set of morphisms
from $V$ to $W$ in the category of $\K$-linear representations of $G$.

\begin{definition}\label{def.isotypic}
Denote by $\K[G]^\vee$ the set of isomorphism classes of irreducible representations of $G$. Given 
$\alpha\in \K[G]^\vee$,  
the \emph{isotypic component} $V_\alpha$ of a representation $V$ is the sum of all
irreducible $G$-invariant subspaces $H\subset V$ in the class $\alpha$. 
\end{definition}

Notice that if $H$ is an irreducible representation in the class $\alpha$, then $V_{\alpha}\not=0$ if and only if 
$\Hom_{\K}(H,V)^G\not=0$.

\begin{lemma}\label{lem.sottomoduloisotipica} 
In the above situation, if $\alpha\in \K[G]^\vee$,
$K\subset V$  is an irreducible submodule and $K\cap V_\alpha\not=0$, then $K$ is in the class 
$\alpha$.
\end{lemma}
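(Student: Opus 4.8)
The plan is to reduce the statement to the claim that every irreducible submodule contained in $V_\alpha$ is automatically of class $\alpha$, and then to establish that claim by a projection argument combined with Schur's lemma.

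First I would observe that $K\cap V_\alpha$ is $G$-invariant, being the intersection of two invariant subspaces, and that it is contained in the irreducible module $K$. Since it is nonzero by hypothesis, irreducibility of $K$ forces $K\cap V_\alpha=K$, that is, $K\subseteq V_\alpha$. It therefore suffices to show that an arbitrary irreducible submodule $K\subseteq V_\alpha$ lies in the class $\alpha$.

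Next I would exploit the definition of $V_\alpha$ as the sum $V_\alpha=\sum_i H_i$ of all its irreducible submodules $H_i$ belonging to the class $\alpha$, each isomorphic to a fixed irreducible representation $H$ of that class. Choosing a nonzero vector $v\in K$, irreducibility gives $K=\K[G]\,v$; since $v$ lies in some finite partial sum, we obtain $K\subseteq H_{i_1}+\cdots+H_{i_n}$. A finite sum of irreducible submodules decomposes, by an elementary induction, as a direct sum $\bigoplus_k H_{i_k}$ of a subfamily, so in particular $K\subseteq\bigoplus_k H_{i_k}$ with every summand of class $\alpha$.

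Finally, I would apply the projections $\pi_k$ onto the summands of this finite direct sum. Since $K\neq 0$, some restriction $\pi_k|_K\colon K\to H_{i_k}$ is a nonzero morphism of representations between irreducible modules, hence an isomorphism by Schur's lemma, in the field-independent form asserting that a nonzero morphism of simple modules has trivial kernel and full image. Thus $K\cong H_{i_k}\cong H$, and $K$ is of class $\alpha$, as desired. I expect the only real subtlety to be the passage to a direct sum decomposition of the isotypic component in the possibly infinite-dimensional setting; restricting attention to the finite partial sum that contains a chosen generator of $K$ sidesteps any appeal to Zorn's lemma and keeps the argument valid over an arbitrary infinite field.
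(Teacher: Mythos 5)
Your proof is correct, and it reaches the conclusion by a genuinely different mechanism than the paper's. Both arguments share the same opening move: finiteness places $K$ inside a finite sum of class-$\alpha$ irreducibles (you first show $K\subseteq V_\alpha$ and then trap a generator of $K$ in a finite partial sum; the paper works directly with a finite family $H_1,\ldots,H_n$ whose sum meets $K$ nontrivially). From there the paper avoids any direct-sum decomposition: it picks the minimal index $r$ with $K\cap\sum_{i\le r}H_i\neq 0$, so that $K$ injects into the quotient $\left(\sum_{i\le r}H_i\right)/\left(\sum_{i<r}H_i\right)$, onto which $H_r$ surjects, and irreducibility of both $K$ and $H_r$ forces both maps to be isomorphisms. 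You instead invoke the standard semisimplicity refinement --- a finite sum of irreducible submodules is a direct sum of a subfamily --- and then apply Schur's lemma to a nonzero projection $\pi_k|_K\colon K\to H_{i_k}$. Your route buys familiarity: it is the textbook argument that a simple submodule of a semisimple module is isomorphic to one of its simple summands, and your remark that Schur's lemma in the kernel/image form requires no algebraic closedness is exactly right. The paper's route buys economy: the minimal-index quotient trick sidesteps the direct-sum refinement induction entirely, making the proof self-contained in a few lines. Both arguments are valid over an arbitrary field, as the statement requires.
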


\begin{proof} 
If $K\cap V_\alpha\not=0$, then  
there exists a finite number 
of irreducible submodules $H_1,\ldots,H_n$ in the class $\alpha$ such that 
$K\cap \sum H_i\not=0$. Thus there exists an index $1\le r\le n$ such that 
\[ K\cap \sum_{i<r}H_i=0,\qquad K\cap \sum_{i\le r}H_i\not=0,\]
and then $K\subset \sum_{i\le r}H_i$. A fortiori $\sum_{i<r}H_i\not=\sum_{i\le r}H_i$ and
the restrictions to $K$ and $H_r$ of the projection to the quotient give two non-trivial maps
\[ \xymatrix{&H_r\ar[d]^-{p}\\
K\ar[r]^-{i}&\dfrac{\;\sum_{i\le r}H_i\;}{\;\;\sum_{i<r}H_i\;\;}}\]
with $p$ surjective. Since both $K$ and $H_r$ are irreducible it follows that both $i$ and $p$ are isomorphisms, and hence $K$ belongs to the class $\alpha$.
\end{proof}

\begin{lemma}\label{lem.isotipicapertriviale} 
In the above situation, if $W$ is a trivial representation and 
$M= V\otimes W$, then $M_{\alpha}= V_{\alpha}\otimes W$ for every class $\alpha\in \K[G]^\vee$.
\end{lemma}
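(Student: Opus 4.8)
The plan is to fix a basis $\{w_j\}_{j\in J}$ of the trivial representation $W$, which gives a decomposition of $G$-modules
\[
M=V\otimes W=\bigoplus_{j\in J}V\otimes w_j,
\]
where each map $\phi_j\colon V\to V\otimes w_j$, $v\mapsto v\otimes w_j$, is an isomorphism of $G$-representations precisely because $gw_j=w_j$ for every $g\in G$. Being a $G$-isomorphism, $\phi_j$ carries the isotypic component $V_\alpha$ to the isotypic component of $V\otimes w_j$, namely $V_\alpha\otimes w_j$; summing over $j$ yields the identification $V_\alpha\otimes W=\bigoplus_{j\in J}V_\alpha\otimes w_j$ inside $M$. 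The proof then splits into the two inclusions $V_\alpha\otimes W\subseteq M_\alpha$ and $M_\alpha\subseteq V_\alpha\otimes W$.

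For the first inclusion I would write $V_\alpha=\sum_H H$ as a sum of irreducible submodules $H\subset V$ of class $\alpha$; then each $H\otimes w_j$ is an irreducible submodule of $M$ isomorphic to $H$ via $\phi_j$, hence of class $\alpha$, so $H\otimes w_j\subseteq M_\alpha$. Distributing the tensor product over the sums gives $V_\alpha\otimes W=\sum_{H,j}H\otimes w_j\subseteq M_\alpha$.

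For the reverse inclusion, which I expect to be the crux, I would argue via the equivariant projections $\pi_j\colon M\to V\otimes w_j$. Let $K\subseteq M$ be an irreducible submodule of class $\alpha$; its image $\pi_j(K)$ is a quotient of the irreducible module $K$, hence either $0$ or isomorphic to $K$ and therefore of class $\alpha$. In either case $\pi_j(K)\subseteq(V\otimes w_j)_\alpha=V_\alpha\otimes w_j$. Since every $m\in K$ satisfies $m=\sum_{j}\pi_j(m)$ with $\pi_j(m)\in\pi_j(K)$, we obtain $K\subseteq\sum_j V_\alpha\otimes w_j=V_\alpha\otimes W$. As $M_\alpha$ is by definition the sum of all such irreducible $K$, this gives $M_\alpha\subseteq V_\alpha\otimes W$ and completes the argument.

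The only point requiring care is that $W$ may be infinite dimensional, so that an irreducible $K\subseteq M$ need not lie in any single slice $V\otimes w_j$; the projection argument is designed precisely to handle this, using that each element of $V\otimes W$ is a finite combination of the $v\otimes w_j$. Note that no semisimplicity hypothesis on $V$ or $W$ is needed, since the isotypic components are defined as sums of irreducible submodules of arbitrary representations.
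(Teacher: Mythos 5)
Your proposal is correct and takes essentially the same approach as the paper: both proofs reduce the inclusion $M_\alpha\subseteq V_\alpha\otimes W$ to $G$-equivariant maps from $M$ to copies of $V$ induced by linear functionals on $W$ (your projections $\pi_j$ coincide, under the identification $V\otimes w_j\cong V$, with the paper's maps $v\otimes w\mapsto f_i(w)v$), combined with the observation that the image of an irreducible module under an equivariant map is either zero or irreducible of the same class. The only cosmetic difference is that you fix a global basis of $W$ once and for all, whereas the paper chooses functionals adapted to each element $h=\sum_i v_i\otimes w_i$ separately.
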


\begin{proof} If 
$H\subset V$ is an  irreducible submodule in the class $\alpha$, then $H\otimes W$ is a direct sum of copies of $H$, 
therefore $H\otimes W\subset M_{\alpha}$ and this implies $V_{\alpha}\otimes W\subset M_{\alpha}$.
Conversely, if 
$H\subset V\otimes W$ is  a  irreducible submodule  in the class
$\alpha$, we need to prove that $h\in   V_{\alpha}\otimes W$ for every $h\in H$.
Writing $h=\sum_{i=1}^n v_i\otimes w_i$ with $v_i\in V\setminus\{0\}$, $w_i\in W$ and $w_1,\ldots,w_n$ linearly independent, for every $i$ we can find 
a linear map $f_i\colon W\to \K$ such that $f(w_i)=1$ and $f(w_j)=0$ for $j\not=i$; 
the images of 
$H$ under the $G$-equivariant maps 
\[V\otimes W\to V,\qquad v\otimes w\mapsto f_i(w)v,\]
are non trivial and hence isomorphic to $H$. Therefore $v_i\in V_{\alpha}$ for every $i$ and 
$h\in V_{\alpha}\otimes W$.
\end{proof}

\begin{definition}
The \emph{support} $S_G(V)\subset \K[G]^\vee$ of a representation $V$ of $G$ is the set of isomorphism classes of irreducible representations
$H$ such that  
$\Hom_{\K}(H,V/W)^G\not=0$ for some $G$-invariant subspace $W\subset V$.
A representation $V$ is called \emph{finitely supported} if its support is finite.  
\end{definition}

If $U\subset V$ is a $G$-invariant subspace then   
it is obvious that $S_G(V/U)\subseteq S_G(V)$ and it is easy to see that $S_G(U)\subseteq S_G(V)$: 
in fact if $W\subset U$ is an invariant subspace and $H$ an irreducible representation such that 
$\Hom_{\K}(H,U/W)^G\not=0$ 
then $U/W\subset V/W$ and therefore $\Hom_{\K}(H,V/W)^G\not=0$. Moreover
$S_G(V\oplus W)=S_G(V)\cup S_G(W)$: it is clear that  $S_G(V)\cup S_G(W)\subset S_G(V\oplus W)$; conversely if 
$V\oplus W\xrightarrow{p}U$ is a surjective morphism of representations and $K\subset U$ is irreducible, then 
either $K\subset p(V)$, and then $K\in S_G(V)$, or 
$K\subset U/p(V)$, and then $K\in S_G(W)$.

\begin{lemma}\label{lem.calcolosupporto} 
Let $V$ be a representation of $G$ and $\alpha\in \K[G]^\vee$ such that 
$V_{\alpha}\not=0$. Then $S_G(V)=\{\alpha\}\cup S_G(V/V_\alpha)$. 
\end{lemma}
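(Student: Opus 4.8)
The plan is to prove the two inclusions separately; the containment $\{\alpha\}\cup S_G(V/V_\alpha)\subseteq S_G(V)$ is routine, while the reverse containment is where the isotypic-component machinery of the previous lemmas enters.

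For the easy inclusion I would first observe that $\alpha\in S_G(V)$ follows directly from the hypothesis $V_\alpha\neq 0$: choosing the invariant subspace $W=0$ together with any irreducible $H\subset V_\alpha$ in the class $\alpha$, the inclusion $H\hookrightarrow V=V/W$ is a nonzero equivariant map, so $\Hom_{\K}(H,V/W)^G\neq 0$. Next, since $V/V_\alpha$ is the quotient of $V$ by the invariant subspace $V_\alpha$, the remark preceding the statement already gives $S_G(V/V_\alpha)\subseteq S_G(V)$. Combining these two facts yields $\{\alpha\}\cup S_G(V/V_\alpha)\subseteq S_G(V)$.

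For the reverse inclusion, which is the substance of the lemma, I would take a class $\beta\in S_G(V)$ with $\beta\neq\alpha$ and aim to show $\beta\in S_G(V/V_\alpha)$. By definition there exist a $G$-invariant subspace $W\subset V$ and an irreducible submodule $K\subset V/W$ in the class $\beta$. The natural move is to enlarge $W$ to the invariant subspace $W'=W+V_\alpha$, which contains $V_\alpha$, so that $V/W'$ is a quotient of $V/V_\alpha$; it then suffices to show that the image of $K$ under $V/W\twoheadrightarrow V/W'=(V/W)/\overline{V_\alpha}$ survives and stays in the class $\beta$, where $\overline{V_\alpha}$ denotes the image of $V_\alpha$ in $V/W$.

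The key step, and the only genuine obstacle, is to verify that enlarging $W$ to $W'$ does not kill $K$. Here I would use that $\overline{V_\alpha}$, being the image of the $\alpha$-isotypic (hence semisimple) module $V_\alpha$, is a sum of irreducibles in the class $\alpha$ and is therefore contained in the $\alpha$-isotypic component $(V/W)_\alpha$. Since $K$ is irreducible and lies in the class $\beta\neq\alpha$, Lemma~\ref{lem.sottomoduloisotipica} applied to $V/W$ forces $K\cap(V/W)_\alpha=0$, and a fortiori $K\cap\overline{V_\alpha}=0$. Consequently the composite $K\hookrightarrow V/W\twoheadrightarrow V/W'$ is injective, so its image is an irreducible submodule of $V/W'$ isomorphic to $K$ and hence in the class $\beta$. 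This exhibits $\beta$ in the support of the quotient $V/V_\alpha$, completing the reverse inclusion and thus the proof.
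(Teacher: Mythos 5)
Your proof is correct and takes essentially the same route as the paper's: both reduce the nontrivial inclusion to Lemma~\ref{lem.sottomoduloisotipica}, by noting that the image of $V_\alpha$ in any quotient of $V$ is a sum of irreducibles of class $\alpha$ (hence lies in the $\alpha$-isotypic component of that quotient), so an irreducible submodule of class $\beta\neq\alpha$ survives the further quotient by that image, which is itself a quotient of $V/V_\alpha$. The only cosmetic difference is that you present the quotient as $V/W$ and enlarge $W$ to $W+V_\alpha$, while the paper works with a surjection $p\colon V\to W$ and the image $p(V_\alpha)$; the underlying argument is identical.
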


\begin{proof} The only nontrivial inclusion to prove is  
$S_G(V)\subset \{\alpha\}\cup S_G(V/V_\alpha)$. 
Let $p\colon V\to W$ be a surjective morphism of representations and let 
$K\subset W$ be an irreducible submodule; we need to prove that if 
$K$ is not in the class $\alpha$ then its isomorphism class belongs to $S_G(V/V_\alpha)$.
Since $W/p(V_{\alpha})$ is a quotient of $V/V_\alpha$, it is sufficient to prove that 
$K\not\subset p(V_{\alpha})$ and therefore that 
$\Hom_{\K}(K,W/p(V_{\alpha}))^G\not=0$.

By definition $V_{\alpha}=\sum_{i}H_i$, where every $H_i\subset V$ is irreducible in the class $\alpha$, 
thus $p(V_\alpha)=\sum_{i}p(H_i)$ is contained in the isotypic component $W_{\alpha}$. 
If $K\subset p(V_{\alpha})\subset W_\alpha$ then by Lemma~\ref{lem.sottomoduloisotipica} the isomorphism class of $K$ is $\alpha$. 

\end{proof}

\begin{corollary} Let $V$ be a finite dimensional representation and $W$ a trivial representation.
Then $S_G(V\otimes W)=S_G(V)$ is finite.
\end{corollary}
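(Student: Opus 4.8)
The plan is to prove the corollary by combining the two lemmas already established with the inductive structure furnished by Lemma~\ref{lem.calcolosupporto}. The statement asserts two things: that $S_G(V\otimes W)=S_G(V)$, and that this set is finite. The finiteness is the easier half and I would dispatch it first, since it will also guide the proof of the equality.

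\medskip

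\emph{First I would settle finiteness.} Because $V$ is finite dimensional, it contains at most $\dim_{\K} V$ linearly independent irreducible submodules, so its semisimplification has finitely many isotypic components, and $S_G(V)$ is a finite set. The same bound applies to any quotient of $V$, so finiteness is stable under the quotient-taking that defines the support. Granting the equality $S_G(V\otimes W)=S_G(V)$, finiteness of $S_G(V\otimes W)$ follows immediately.

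\medskip

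\emph{Next I would prove the equality.} The natural approach is induction on the number of isotypic components of $V$, using Lemma~\ref{lem.calcolosupporto} to peel off one isotypic component at a time. If $V=0$ there is nothing to prove. Otherwise pick $\alpha\in\K[G]^\vee$ with $V_\alpha\neq 0$. By Lemma~\ref{lem.isotipicapertriviale}, applied to the trivial representation $W$, we have $(V\otimes W)_\alpha=V_\alpha\otimes W$, and in particular $(V\otimes W)_\alpha\neq 0$ as well. Since tensoring by the fixed vector space $W$ is exact, the quotient $(V\otimes W)/(V\otimes W)_\alpha=(V/V_\alpha)\otimes W$. Now Lemma~\ref{lem.calcolosupporto} gives
\[
S_G(V\otimes W)=\{\alpha\}\cup S_G\bigl((V/V_\alpha)\otimes W\bigr),\qquad
S_G(V)=\{\alpha\}\cup S_G(V/V_\alpha).
\]
Since $V/V_\alpha$ is again finite dimensional with strictly fewer isotypic components than $V$, the inductive hypothesis yields $S_G((V/V_\alpha)\otimes W)=S_G(V/V_\alpha)$, and substituting into the two displays above gives $S_G(V\otimes W)=S_G(V)$, completing the induction.

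\medskip

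\emph{The main point requiring care} is the exactness step that identifies $(V\otimes W)/(V_\alpha\otimes W)$ with $(V/V_\alpha)\otimes W$ as $G$-representations, and the verification that the hypotheses of Lemma~\ref{lem.isotipicapertriviale} genuinely transfer along the induction---that is, that $V/V_\alpha$ remains finite dimensional (clear) and that $W$ is still trivial (unchanged). Everything else is a direct application of the lemmas, so I do not anticipate a serious obstacle beyond bookkeeping the isotypic decomposition correctly.
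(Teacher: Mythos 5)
Your strategy---peeling off one isotypic component at a time using Lemma~\ref{lem.isotipicapertriviale}, the exactness identification $(V\otimes W)/(V_\alpha\otimes W)\cong(V/V_\alpha)\otimes W$, and Lemma~\ref{lem.calcolosupporto}---is exactly the paper's argument, but your induction is not well-founded. You induct on the number of nonzero isotypic components and claim that $V/V_\alpha$ has strictly fewer of them than $V$; this is false for non-semisimple $V$, and the corollary does not assume semisimplicity. Concretely, let $G=\Z$ act on $V=\K^2$ through the unipotent matrix $\begin{pmatrix}1&1\\0&1\end{pmatrix}$: the only irreducible submodule of $V$ is the trivial line $\langle e_1\rangle$, so $V$ has exactly one nonzero isotypic component, namely $V_\alpha=\langle e_1\rangle$ with $\alpha$ the trivial class, while the quotient $V/V_\alpha$ is again the one-dimensional trivial representation, which also has exactly one nonzero isotypic component. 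The measure you induct on does not decrease, so the inductive hypothesis cannot be invoked for $V/V_\alpha$; the same phenomenon occurs for any non-split extension of irreducible representations.

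The repair is immediate, and it is what the paper does: induct on $\dim_{\K}V$ instead. Since $V_\alpha\neq0$, we have $\dim V/V_\alpha<\dim V$, and the rest of your argument then goes through verbatim; moreover the finiteness of $S_G(V\otimes W)=S_G(V)$ falls out of the same induction (each step contributes at most the single new class $\alpha$, and there are at most $\dim V$ steps), so your separate preliminary finiteness paragraph becomes unnecessary. That paragraph is in any case a bit loose: classes in $S_G(V)$ correspond to irreducible subquotients of $V$ (irreducibles embedding into some quotient $V/W$), not to irreducible submodules, so the bound you want really requires a Jordan--H\"older argument rather than counting ``linearly independent irreducible submodules''.
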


\begin{proof} We prove the result by induction on the dimension of $V$.
If $V=0$ there is nothing to prove; 
if $V\not=0$, since $\dim V<\infty$ 
there exists an irreducible submodule $H\subset V$,
and then also a nontrivial isotypic component $V_{\alpha}\subset V$.
Denoting  $M=V\otimes W$, by Lemma~\ref{lem.isotipicapertriviale} we have $M_{\alpha}=V_\alpha\otimes W$. 
By Lemma~\ref{lem.calcolosupporto} and the inductive assumption we have that 
$S_G(V/V_{\alpha}\otimes W)=S_G(V/V_{\alpha})$ is finite and then
\[ S_G(M)=\{\alpha\}\cup S_G(M/M_{\alpha})=\{\alpha\}\cup S_G(V/V_{\alpha}\otimes W)=
\{\alpha\}\cup S_G(V/V_{\alpha})=S_G(V)\,\]
is also finite.
\end{proof}

The following result is clear.

\begin{lemma}\label{lemma.finsup}
A representation $V$ is semisimple and  finitely supported if and only if there exists a finite number of irreducible representations $H_1,\ldots,H_n$ and trivial representations 
$W_1,\ldots,W_n$ such that 
\[ V=(H_1\otimes W_1)\oplus\cdots\oplus (H_n\otimes W_n)\,.\]
\end{lemma}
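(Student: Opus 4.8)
The plan is to prove the two implications separately, using the isotypic decomposition of semisimple representations together with the properties of the support recorded just before the statement.

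For the implication ($\Leftarrow$), suppose $V=(H_1\otimes W_1)\oplus\cdots\oplus (H_n\otimes W_n)$ with each $H_i$ irreducible and each $W_i$ trivial. First I would observe that choosing a basis of $W_i$ exhibits $H_i\otimes W_i$ as a direct sum of copies of $H_i$; hence $V$ is a direct sum of irreducibles and is therefore semisimple. For the support, I would invoke the already established identity $S_G(V'\oplus V'')=S_G(V')\cup S_G(V'')$ to reduce to computing $S_G(H_i\otimes W_i)$. Since $H_i\otimes W_i$ is a (possibly infinite) direct sum of copies of the single irreducible $H_i$, every quotient is again a sum of copies of $H_i$ by semisimplicity, so a nonzero equivariant map out of an irreducible forces that irreducible to lie in the class of $H_i$. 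Thus $S_G(H_i\otimes W_i)$ is empty when $W_i=0$ and the singleton $\{[H_i]\}$ otherwise, and consequently $S_G(V)\subseteq\{[H_1],\ldots,[H_n]\}$ is finite.

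For the implication ($\Rightarrow$), suppose $V$ is semisimple and finitely supported, and write $V=\bigoplus_{j}S_j$ as a direct sum of irreducible submodules. The key observation is that only finitely many isomorphism classes occur among the $S_j$: indeed, for each $j$ the inclusion $S_j\hookrightarrow V$ shows $\Hom_\K(S_j,V)^G\neq 0$, so $[S_j]\in S_G(V)$, which is finite by hypothesis. Let $\alpha_1,\ldots,\alpha_n$ be the distinct classes appearing, and for each $i$ fix a representative irreducible $H_i$ in the class $\alpha_i$. Grouping the summands by isomorphism class, I would take $W_i$ to be the multiplicity space, namely a vector space with trivial $G$-action whose dimension equals the number of indices $j$ with $[S_j]=\alpha_i$. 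Since each such $S_j$ is isomorphic to $H_i$, one obtains $\bigoplus_{j:[S_j]=\alpha_i}S_j\cong H_i\otimes W_i$, and hence $V\cong (H_1\otimes W_1)\oplus\cdots\oplus (H_n\otimes W_n)$.

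The step requiring the most care is this regrouping in the second implication: one must check that collecting all irreducible summands of a fixed isomorphism class into a single tensor product $H_i\otimes W_i$ with a trivial multiplicity space is legitimate even when the multiplicity is infinite. This is where Lemma~\ref{lem.sottomoduloisotipica} enters implicitly, to guarantee that the isotypic component $V_{\alpha_i}$ coincides with $\bigoplus_{j:[S_j]=\alpha_i}S_j$ and that summands of distinct classes meet only in zero, so that $V=\bigoplus_i V_{\alpha_i}$ is a genuine internal direct sum. Everything else reduces to the elementary behaviour of the support under direct sums and quotients, which explains why the statement is flagged as clear.
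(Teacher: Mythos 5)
Your proof is correct. The paper offers no argument for this lemma --- it is simply declared clear --- and your two implications are exactly the natural ones built from the material immediately preceding it (the identity $S_G(V\oplus W)=S_G(V)\cup S_G(W)$, Lemma~\ref{lem.sottomoduloisotipica} to pin down irreducible submodules of a sum of copies of a single irreducible, and the regrouping of irreducible summands into trivial multiplicity spaces, which works verbatim for infinite multiplicities), so there is nothing to correct or compare.
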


\begin{remark}\label{rmk.subobjectquotient}
Assume now that $G$ is an algebraic group over an infinite field $\K$. 
Recall that a representation $\phi\colon G\to GL(V,\K)$ is called \emph{rational} if every vector $v\in V$ belongs to a finite dimensional $G$-invariant subspace 
$U\subset V$ and the group homomorphism $G\to GL(U,\K)$ is algebraic, cf. \cite{procesi}.
Notice that:
\begin{enumerate}

\item  every irreducible rational representation is finite dimensional;

\item let $U\subset V$ be an invariant subspace of a representation, 
by general representation theory  if $V$ is semisimple (resp.: rational), then also 
$U$ and $V/U$ are semisimple (resp.: rational); 

\item if $V$ is a finite dimensional rational representation and $W$ is a trivial representation, then $V\otimes W$ is a rational representation that is finitely supported for every subgroup of $G$.
\end{enumerate}
\end{remark}

%\blue{Similar notions naturally arise in several contests; for instance, given a Hopf algebra $H$ over $\K$ and denoted by $H^{\ast}=\Hom_{\K}(H,\K)$, the category of left $H$-comodules is equivalent to the category of tame left $H^{\ast}$-modules, where a $H^{\ast}$-module $M$ is called \emph{tame} if the submodule generated by $m$ is finite-dimensional for every $m\in M$, cf.~\cite[Proposition~2.5.5]{Hov99}.}

\bigskip

\section{A rational and finitely supported model for $\RHom(\sF,\sF)$}
\label{sec.three}

\subsection{Preliminaries and notation}

Our aim is to study derived endomorphisms of a quasi-coherent sheaf $\sF$ on a projective scheme $X$ over a field $\K$ of characteristic $0$.
Even if most of the following results hold under mild assumptions, for the sake of simplifying the exposition we decided to fix the above hypothesis on $X$ throughout all this section.
We begin by introducing the geometric framework we shall work with, and by briefly recalling the results needed.

The main tool we will make use of is the category $\Mod(A_{\pal})$ of modules over (the diagram $A_{\pal}$ representing) the scheme $X$. Fix an open affine covering $\sU=\{U_h\}$ for $X$, and consider its nerve
\[ \sN=\{ \alpha=\{h_0,\dots,h_k\}\,\vert\,U_{\alpha}=U_{h_0}\cap\dots \cap U_{h_k}\neq\emptyset \} \]
which carries a \emph{degree function} $\deg\colon\sN\to\N$ defined by $\deg(\{h_0,\dots,h_k\}) = k$; moreover $\sN$ can be seen as a poset by setting $\alpha\leq\beta$ if and only if $\alpha\subseteq\beta$.
Then to $X$ it is associated the diagram $A_{\pal}$ of commutative unitary $\K$-algebras defined as
\[ A_{\pal}\colon \sN\to \Alg_{\K} \; ,\qquad \qquad \alpha\mapsto A_{\alpha}=\Gamma(U_{\alpha},\Oh_X) \]
where the map $A_{\alpha}\to A_{\beta}$ is induced by the inclusion $U_{\beta}=\Spec(A_{\beta})\hookrightarrow \Spec(A_{\beta})=U_{\alpha}$ for any $\alpha\leq\beta$ in $\sN$.

For every unitary commutative ring $A$ we shall denote by $\DGMod(A)$ the category of cochain complexes of $A$-modules, equipped with the projective model structure \cite[Sec. 2.3]{Hov99}.

\begin{remark}
For the moment we need neither the covering nor the nerve to be finite, so that we could keep working on an arbitrary $\K$-scheme $X$ which is only assumed to be separated, which is sufficient for $A_{\pal}$ to be well-defined (i.e. intersections of affines are affines).
\end{remark}

\begin{definition}\label{def.pseudo-module}
An $A_{\pal}$\textbf{-module} $\mathcal{F}$ over $X$, with respect to the fixed covering $\sU$, consists of the following data:
\begin{enumerate}
\item an object $\sF_{\alpha}\in\DGMod(A_{\alpha})$, for every $\alpha\in\sN$,
\item a morphism $f_{\alpha\beta}\colon \sF_{\alpha}\otimes_{A_{\alpha}}A_{\beta}\to \sF_{\beta}$ in $\DGMod(A_{\beta})$, for every $\alpha\leq\beta$ in $\sN$,
\end{enumerate}
satisfying the \emph{cocycle condition} $f_{\beta\gamma}\circ\left(f_{\alpha\beta}\otimes\Id_{A_{\gamma}}\right) = f_{\alpha\gamma}$, for every $\alpha\leq\beta\leq\gamma$ in $\sN$.
\end{definition}

Similar notions were considered in~\cite{EE,FK,GS,Sto}. Taking advantage of the standard projective model structure on DG-modules, the category $\Mod(A_{\pal})$ has been endowed with a cofibrantly generated model structure, see~\cite[Theorem 3.9]{FM}, where weak equivalences and fibrations are detected pointwise. In order to work with quasi-coherent sheaves, we need a (homotopical) version of quasi-coherence for $A_{\pal}$-modules: $\sF\in\Mod(A_{\pal})$ is called \emph{quasi-coherent} if all the maps $f_{\alpha\beta}$ introduced above are quasi-isomorphisms, see~\cite[Definition 3.12]{FM}.

Now, denote by $\Ho(\QCoh(A_{\pal}))$ the category of quasi-coherent $A_{\pal}$-modules localised with respect to the weak equivalences. Then there exists an equivalence of triangulated categories
\[ \Psi\colon \D(\QCoh(X)) \to \Ho(\QCoh(A_{\pal})) \]
where $\D(\QCoh(X))$ denotes the unbounded derived category of quasi-coherent sheaves on $X$, see~\cite[Theorem 5.7]{FM}. A partial result in this direction was previously proven in~\cite[Proposition 2.28]{BF}.

\begin{remark}
The functor $\Psi$ introduced above may be easily defined as follows. Given a complex $\sG^{\ast}$ of quasi-coherent sheaves on $X$, with a little ambiguity of notation we shall denote by $\Psi\sG^{\ast}$ the $A_{\pal}$-module defined by $(\Psi\sG^{\ast})_{\alpha}=\sG^{\ast}(U_{\alpha})$, where for any $\alpha\leq\beta$ in $\sN$ the map
\[ (\Psi\sG^{\ast})_{\alpha}\otimes_{A_{\alpha}}A_{\beta} \to (\Psi\sG^{\ast})_{\beta} \]
is the natural isomorphism given degreewise by the restriction maps of the sheaves $\sG^k$, $k\in\mathbb{Z}$.
Then define 
\[ \Psi[\sG^{\ast}] = [\Psi\sG^{\ast}] \in \Ho(\QCoh(A_{\pal})) \]
for any $[\sG^{\ast}] \in \D(\QCoh(X))$.
\end{remark}

The aim of the next section is to describe $\RHom(\sF,\sF)$ for a given quasi-coherent sheaf, in terms of a cofibrant replacement of $\Psi\sF$, where $\sF$ has to be thought of as a complex concentrated in degree $0$.

\bigskip

\subsection{Derived endomorphisms of complexes of locally free sheaves}
\label{section.derivedEnd}

Throughout this subsection we shall consider a fixed \emph{bounded above} complex of locally free sheaves $\sE^{\ast}$. The aim is to give an explicit description of derived endomorphisms $\RHom(\sE^{\ast},\sE^{\ast})$. Following~\cite{FM}, the idea is to replace the $A_{\pal}$-module $\Psi\sE^{\ast}$ by a cofibrant replacement, whose endomorphisms in the homotopy category $\Ho(\QCoh(A_{\pal}))\simeq \D(\QCoh(X))$ will represent the desired derived endomorphisms of the given complex $\sE^{\ast}$.

Take an open affine cover $\{U_h\}_{h\in H}$ and let $\sN$ be its nerve. Fix a total order on $H$, and for each $\alpha\in\sN$ denote by $\Delta_{\alpha}$ the oriented abstract simplicial complex whose faces are defined as the non-empty subsets of $\alpha$. The associated chain complex will be denoted by $C_{\ast}(\Delta_{\alpha})$; recall that for any $r\in\Z$ the rank of the free $\Z$-module $C_{r}(\Delta_{\alpha})$ counts the number of $r$-faces of $\Delta_{\alpha}$, i.e. $\rk \left(C_r(\Delta_{\alpha})\right)={\binom{\deg(\alpha)+1}{r+1}}$. More precisely:
\[ \begin{aligned}
C_r(\Delta_{\alpha})=\bigoplus_{\substack{\beta\leq\alpha \\ \vert\beta\vert =r+1}}\Z\cdot\beta &\longrightarrow \bigoplus_{\substack{\gamma\leq\alpha \\ \vert\gamma\vert =r}}\Z\cdot\gamma= C_{r-1}(\Delta_{\alpha}) \\
\beta=(i_0,\dots,i_r) &\mapsto \sum_{j=0}^r(-1)^j(i_0,\dots,\widehat{i_j},\dots,i_r)
\end{aligned} \]
Concerning its homology we have $H_0(C_{\ast}(\Delta_{\alpha};\Z))=\Z$ and $H_r(C_{\ast}(\Delta_{\alpha};\Z))=0$ for every $r\neq 0$.

We now introduce the $A_{\pal}$-module $C^{\ast}_{\pal}(\sE^{\ast})$, which we will prove to be the above mentioned cofibrant replacement of $\Psi\sE^{\ast}$. In order to avoid possible confusion we shall always denote by $\pal$ the dependence on $\sN$ and respectively by $\ast$ the degrees of the complexes; moreover, following the standard notation we shall use labels on the top to denote degrees of \emph{cochain} complexes and on the bottom for \emph{chain} complexes. Therefore, for any $\alpha\in\sN$ we can define the cochain complex $\left(\check{C}^{\ast}(\Delta_{\alpha}), \check{\partial}\right)$ as
\[ \check{C}^{r}(\Delta_{\alpha}) = C_{-r}(\Delta_{\alpha}) \; ; \qquad \qquad \check{\partial}^r = \partial_{-r}\colon \check{C}^{r}(\Delta_{\alpha}) \to \check{C}^{r+1}(\Delta_{\alpha})\]
and eventually $C^{\ast}_{\alpha}(\sE^{\ast})= \check{C}^{\ast}(\Delta_{\alpha})\otimes_{\Z}\sE^{\ast}(U_{\alpha})$. By definition, the cohomology of $C^{\ast}_{\alpha}(\sE^{\ast})$ is non trivial only in degree zero: $H^0\left( C^{\ast}_{\alpha}(\sE^{\ast}) \right) \cong \sE(U_{\alpha})$.

Our next goal is to prove that the $A_{\pal}$-module $C^{\ast}_{\pal}(\sE^{\ast})$ is cofibrant in $\Mod(A_{\pal})$. By \cite[Theorem 3.9]{FM}, this is equivalent to prove that for every choice of $\alpha\in\sN$ the natural \emph{latching} map
\[ l_{\alpha}\colon \colim_{\gamma<\alpha}\left(C^{\ast}_{\gamma}(\sE^{\ast})\otimes_{A_{\gamma}}A_{\alpha}\right)\to C^{\ast}_{\alpha}(\sE^{\ast}) \]
is a cofibration of DG-modules over $A_{\alpha}$. To this aim, consider the short exact sequence
\[ 0\to \colim_{\gamma<\alpha} \check{C}^{\ast}(\Delta_{\gamma}) \xrightarrow{\iota_{\alpha}} \check{C}^{\ast}(\Delta_{\alpha}) \to K_{\alpha}\to 0 \]
and notice that $K_{\alpha}=\coker(\iota_{\alpha})$ is a cochain complex concentrated in degree $-\deg(\alpha)$: $K_{\alpha}~=~\Z[\deg(\alpha)]\cdot\alpha$. Now, the latching map
\[ \l_{\alpha} \colon \colim_{\gamma<\alpha}\left(C^{\ast}_{\gamma}(\sE^{\ast})\otimes_{A_{\gamma}}A_{\alpha}\right) \cong \colim_{\gamma<\alpha}\left( \check{C}^{\ast}(\Delta_{\gamma}) \right)\otimes_{\Z}\sE^{\ast}(U_{\alpha}) \longrightarrow \check{C}^{\ast}(\Delta_{\alpha})\otimes_{\Z}\sE^{\ast}(U_{\alpha}) = C^{\ast}_{\alpha}(\sE^{\ast}) \]
equals $\iota_{\alpha}\otimes \Id_{\sE^{\ast}(U_{\alpha})}$ and hence it has cofibrant cokernel (because we assumed $\sE^{\ast}$ to be bounded above) and it is degreewise split injective, hence by~\cite[Lemma 2.3.6]{Hov99} it is a cofibration of differential graded $A_{\alpha}$-modules as required.

In order to show that $C^{\ast}_{\pal}(\sE^{\ast})$ is a cofibrant replacement of $\Psi\sE^{\ast}$ in $\Mod(A_{\pal})$, we are only left with the proof of the existence of a trivial fibration $C^{\ast}_{\pal}(\sE^{\ast})\to \Psi\sE^{\ast}$ of $A_{\pal}$-modules. To this aim, it is sufficient to consider the natural projections $C^{\ast}_{\alpha}(\sE^{\ast})\to H^0\left( C^{\ast}_{\alpha}(\sE^{\ast}) \right) = \sE(U_{\alpha})$ for every $\alpha~\in~\sN$, which by functoriality give the desired map of $A_{\pal}$-modules. Moreover, again by~\cite[Theorem 3.9]{FM} the morphism $C^{\ast}_{\pal}(\sE^{\ast})\to \Psi\sE^{\ast}$ is a trivial fibration of $A_{\pal}$-modules being pointwise a surjective quasi-isomorphism.
We conclude that the derived endomorphisms $\RHom_X(\sE^{\ast},\sE^{\ast})$ are represented by the DG-Lie algebra $\Hom^{\ast}_{A_{\pal}}\left(C^{\ast}_{\pal}(\sE^{\ast}),C^{\ast}_{\pal}(\sE^{\ast})\right)$, cf.~\cite[Theorem 6.4]{FM}.

\bigskip

\subsection{A locally free $\Aut_X(\sF)$-equivariant resolution for $\sF$}
\label{subsec.locfreeres}

Throughout this subsection $\sF$ will be a fixed coherent sheaf of finite projective dimension on $X$. Moreover, we shall denote by $\Aut_{X}(\sF)$ the group of automorphisms of $\sF$.

Recall that for a given  global section $s\in H^0(X,\sF)$ there exists a unique map of $\Oh_X$-modules $\varphi\colon\Oh_X\to\sF$ defined by $f\mapsto fs$. Notice that $s=\varphi(1)$. Under our hypothesis on $X$ and $\sF$ Serre's theorem applies, see e.g.~\cite[Theorem~5.17]{Har}, so that for $n\in\N$ sufficiently large the sheaf $\sF(n)$ is generated by global sections, i.e. the map $H^0(X,\sF(n)) \otimes \Oh_X  \to \sF(n)$ is surjective, where $H^0(X,\sF(n))$ has finite dimension. Hence, since tensoring by $\Oh_X(-n)$ is an exact functor, we obtain a surjective morphism
\[ \varphi\colon H^0(X,\sF(n)) \otimes \Oh_X(-n) \to \sF \]
of coherent $\Oh_X$-modules, which stays surjective when computed on any affine $U\subseteq X$:
\[ \varphi_U\colon H^0(X,\sF(n)) \otimes \Oh_X(-n)(U) \to \sF(U) \to 0 \; ; 	\qquad \qquad 	 \sum_i s_i\otimes f_i \mapsto \sum_i s_i\vert_U \cdot f_i \; . \]

In particular, we have an injective homomorphism of groups $\Aut_{X}(\sF)\to \GL(H^0(X,\sF(n)))$ together with a short exact sequence of coherent sheaves
\[ 0\to \sG\to H^0(X,\sF(n))\otimes \Oh_X(-n)\xrightarrow{\varphi} \sF\to 0\]
and then $\Aut_{X}(\sF)$ is the stabilizer of $\sG$ under the action of $\GL(H^0(X,\sF(n)))$ on the sheaf 
$H^0(X,\sF(n))\otimes \Oh(-n)$.
Notice that the action is trivial on $\Oh_X(-n)(U)$ since each element of $\Aut_{X}(\sF)$ is $\Oh_X$-linear and  
$\varphi_U$ is $\Aut_{X}(\sF)$-equivariant.

\begin{remark}
Notice that the above construction is clearly functorial in the sense that if $\alpha\leq\beta$ in the nerve $\sN$, then we have a commutative square:
\[ \xymatrix{ 	H^0(X,\sF(n))\otimes \Oh(-n)(U_{\alpha})\ar@{->}[d] \ar@{->}[r] & \sF(U_{\alpha})\ar@{->}[d] \\
H^0(X,\sF(n))\otimes \Oh(-n)(U_{\beta}) \ar@{->}[r] & \sF(U_{\beta}) 	} \]
where the horizontal arrows are surjective and the vertical arrows are the restriction maps.
\end{remark}

%\begin{lemma} In the above notation, for every $m\in \Z$, the action of $\Aut_{X}(\sF)$ on $H^0(X,\sF(m))$ is rational. Moroever, for every subgroup $G\subseteq \Aut_{X}(\sF)$ there is a stationary chain of finite sets 
%\[ \cdots\subseteq S_G(H^0(X,\sF(l)))\subseteq S_G(H^0(X,\sF(l+1)))\subseteq\cdots\;.\]
%\begin{proof}
%If $X$ has dimension $k$, there exists $k+1$ hyperplane sections with empty intersection, giving a 
%short exact sequence of locally free sheaves
%\[ 0\to \Oh_X\to \Oh_X(1)^{\oplus k+1}\to \sE\to 0\,.\]
%Tensoring by $\sF(l)$ we get $0\to \sF(l)\to \sF(l+1)^{\oplus k+1}$ and then 
%an inclusion   
%\[ H^0(X,\sF(n))\subseteq H^0(X,\sF(n+1))^{\oplus k+1}\]
%that implies $S_G(H^0(X,\sF(n)))\subseteq S_G(H^0(X,\sF(n+1)))$.
%Concerning the rationality of the action, the same argument implies that it is sufficient to prove the existence of an integer $m$ such that 
%$H^0(X,\sF(l))$ is a rational representation of $\Aut_{X}(\sF)$ for every $l\ge m$.
%
%We have already seen that for $n\in\N$ sufficiently large there is a short exact sequence of coherent sheaves
%\[ 0\to \sG\to H^0(X,\sF(n))\otimes \Oh(-n)\to \sF\to 0\,.\]
%Similarly there exists a $m\in \Z$ such that $H^1(X,\sG(l))=0$ for every $l\ge m$, and hence a 
%short exact sequence of representations
%\[ 0\to H^0(X,\sG(l))\to H^0(X,\sF(n))\otimes \Gamma(X,\Oh(l-n))\to H^0(X,\sF(l))\to 0\]
%and this implies that $H^0(X,\sF(l))$ is a rational representation and 
%\[S_G(H^0(X,\sF(l)))\subseteq S_G(H^0(X,\sF(n))),\quad S_G(H^0(X,\sG(l)))\subseteq S_G(H^0(X,\sF(n))),\] 
%for every  $l\ge m$.
%\end{proof}
%\end{lemma}

\begin{lemma}\label{lemma.coherentRFS}
Let $\sF$ be a coherent sheaf on a projective $\K$-scheme $X$. Then for every open subset $U\subset X$, the space of sections $\Gamma(U,\sF)$ is a rational representation of $\Aut_X(\sF)$ that is finitely supported for every subgroup $G\subseteq\Aut_X(\sF)$.
\begin{proof}
If $U=\bigcup U_i$ is a finite open affine cover we have 
$\Gamma(U,\sF)\subset \oplus_i \Gamma(U_i,\sF)$ and then by Remark~\ref{rmk.subobjectquotient} it is not restrictive to prove the statement assuming $U$ to be an open affine subset. 
We have already proved that  there exists $n\in\N$ sufficiently large such that 
for every affine open subset $U\subset X$ there exists a surjective equivariant map 
\[ H^0(X,\sF(n))\otimes \Gamma(U,\Oh(-n))\to \Gamma(U,\sF)\,.\]
The conclusion follows by Remark~\ref{rmk.subobjectquotient}.
\end{proof}
\end{lemma}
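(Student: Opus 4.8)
The plan is to reduce the statement to the case of an affine $U$ and then to realise $\Gamma(U,\sF)$ as an equivariant quotient of a representation of the shape $V\otimes W$, with $V$ finite-dimensional rational and $W$ trivial, for which rationality and finite support are already guaranteed by Remark~\ref{rmk.subobjectquotient}(3). First I would perform the reduction: choosing a finite open affine cover $U=\bigcup_i U_i$ (possible since $X$ is Noetherian), the sheaf axiom yields an injection $\Gamma(U,\sF)\hookrightarrow\bigoplus_i\Gamma(U_i,\sF)$ given by the restriction maps, and this injection is $\Aut_X(\sF)$-equivariant because every automorphism of $\sF$ commutes with restriction. Using the elementary properties of the support recorded after the definition of $S_G$ --- namely $S_G(V\oplus W)=S_G(V)\cup S_G(W)$ and $S_G(\Gamma(U,\sF))\subseteq S_G(\bigoplus_i\Gamma(U_i,\sF))$ for the invariant subspace $\Gamma(U,\sF)$ --- together with the fact from Remark~\ref{rmk.subobjectquotient}(2) that subrepresentations of rational representations are rational, one sees that it suffices to treat the affine case.

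For $U$ affine I would invoke the surjective equivariant morphism \[ H^0(X,\sF(n))\otimes\Gamma(U,\Oh(-n))\longrightarrow\Gamma(U,\sF) \] constructed earlier in this subsection for $n$ large. Here $V:=H^0(X,\sF(n))$ is finite-dimensional by Serre's theorem and carries a rational $\Aut_X(\sF)$-action through the embedding $\Aut_X(\sF)\hookrightarrow\GL(H^0(X,\sF(n)))$, whereas $W:=\Gamma(U,\Oh(-n))$ carries the trivial action, since each automorphism of $\sF$ is $\Oh_X$-linear and acts trivially on $\Oh_X(-n)$. Hence $V\otimes W$ is rational and finitely supported for every subgroup $G$ by Remark~\ref{rmk.subobjectquotient}(3). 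As the displayed map is equivariant and surjective, $\Gamma(U,\sF)$ is the quotient of $V\otimes W$ by an invariant subspace; rationality passes to quotients by Remark~\ref{rmk.subobjectquotient}(2), and finite support passes because $S_G$ of a quotient is contained in $S_G$ of the total space. This closes the affine case, and hence the lemma.

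The argument is short, so the interesting point is conceptual rather than technical: the only genuine subtlety is that rationality and finite support of infinite-dimensional representations are fragile and need not survive arbitrary functorial constructions, as the dual-representation example in the introduction shows. The construction sidesteps this by confining the entire $\Aut_X(\sF)$-action to the finite-dimensional factor $V$ and pushing all the infinite-dimensionality into the trivial factor $W$; once this separation is achieved, each remaining step is a formal application of the inheritance properties for subobjects, quotients and finite direct sums collected in Remark~\ref{rmk.subobjectquotient} and the surrounding discussion of $S_G$. The one non-formal input is the finite-dimensionality of $H^0(X,\sF(n))$, which is precisely Serre's finiteness theorem, so I expect no real obstacle beyond correctly checking equivariance of the reduction map and of the presenting surjection.
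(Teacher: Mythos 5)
Your proposal is correct and follows essentially the same route as the paper's own proof: reduce to the affine case via the equivariant embedding $\Gamma(U,\sF)\subset\bigoplus_i\Gamma(U_i,\sF)$, then present $\Gamma(U,\sF)$ as an equivariant quotient of $H^0(X,\sF(n))\otimes\Gamma(U,\Oh(-n))$ and conclude by Remark~\ref{rmk.subobjectquotient}. The only difference is that you spell out the inheritance properties of $S_G$ and the equivariance checks that the paper leaves implicit.
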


\begin{proposition}\label{prop.resolution}
Let $X$ be a projective $\K$-scheme, and let $\sF$ be a coherent sheaf of finite projective dimension on $X$. Then there exists a finite locally free $\Aut_{X}(\sF)$-equivariant resolution $\sE^{\ast}\to \sF$ such that for any open subset $U\subseteq X$ the complex $\Gamma(U,\sE^{\ast})$ is a degreewise rational representation that is finitely supported for every subgroup $G\subseteq \Aut_{X}(\sF)$.
\begin{proof}
Choose $n$ sufficiently large giving a short exact sequence
\[ 0\to \sF_1 \to H^0(X,\sF(n)) \otimes \Oh_X(-n) \xrightarrow{\varphi} \sF \to 0 \]
where each map is $\Aut_{X}(\sF)$-equivariant.
Since $\sF_1$ is a coherent sheaf, we can reproduce the same argument and by the hypothesis on the projective dimension of $\sF$ we obtain a resolution of $\sF$ of the form
\begin{multline*}
\quad 0\to \sF_{k} \to H^0(X,\sF_{k-1}(n_{k-1})) \otimes \Oh_X(-n_{k-1}) \to \cdots\\
\cdots \to H^0(X,\sF(n_0)) \otimes \Oh_X(-n_0) \to \sF \to 0\quad\end{multline*}
where $\sF_k$ is locally free and each map is $\Aut_{X}(\sF)$-equivariant.  To conclude, applying the functor $\Gamma(U,-)$ to the above resolution we obtain a complex of finitely supported representations because of Lemma~\ref{lemma.coherentRFS}.
\end{proof}
\end{proposition}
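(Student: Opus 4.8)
The plan is to build the resolution by iterating the $\Aut_X(\sF)$-equivariant Serre surjection set up in the previous subsection, and then to read off rationality and finite support of its sections from the explicit shape of the terms, invoking Remark~\ref{rmk.subobjectquotient}(3) and Lemma~\ref{lemma.coherentRFS}.

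First I would produce the top of the resolution. For $n_0\gg 0$ Serre's theorem together with the evaluation construction recalled above gives the $\Aut_X(\sF)$-equivariant short exact sequence
\[ 0\to \sF_1\to H^0(X,\sF(n_0))\otimes\Oh_X(-n_0)\xrightarrow{\varphi}\sF\to 0, \]
whose kernel $\sF_1$ is again coherent and, being the kernel of an equivariant map, inherits an action of $\Aut_X(\sF)$. The point I would emphasise is that $\varphi$ is the canonical evaluation map, so the whole construction is functorial and equivariant for $\Aut_X(\sF)$ itself, not merely for $\Aut_X(\sF_1)$.

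Next I would iterate. Applying the same construction to the coherent $\Aut_X(\sF)$-sheaf $\sF_1$ yields an equivariant surjection $H^0(X,\sF_1(n_1))\otimes\Oh_X(-n_1)\to\sF_1$ with coherent kernel $\sF_2$, and so on. Since $\sF$ has finite projective dimension, after finitely many steps the kernel $\sF_k$ is locally free; truncating there produces a finite locally free $\Aut_X(\sF)$-equivariant resolution $\sE^{\ast}\to\sF$ in which every intermediate term has the form $H^0(X,\sF_{i-1}(n_{i-1}))\otimes\Oh_X(-n_{i-1})$.

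Finally I would take sections. For each such term and any open $U$ one has $\Gamma(U,\,H^0(X,\sF_{i-1}(n_{i-1}))\otimes\Oh_X(-n_{i-1}))=H^0(X,\sF_{i-1}(n_{i-1}))\otimes\Gamma(U,\Oh_X(-n_{i-1}))$, where the first factor is finite dimensional (coherent cohomology on a projective scheme) with rational $\Aut_X(\sF)$-action and $\Aut_X(\sF)$ acts trivially on the line bundle $\Oh_X(-n_{i-1})$; thus it is a finite-dimensional rational representation tensored with a trivial one, hence rational and finitely supported for every subgroup $G\subseteq\Aut_X(\sF)$ by Remark~\ref{rmk.subobjectquotient}(3). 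For the remaining locally free term $\sF_k$ the same argument as in Lemma~\ref{lemma.coherentRFS} — one further equivariant evaluation surjection — exhibits $\Gamma(U,\sF_k)$ as a quotient of such a tensor product, so it too is rational and finitely supported. I expect the only genuine obstacle to be the equivariance bookkeeping: checking that one fixed group $\Aut_X(\sF)$ acts compatibly through the entire resolution, which rests on the functoriality of the evaluation surjection. Once that is in place the finiteness statements are immediate from Remark~\ref{rmk.subobjectquotient}(3).
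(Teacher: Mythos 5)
Your proposal is correct and follows essentially the same route as the paper's proof: iterate the $\Aut_X(\sF)$-equivariant Serre surjection, stop at a locally free kernel using the finite projective dimension hypothesis, and deduce rationality and finite support after applying $\Gamma(U,-)$. The only difference is bookkeeping at the last step: the paper disposes of all terms at once by citing Lemma~\ref{lemma.coherentRFS}, whereas you treat the middle terms directly via Remark~\ref{rmk.subobjectquotient}(3) and reserve the Lemma's evaluation-surjection argument for the locally free tail $\sF_k$ --- an equivalent, slightly more explicit, version of the same argument.
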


Notice that the associated infinitesimal action of $\Aut_{X}(\sF)$ on $\sE^*$ gives a morphism of Lie algebras
$\Ext^0_X(\sF,\sF)\to Z^0(\Hom_X^*(\sE^*,\sE^*))$; in other words, every endomorphism of $\sF$ lifts functorially to an endomorphism of the complex of sheaves $\sE^*$.

\begin{lemma}\label{lemma.HomClosure}
Let $\sF$ be a coherent sheaf and $\sE$ a locally free sheaf on a projective scheme $X$ over the field $\K$. Then for every open affine subset $U\subset X$, the $\Oh_X(U)$-module 
\[ \Hom_U(\sF,\sE)=\Hom_{\Oh_X(U)}(\sF(U),\sE(U))\]
is a rational representation of $\Aut_{X}(\sF)\times \Aut_{X}(\sE)$ that is finitely supported with respect any subgroup.
\begin{proof}
We have already seen that there exists a sufficiently large $n\in\N$ such that the sheaves $\sF(n)$ and 
$\sE^{\vee}(n)=\HOM_X(\sE,\Oh_X)(n)$ are generated  by global sections, i.e. there exist surjective morphisms of sheaves
\[  H^0(X,\sF(n))\otimes \Oh(-n)\to \sF,\qquad
H^0(X,\sE^{\vee}(n))\otimes \Oh(-n)\to \sE^\vee\to 0\,.\]
Dualizing the second sequence we get an injective morphism of $\Oh_X$-modules 
\[  0\to \sE\to H^0(X,\sE^{\vee}(n))^\vee\otimes \Oh(n)\]
and then for every open affine subset $U\subseteq X$ we get two exact sequences
\[ \Gamma(X,\sF(n))\otimes \Gamma(U,\Oh(-n))\to \sF(U)\to 0,\qquad
0\to\sE(U)\subset H^0(X,\sE^{\vee}(n))^\vee\otimes \Gamma(U,\Oh(n))\,.\]
To conclude it is sufficient to observe that $\Hom_{\Oh_X(U)}(\sF(U),\sE(U))$ is a subrepresentation of $\Hom_{\K}(\sF(U),\sE(U))$ which in turn is a subrepresentation of the rational representation
\[ \begin{split}
&\Hom_{\K}\left(\Gamma(X,\sF(n)))\otimes \Gamma(U,\Oh(-n)), \vphantom{I^1_1}
\Hom_X(\sE,\Oh(n))^\vee\otimes \Gamma(U,\Oh(n))\right)\\
&\qquad=\Hom_{\K}(\Gamma(X,\sF(n)),H^0(X,\sE^{\vee}(n))^\vee)\otimes \Hom_{\K}(\Gamma(U,\Oh(-n)),\Gamma(U,\Oh(n)))\,.
\end{split}\]
\end{proof}
\end{lemma}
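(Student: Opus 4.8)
The plan is to sandwich $\Hom_{\Oh_X(U)}(\sF(U),\sE(U))$ inside a representation of the controllable shape ``(finite-dimensional rational)\, $\otimes$ (trivial)'', and then to conclude by the closure properties recorded in Remark~\ref{rmk.subobjectquotient}. The asymmetry to exploit is that $\sF$ receives an equivariant surjection from a globally generated sheaf, whereas $\sE$, being locally free, admits an equivariant \emph{embedding} into one after dualizing; this mixed variance is exactly what lets the contravariant bifunctor $\Hom(-,-)$ produce something with bounded support.

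First I would invoke Serre's theorem to fix $n\gg 0$ so that both $\sF(n)$ and $\sE^{\vee}(n)=\HOM_X(\sE,\Oh_X)(n)$ are globally generated, giving equivariant surjections $H^0(X,\sF(n))\otimes\Oh(-n)\to\sF$ and $H^0(X,\sE^{\vee}(n))\otimes\Oh(-n)\to\sE^{\vee}$. Since $\sE$ is locally free we have $\sE^{\vee\vee}=\sE$, so dualizing the second surjection yields an equivariant embedding $\sE\hookrightarrow H^0(X,\sE^{\vee}(n))^{\vee}\otimes\Oh(n)$. Taking sections over the affine open $U$ turns these into a surjection $P\twoheadrightarrow\sF(U)$ and an injection $\sE(U)\hookrightarrow Q$, where $P=\Gamma(X,\sF(n))\otimes\Gamma(U,\Oh(-n))$ and $Q=H^0(X,\sE^{\vee}(n))^{\vee}\otimes\Gamma(U,\Oh(n))$. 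Crucially, as in the construction of the equivariant resolution, the $\Aut_X(\sF)$- and $\Aut_X(\sE)$-actions are trivial on the factors $\Gamma(U,\Oh(\pm n))$.

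Next, precomposition with $P\twoheadrightarrow\sF(U)$ and postcomposition with $\sE(U)\hookrightarrow Q$ embed $\Hom_{\Oh_X(U)}(\sF(U),\sE(U))\subseteq\Hom_{\K}(\sF(U),\sE(U))$ equivariantly into $\Hom_{\K}(P,Q)$, the $\Aut_X(\sF)$-action being carried by $P$ and the $\Aut_X(\sE)$-action by $Q$. Because $\Gamma(X,\sF(n))$ and $H^0(X,\sE^{\vee}(n))^{\vee}$ are finite-dimensional (cohomology of coherent sheaves on a projective scheme), one factors
\[ \Hom_{\K}(P,Q)\cong\Hom_{\K}\bigl(\Gamma(X,\sF(n)),H^0(X,\sE^{\vee}(n))^{\vee}\bigr)\otimes\Hom_{\K}\bigl(\Gamma(U,\Oh(-n)),\Gamma(U,\Oh(n))\bigr). \]
The first factor is finite-dimensional and carries a rational algebraic $\Aut_X(\sF)\times\Aut_X(\sE)$-action; the second is a trivial representation. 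By Remark~\ref{rmk.subobjectquotient}(3) the tensor product is rational and finitely supported for every subgroup, and by Remark~\ref{rmk.subobjectquotient}(2) together with the monotonicity $S_G(U)\subseteq S_G(V)$ of the support under passage to subrepresentations, the same holds for the subrepresentation $\Hom_{\Oh_X(U)}(\sF(U),\sE(U))$.

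The step requiring the most care is the factorization of $\Hom_{\K}(P,Q)$: the natural map $\Hom_{\K}(A,C)\otimes\Hom_{\K}(B,D)\to\Hom_{\K}(A\otimes B,C\otimes D)$ is an isomorphism precisely because $A=\Gamma(X,\sF(n))$ and $C=H^0(X,\sE^{\vee}(n))^{\vee}$ are finite-dimensional, and one must verify that this identification is equivariant with the group action concentrated on the slots $A$ and $C$. This is exactly where the finiteness of the global sections of $\sF(n)$ and the local freeness of $\sE$ enter essentially; everything else is a formal application of the representation-theoretic lemmas proved above.
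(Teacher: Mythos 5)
Your proof is correct and follows essentially the same route as the paper's: Serre vanishing to get the equivariant surjection onto $\sF$ and (via $\sE^{\vee\vee}\cong\sE$) the equivariant embedding of $\sE$, then the sandwich $\Hom_{\Oh_X(U)}(\sF(U),\sE(U))\subseteq\Hom_{\K}(\sF(U),\sE(U))\hookrightarrow\Hom_{\K}(P,Q)\cong\Hom_{\K}\bigl(\Gamma(X,\sF(n)),H^0(X,\sE^{\vee}(n))^{\vee}\bigr)\otimes\Hom_{\K}\bigl(\Gamma(U,\Oh(-n)),\Gamma(U,\Oh(n))\bigr)$, concluded by the closure properties of Remark~\ref{rmk.subobjectquotient}. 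Your explicit justification of the factorization of $\Hom_{\K}(P,Q)$ via finite-dimensionality of the two rational factors is a point the paper leaves implicit, but it is the same argument.
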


\begin{theorem}\label{theorem:goodmodel}
Let $X$ be a projective $\K$-scheme, $\sF$ a coherent sheaf of finite projective dimension on $X$. Consider a resolution $\sE^{\ast}\to \sF$ as in Proposition~\ref{prop.resolution}. Then the derived endomorphisms of $\sF$ are represented by the DG-Lie algebra $\Hom^{\ast}_{A_{\pal}}(C_{\pal}^{\ast}(\sE^{\ast}),C_{\pal}^{\ast}(\sE^{\ast}))$. Moreover, each degree of such complex is a rational representation with respect to the inherited $\Aut_{X}(\sF)$-action, and it is finitely supported for every subgroup.
\begin{proof}
We are only left with the proof of the last part of the statement, since it was proven in Section~\ref{section.derivedEnd} that $\Hom^{\ast}_{A_{\pal}}(C_{\pal}^{\ast}(\sE^{\ast}),C_{\pal}^{\ast}(\sE^{\ast}))$ represents $\RHom(\sF,\sF)$. First notice that for every $k\in\Z$ we have
\[ \Hom^{k}_{A_{\pal}}(C_{\pal}^{\ast}(\sE^{\ast}),C_{\pal}^{\ast}(\sE^{\ast})) \subseteq \prod_{i\in\Z} \Hom_{\K}(C_{\pal}^{i}(\sE^{\ast}),C_{\pal}^{i+k}(\sE^{\ast})) \]
and that by Remark~\ref{rmk.subobjectquotient} and Lemma~\ref{lemma.HomClosure} rational and finitely supported representations are closed under taking subobjects, and homomorphisms. Now recall that $\sE^{\ast}$ is bounded by assumption, and since the scheme is assumed to be projective then the covering can be chosen to be finite, so that $C_{\pal}^{\ast}(\sE^{\ast})$ is bounded too. Hence the product above is finite and the statement follows.
\end{proof}
\end{theorem}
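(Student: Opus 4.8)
The plan is to establish the final part of the statement—that each homogeneous component is rational and finitely supported—by reducing the structure of the complex $\Hom^{\ast}_{A_{\pal}}(C_{\pal}^{\ast}(\sE^{\ast}),C_{\pal}^{\ast}(\sE^{\ast}))$ to a finite assembly of pieces already controlled by the preceding lemmas. The representation-theoretic part of the claim is purely formal once we know that we are dealing with a \emph{finite} product of $\Hom_{\K}$-spaces between components of the form $\Gamma(U_\beta,\sE^i)$, because Remark~\ref{rmk.subobjectquotient} and Lemma~\ref{lemma.HomClosure} together guarantee that rational and finitely supported representations are stable under the operations that build up the complex.

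First I would unwind the definition of the $A_{\pal}$-module $\Hom^k_{A_{\pal}}$. By the construction in Subsection~\ref{section.derivedEnd}, a degree-$k$ morphism of $A_{\pal}$-modules is a compatible family of maps, and is therefore a \emph{subobject} of the product over all degrees $i$ and all $\alpha\in\sN$ of the $\K$-linear Hom-spaces between the relevant graded pieces of $C^{\ast}_{\pal}(\sE^{\ast})$. Since $C^{\ast}_{\alpha}(\sE^{\ast}) = \check{C}^{\ast}(\Delta_{\alpha})\otimes_{\Z}\sE^{\ast}(U_{\alpha})$ and $\check{C}^{\ast}(\Delta_{\alpha})$ is a finitely generated free $\Z$-complex on which $\Aut_X(\sF)$ acts trivially, each graded piece of $C^{\ast}_{\alpha}(\sE^{\ast})$ is a finite direct sum of copies of the sections $\Gamma(U_\alpha,\sE^i)$. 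The key observation is that these section spaces are exactly the objects handled by Lemma~\ref{lemma.HomClosure}, which tells us that $\Hom_{\Oh_X(U)}(\sF(U),\sE(U))$, and by the same argument $\Hom_{\K}(\Gamma(U,\sE^i),\Gamma(U,\sE^{i+k}))$, is rational and finitely supported for every subgroup.

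Next I would invoke the finiteness hypotheses to pass from a potentially infinite product to a finite direct sum, which is the crucial quantitative point. Because $\sE^{\ast}$ is a \emph{bounded} complex of locally free sheaves and the scheme $X$ is projective—hence admits a \emph{finite} affine cover, so that the nerve $\sN$ is finite—the complex $C^{\ast}_{\pal}(\sE^{\ast})$ is bounded in the cochain degree $\ast$ as well. Therefore the product $\prod_{i\in\Z}\Hom_{\K}(C^i_{\pal}(\sE^{\ast}),C^{i+k}_{\pal}(\sE^{\ast}))$ has only finitely many nonzero factors and is in fact a finite direct sum of spaces of the form treated in Lemma~\ref{lemma.HomClosure}. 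Applying the closure properties recorded in Remark~\ref{rmk.subobjectquotient}—stability of rationality and finite support under finite direct sums and under passage to invariant subobjects—we conclude that $\Hom^k_{A_{\pal}}(C^{\ast}_{\pal}(\sE^{\ast}),C^{\ast}_{\pal}(\sE^{\ast}))$ is rational and finitely supported for every subgroup $G\subseteq\Aut_X(\sF)$.

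I expect the main obstacle to be precisely the finiteness bookkeeping that makes the product finite, rather than anything representation-theoretic. The subtlety flagged in the introduction is that $\Hom$ does not preserve rationality for infinite-dimensional representations unless one controls the number of isotypic components, so the whole argument hinges on never leaving the class of finitely supported representations; the boundedness of $\sE^{\ast}$ and the finiteness of the cover are exactly what keep every intermediate object inside this class. Once one is confident that only finitely many factors survive, the remaining verification is a routine appeal to the closure properties already established, and no genuinely new estimate is required.
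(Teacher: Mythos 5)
Your proposal is correct and follows essentially the same route as the paper's proof: you realize $\Hom^{k}_{A_{\pal}}(C_{\pal}^{\ast}(\sE^{\ast}),C_{\pal}^{\ast}(\sE^{\ast}))$ as a subobject of a product of $\K$-linear Hom spaces between the graded pieces, invoke the closure properties of Remark~\ref{rmk.subobjectquotient} together with Lemma~\ref{lemma.HomClosure}, and use boundedness of $\sE^{\ast}$ plus finiteness of the affine cover to reduce to a finite product. Your extra step of unwinding $C^{\ast}_{\alpha}(\sE^{\ast})=\check{C}^{\ast}(\Delta_{\alpha})\otimes_{\Z}\sE^{\ast}(U_{\alpha})$ into finite sums of section spaces $\Gamma(U_{\alpha},\sE^{i})$ is a harmless (indeed clarifying) elaboration of the same argument.
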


\noindent\textbf{Proof of Theorem~\ref{thm.maintheorem3}.}
Finally, combining the above results we can easily prove Theorem~\ref{thm.maintheorem3}.
To this aim, by Theorem~\ref{theorem:goodmodel} it is sufficient to show that the DG-Lie algebra 
\[L=\Hom^{\ast}_{A_{\pal}}(C_{\pal}^{\ast}(\sE^{\ast}),C_{\pal}^{\ast}(\sE^{\ast}))\] 
satisfies  (2) and (3) of Theorem~\ref{thm.maintheorem3}.
We already noticed that  endomorphisms of $\sF$ lift functorially to endomorphisms of the complex $\sE^*$
and hence of the $A_{\pal}$-module $\Psi\sE^*$. 
Then it is clear from the explicit construction of the cofibrant replacement $C_{\pal}^{\ast}(\sE^{\ast})\to \Psi\sE^*$ 
that  every endomorphism of $\Psi\sE^*$ lifts canonically to an endomorphism  of $C_{\pal}^{\ast}(\sE^{\ast})$.
In conclusion we have an injective morphism of Lie algebras 
\[ \Hom_X(\sF,\sF)=\Ext^0_X(\sF,\sF)\to Z^0(\Hom^{\ast}_{A_{\pal}}(C_{\pal}^{\ast}(\sE^{\ast}),C_{\pal}^{\ast}(\sE^{\ast}))\]
and we can choose its image as the required $\mathfrak{g}$.

\section{Review of $L_\infty[1]$ algebras and formality}	
\subsection{$L_\infty[1]$ algebras} In this subsection we review some basic facts and notations concerning $L_\infty[1]$ algebras, following the paper \cite{Man4}, to which we refer for more details.

Given a graded $\K$-vector space $V=\oplus_{i\in\Z} V^i$, we denote by $S^c(V)=\oplus_{n\geq0}V^{\odot n}$ the symmetric coalgebra over $V$ (see \cite[\S 4]{Man4}). Denoting by $p:S^c(V)\to V$ the natural projection, corestriction induces an isomorphism of graded spaces 
\[\Coder(S^c(V))\to\Hom(S^c(V),V)=\prod_{n\geq0}\Hom(V^{\odot n},V)\colon Q\to p\circ Q= (q_0,q_1,\ldots,q_n,\ldots) \]
(see \cite[Proposition 4.2]{Man4} for an explicit description of the inverse), where $\Coder(S^c(V))$ is the graded Lie algebra of coderivations of $S^c(V)$. We call the components $q_n:V^{\odot n}\to V$ of $Q\in\Coder(S^c(V))$ under the corestriction isomorphisms the \emph{Taylor coefficients} of $Q$, and $q_0$, $q_1$ respectively the \emph{constant} and the \emph{linear part} of $Q$. Via the above isomorphism, the natural commutator bracket on $\Coder(S^c(V))$ induces a Lie bracket on $\Hom(S^c(V),V)$, which is called the \emph{Nijenhuis-Richardson bracket} and denoted by $[-,-]_{NR}$.

\begin{definition} An $L_\infty[1]$ algebra structure on $V$ is a degree $+1$ coderivation
	\[ Q\in\Coder^1(S^c(V)),\qquad q_0=0,\qquad Q\circ Q=0,  \]
	with vanishing constant part and squaring to zero. In particular, the linear part $q_1:V\to V$ squares to zero: the complex $(V,q_1)$ is called the \emph{tangent complex} of the $L_\infty[1]$ algebra $(V,Q)$. The $L_\infty[1]$ algebra $(V,Q)$ is said to be \emph{minimal} if $q_1=0$. 
	
	Given a second $L_\infty[1]$-algebra $(W,R)$, an $L_\infty[1]$ morphism 
	$F\colon V\dashrightarrow W$ from $(V,Q)$ to $(W,R)$ is a morphism of (counital, coaugmented) DG coalgebras $F\colon (S^c(V),Q)\to (S^c(W),R)$. As a morphism of graded (coaugmented, i.e., $F(1)=1$) coalgebras, $F$ is completely determined by its corestriction
	\[ p\circ F = (0,f_1,\ldots,f_n,\ldots)\in\Hom^0(S^c(V),W)=\prod_{n\geq0}\Hom^0(V^{\odot n},W). \]
	As for coderivations, we call the $f_n:V^{\odot n}\to W$ the \emph{Taylor coefficients} of $F$, and $f_1:V\to W$ the \emph{linear part} of $F$. The compatibility with the codifferentials translates into a bunch of identities involving the Taylor coefficients $f_i,q_j,r_k$: we won't need to write these down explicitly. In particular, $f_1:(V,q_1)\to (W,r_1)$ is a morphism between the tangent complexes: we say that $F$ is a \emph{weak equivalence}, if $f_1$ is a quasi-isomorphism. Finally, an $L_\infty[1]$ morphism $F\colon V\dashrightarrow W$ is \emph{strict} if the only non-vanishing Taylor coefficient is the linear one $f_1$.	\end{definition}
\begin{remark} The category of $L_\infty[1]$ algebras is equivalent, up to a shift, to the category of $L_\infty$ algebras, or strong homotopy Lie algebras, see \cite{bibid}, and in particular contains the usual category of DG Lie algebras as a subcategory. More precisely there is a bijective correspondence between $L_\infty[1]$ algebra structures on a graded space $V$ and $L_\infty$ algebra structures on its suspension $V[-1]$: under this correspondence, DG Lie algebra structures correspond to those $L_\infty[1]$ algebra structures $Q$ such that $q_n=0$ for $n\ge3$. 
\end{remark}

\begin{remark}\label{rem: r2} Given an $L_\infty[1]$ algebra $(V,Q)$, the relation $Q\circ Q$ translates into a bunch of relations involving the Taylor coefficients $q_n$, which in particular imply that the quadratic bracket $q_2$ descends to a quadratic bracket 
$r_2\colon H(V,q_1)^{\odot 2}\to H(V,q_1)$ on the tangent cohomology, and the latter satisfies $[r_2,r_2]_{NR}=0$ (in other words, it corresponds to a graded Lie algebra structure on $H(V,q_1)[-1]$).
\end{remark}

A very important and useful fact about $L_\infty[1]$ algebra structures is that they can be transferred along contractions. For a proof of the following result we refer to \cite{HueSta,fuka,BerglundHPT}.
\newcommand{\id}{\operatorname{id}}
\begin{theorem}\label{th: transfer} Let $(V,Q)$ be an $L_\infty[1]$ algebra, and $(f,g,h)$ be a contraction of the tangent complex $(V,q_1)$ onto some complex $(W,r_1)$, i.e., $f:W\to V$ and $g:V\to W$ are DG maps and $h\colon V\to V[-1]$ a contracting homotopy such that the following conditions are satisfied:\[ gf = \id_W,\qquad fg = \id_V + q_1h + hq_1,\qquad gh = h^2 = fh = 0. \]
	Then there is an induced $L_\infty[1]$ structure $R$ on $W$ with linear part $r_1$, together with $L_\infty[1]$ morphisms $F\colon W\dashrightarrow V$ and $G\colon V\dashrightarrow W$ with linear parts $f_1=f$ and $g_1=g$ respectively. The higher Taylor coefficients $r_n,f_n,g_n$, $n\ge2$, are recursively determined by the Taylor coefficients $q_n$, $n\ge2$, and the contraction data $(f,g,h)$ (see e.g. \cite[Theorem 1.9]{descent} for explicit recursive formulas).  
\end{theorem}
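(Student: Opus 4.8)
The plan is to prove the statement by homological perturbation theory, reducing the transfer of the entire $L_\infty[1]$ structure to a single application of the basic perturbation lemma on symmetric coalgebras. The key starting observation is the one recalled just above the theorem: via the corestriction isomorphism $\Coder(S^c(V))\cong\Hom(S^c(V),V)$, an $L_\infty[1]$ structure on $V$ is the same datum as a square-zero degree $+1$ coderivation $Q$ of $S^c(V)$ with vanishing constant term, and an $L_\infty[1]$ morphism is the same as a morphism of the associated (counital, coaugmented) DG coalgebras. Hence I would phrase everything at the level of codifferentials on $S^c(V)$ and $S^c(W)$, where the transfer becomes a perturbation problem.

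First I would split $Q=Q_1+Q'$, where $Q_1$ is the coderivation with single Taylor coefficient $q_1$ (the symmetric-coalgebra differential induced by the tangent differential) and $Q'$ collects the $q_n$ with $n\ge2$. The linear contraction $(f,g,h)$ of $(V,q_1)$ onto $(W,r_1)$ then promotes to a contraction of the linear DG coalgebras $(S^c(V),Q_1)$ onto $(S^c(W),R_1)$, where $F_0=S^c(f)$ and $G_0=S^c(g)$ are the cofree coalgebra extensions and the contracting homotopy $H$ is produced by the symmetric tensor trick: on $V^{\odot n}$ it is the symmetrization of $\sum_{i=0}^{n-1}(fg)^{\odot i}\odot h\odot \id^{\odot(n-1-i)}$. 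Here the side conditions $gh=h^2=fh=0$ are exactly what is needed to verify that $(F_0,G_0,H)$ again satisfies the contraction identities $G_0F_0=\id$, $F_0G_0=\id+Q_1H+HQ_1$, together with $G_0H=H^2=F_0H=0$.

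Next I would treat $Q'$ as a perturbation of $Q_1$ and invoke the basic perturbation lemma. The required local nilpotency, which guarantees convergence of the geometric series $\sum_k(HQ')^k$ defining the perturbed data, follows from the word-length filtration of $S^c(V)$: a coderivation with Taylor coefficient $q_n$ maps $V^{\odot m}\to V^{\odot(m-n+1)}$, so for $n\ge2$ it strictly lowers word length, while $H$ preserves it; hence $HQ'$ is nilpotent on each graded piece. The perturbation lemma then outputs a perturbed differential $R=R_1+(\text{perturbation})$ on $S^c(W)$, automatically squaring to zero, together with perturbed maps $F$, $G$ and a new homotopy forming a contraction of $(S^c(V),Q)$ onto $(S^c(W),R)$. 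Unwinding these geometric-series formulas reproduces precisely the recursive tree expressions of \cite[Theorem 1.9]{descent} for $r_n,f_n,g_n$, and shows that $R$, $F$, $G$ agree in top filtration degree with $R_1$, $F_0$, $G_0$, so that the linear parts are $r_1$, $f_1=f$, $g_1=g$ as claimed; the constant terms $q_0$, and hence all $r_0,f_0,g_0$, vanish throughout since every surviving Taylor coefficient has at least one input.

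The main obstacle, and the only step requiring genuine care rather than formal bookkeeping, is verifying that the output of the perturbation lemma lives in the coalgebra world: that the perturbed $R$ is again a \emph{coderivation} (so that it encodes an honest $L_\infty[1]$ structure, square-zero as noted) and that $F$, $G$ are \emph{morphisms of coalgebras} (so that they are $L_\infty[1]$ morphisms). This is the content of the coalgebra refinement of the basic perturbation lemma: because $Q'$ is a coderivation and $F_0,G_0$ are coalgebra maps, the filtered side conditions are compatible with the coproduct, and comultiplicativity is inherited by the perturbed data. Granting this refinement, as established in \cite{HueSta,fuka,BerglundHPT}, the transferred $(W,R)$ together with the morphisms $F\colon W\dashrightarrow V$ and $G\colon V\dashrightarrow W$ satisfy all the asserted properties, completing the proof.
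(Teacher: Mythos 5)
The paper does not actually prove this theorem: immediately before the statement the authors write that for a proof they refer to \cite{HueSta,fuka,BerglundHPT}, and the statement itself points to \cite[Theorem 1.9]{descent} for the recursive formulas. Your argument — tensor trick to promote the contraction to the symmetric coalgebras, perturbation lemma with respect to the word-length filtration, coalgebra refinement to stay in the $L_\infty[1]$ world — is precisely the homological-perturbation proof contained in those references, so in approach you coincide with the proof the paper has in mind rather than diverging from it. Your filtration argument for local nilpotency of $HQ'$ and the identification of the linear parts $r_1$, $f_1=f$, $g_1=g$ are correct as stated.

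Two caveats on completeness. First, your claim that the side conditions $gh=h^2=fh=0$ are ``exactly what is needed'' for the symmetrized homotopy $H$ on $S^c(V)$ to again satisfy $G_0H=HF_0=H^2=0$ glosses a genuine subtlety: the \emph{unsymmetrized} tensor-trick homotopy on $T^c(V)$ satisfies $H^2=0$ term by term, but after averaging over the symmetric group the cross terms $(\sigma^{-1}H\sigma)(\tau^{-1}H\tau)$ with $\sigma\neq\tau$ are not individually zero; they cancel only in pairs through Koszul signs (for instance $(h\otimes \id)\circ(\id\otimes h)+(\id\otimes h)\circ(h\otimes \id)=0$ because $|h|$ is odd). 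This verification, or an alternative construction of an equivariant side-condition-preserving homotopy, is one of the points where the symmetric case is genuinely harder than the associative one, and it should be carried out or explicitly cited rather than asserted. Second, the step you yourself flag as the main obstacle — that the perturbed differential is again a coderivation and the perturbed maps are again coalgebra morphisms — is the heart of the theorem, and you defer it to \cite{HueSta,fuka,BerglundHPT}. That is legitimate, but it means your write-up is a correct \emph{reduction} to the coalgebra perturbation lemma of the literature rather than a self-contained proof, i.e., it sits at essentially the same level of detail as the paper's own treatment. Note also the alternative, perturbation-free route suggested by the statement itself: take the recursive tree formulas of \cite[Theorem 1.9]{descent} as the definition of $r_n,f_n,g_n$ and verify the $L_\infty[1]$ relations directly by induction on $n$; this trades the coalgebra bookkeeping for a combinatorial induction and has the advantage of producing exactly the explicit formulas the theorem quotes.
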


An immediate and fundamental consequence of the above theorem is the existence (and, with a little more work, uniqueness) of minimal models.

\begin{definition} Given an $L_\infty[1]$ algebra $(V,Q)$, a \emph{minimal model} of $(V,Q)$ is the datum of a minimal $L_\infty[1]$ algebra $(W,R)$ together with a weak equivalence $F:W\dashrightarrow V$ of $L_\infty[1]$ algebras. General structure theory of $L_\infty[1]$ algebras says that minimal models always exist, and are well defined up to  $L_\infty[1]$ isomorphisms: furthermore, two $L_\infty[1]$ algebras are weakly equivalent if and only if they have isomorphic minimal models. 
\end{definition}

\begin{remark}\label{rem:hodge} In order to obtain an explicit minimal model of $(V,Q)$ it is sufficient to choose an abstract \emph{Hodge decomposition} for the complex $(V,q_1)$. By this we mean the choice of a splitting of the sequence of inclusion $B^i(V)\subset Z^i(V)\subset V^i$, or in other words,  of vector space decompositions $Z^i(V)=B^i(V)\oplus H^i$, $V^i=Z^i(V)\oplus W^i=B^i(V)\oplus H^i\oplus W^i$, for all $i\in\Z$. We notice that the differential $q_1$ vanishes on $H:=\oplus_{i\in\Z}H^i$, and it restricts to an isomorphism from $W[-1]=\oplus_{i\in\Z}W^{i-1}$ to $B(V)=\oplus_{i\in\Z}B^i(V)$. For any such a choice, there is a canonically induced contraction $(f,g,h)$ of $(V,q_1)$ onto $(H,0)$: the inclusion $f:H\to V$ and the projection $g\colon V\to H$ are induced by the decomposition $V=B(V)\oplus H\oplus W$, and the contracting homotopy is the composition $h\colon V\twoheadrightarrow B(V)\xrightarrow{(q_1)^{-1}} W[-1]\hookrightarrow V[-1]$. Via homotopy transfer along this contraction, there is an induced minimal $L_\infty[1]$ algebra structure $(H,R)=(H,0,r_2,\ldots,r_n,\ldots)$ on $H\cong H(V,q_1)$, together with quasi-inverses weak equivalences $F\colon H\dashrightarrow V$, $G\colon V\dashrightarrow H$ such that 
$GF$ is the identity on $H$. We notice that the quadratic bracket $r_2$ identifies with the one from Remark~\ref{rem: r2}.
\end{remark}

An $L_{\infty}[1]$ algebra is called homotopy abelian if it is weakly equivalent to a graded vector space, considered as an $L_{\infty}[1]$ algebra with trivial bracket: it is plain that 
an $L_{\infty}[1]$ algebra is homotopy abelian if and only if its minimal model carries the trivial 
$L_{\infty}[1]$ structure.

\begin{lemma}\label{lem.4sette} 
Let $F\colon (V,q_1,\ldots)\dashrightarrow (W,r_1,\ldots)$ be a morphism of 
$L_{\infty}[1]$ algebra with $W$ homotopy abelian. Then there exists a minimal model 
$(H^*(V,q_1),0,s_2,\ldots)$ for $V$ such that the image of every map $s_r$ is contained
in the kernel of $f_1\colon H^*(V,q_1)\to H^*(W,r_1)$.
\end{lemma}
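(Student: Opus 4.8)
The plan is to produce the required minimal model by transporting, through an $L_\infty[1]$ isomorphism with identity linear part, a model obtained by homotopy transfer, arranged so that a \emph{strict} morphism onto (a copy of) the image of $f_1$ survives. First I would fix an abstract Hodge decomposition of the tangent complex $(V,q_1)$ as in Remark~\ref{rem:hodge} and let $(H,R)=(H^*(V,q_1),0,r_2,r_3,\ldots)$ be the minimal $L_\infty[1]$ algebra obtained from $(V,Q)$ by homotopy transfer (Theorem~\ref{th: transfer}), together with its weak equivalence $F_V\colon H\dashrightarrow V$. Since $W$ is homotopy abelian its minimal model carries the trivial structure, so there is a weak equivalence $G_W\colon W\dashrightarrow(\bar H,0)$, where $\bar H=H^*(W,r_1)$ is given the trivial $L_\infty[1]$ structure. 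Composing, I obtain an $L_\infty[1]$ morphism
\[ \Phi:=G_W\circ F\circ F_V\colon (H,R)\dashrightarrow(\bar H,0), \]
whose linear part $\phi_1$ induces on cohomology exactly the map $f_1\colon H^*(V,q_1)\to H^*(W,r_1)$; as source and target coincide with their own cohomologies, $\phi_1$ is identified with $f_1$, and in particular $\ker\phi_1=\ker f_1$.

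Next I would strictify $\Phi$ relative to its image rather than towards all of $\bar H$. Choose a splitting $H=\ker f_1\oplus C$ and let $\rho\colon H\twoheadrightarrow C$ be the projection with kernel $\ker f_1$, so that the desired condition $\Image(s_r)\subseteq\ker f_1$ is equivalent to $\rho\circ s_r=0$. Pick a retraction $\bar\rho\colon\bar H\to C$ with $\bar\rho\circ f_1=\rho$ and set $\Phi'':=\bar\rho\circ\Phi\colon(H,R)\dashrightarrow(C,0)$; this is an $L_\infty[1]$ morphism into the trivial algebra $(C,0)$ whose linear part is the \emph{surjection} $\rho$. The heart of the argument is then to build an $L_\infty[1]$ isomorphism $G\colon(H,S)\to(H,R)$ with $g_1=\Id_H$, where $(H,S)=(H,0,s_2,s_3,\ldots)$ denotes the transported (still minimal) structure, in such a way that $\Phi''\circ G$ is strict. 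I would construct $G$ by the usual order-by-order procedure: assuming the Taylor coefficients of $\Phi''\circ G$ already vanish in arities $2,\ldots,k-1$, the arity-$k$ coefficient takes the form $\rho\circ g_k + B_k$, where $B_k\colon H^{\odot k}\to C$ is determined by the previously chosen $g_2,\ldots,g_{k-1}$ and by the Taylor coefficients of $\Phi''$; choosing a linear section $\sigma\colon C\to H$ of $\rho$ and setting $g_k:=-\sigma\circ B_k$ makes this coefficient vanish.

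The step I expect to be the main obstacle is precisely the solvability of $\rho\circ g_k=-B_k$ at each stage, and this is where the surjectivity of $\rho$ is essential: since $B_k$ takes values in $C=\Image(\rho)$, it can always be absorbed, whereas attempting to strictify $\Phi$ itself towards $\bar H$ would meet an obstruction living in $\coker f_1$. Once $\Phi''\circ G$ is strict with linear part $\rho$, the $L_\infty[1]$ morphism relations into the trivial algebra $(C,0)$ collapse, in each arity $r\ge2$, to the single identity $\rho\circ s_r=0$, that is $\Image(s_r)\subseteq\ker\rho=\ker f_1$. Finally $(H,S)$ is a minimal model of $V$: it is $L_\infty[1]$-isomorphic to $(H,R)$ via $G$, so $F_V\circ G\colon(H,S)\dashrightarrow V$ is a weak equivalence, and this is the minimal model claimed in the statement.
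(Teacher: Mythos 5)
Your proof is correct and takes essentially the same route as the paper's: pass to minimal models, compose with a projection so that the linear part becomes a surjection onto (a copy of) the image of $f_1$, and then strictify this composition by an $L_\infty[1]$ isomorphism with identity linear part, so that the strictness relations into the trivial $L_\infty[1]$ algebra force $\rho\circ s_r=0$ for all $r\ge 2$. The only difference is that the paper invokes the strictification step as a citation (to \cite[Lemma 7.2]{yukawate}), whereas you prove it directly via the order-by-order construction $g_k:=-\sigma\circ B_k$, which is precisely the content of the cited lemma.
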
 

\begin{proof} It is not restrictive to assume both $V$ and $W$ minimal, i.e., $q_1=r_1=0$, and then 
$r_n=0$ for every $n$ since  $W$ is assumed homotopy abelian.
Denote by $U\subset W$ the image of $f_1$ and choose a projection $\pi\colon W\to U$. Then 
the composition $\pi F\colon V\dashrightarrow U$ is an $L_{\infty}[1]$ morphism with linear component 
$\pi f_1$ surjective. By general theory of $L_{\infty}[1]$ algebra 
(for a proof see e.g. \cite[Lemma 7.2]{yukawate}) there exists an
$L_{\infty}[1]$ isomorphism $G\colon (H,0,s_2,\ldots) \dashrightarrow V$ such that the composition 
$\pi FG\colon (H,0,s_2,\ldots) \dashrightarrow (U,0,0,\ldots)$ 
is strict and this implies that the image of every $s_r$ is contained in the kernel of the linear part of $\pi FG$.
\end{proof}

We finally come to the main object of interest in this paper, namely, formal $L_\infty[1]$ algebras. 	

\begin{definition} Given an $L_\infty[1]$ algebra $(V,Q)$, we denote by $r_2:H(V,q_1)^{\odot 2} \to H(V,q_1)$ the induced bracket on tangent cohomology, as in Remark \ref{rem: r2}. $V$ is said to be \emph{formal} if there exists a weak equivalence 
\[ F\colon (V,Q)=(V,q_1,q_2,q_3,\ldots,q_n,\ldots)\to(H(V,q_1),r_2)=(H(V,q_1),0,r_2,0,\ldots,0,\ldots)\]
	of $L_\infty[1]$ algebras, or in other words, if every minimal model of $(V,Q)$ is $L_\infty[1]$ isomorphic to $(H(V,q_1),r_2)$.
\end{definition}

\subsection{Formality versus quadraticity} \label{subsec.formalityquadra}
We denote by $\mathbf{Art}_{\K}$ the category of local Artin $\K$-algebras with residue field identified with $\K$. To any $L_\infty[1]$ algebra $V$ is associated a deformation functor $\Def_V\colon\Art_{\K}\to\Set$, sending $A\in\Art_{\K}$ with maximal ideal $\mA$ to the set of Maurer-Cartan elements in $V\otimes\mA$ modulo homotopy equivalence, see \cite{ManRendiconti} for details. Moreover, weakly equivalent $L_\infty[1]$ algebras yield isomorphic deformation functors. According to a well known general philosophy\footnote{Introduced by Nijenhuis and Richardson \cite{NijRich64} in the '60s and sponsored by Deligne, Drinfeld and others during the '80s in the form of private communications \cite{DtM,DtS}, further developed in the works of Goldman-Millson \cite{GoMil1}, Kontsevich \cite{Kont94}, Hinich \cite{hinichdescent,hinichDGC} and Manetti \cite{EDF}, among others, during the '90s, then investigated by Pridham \cite{pridham} and recently studied in the infinity categorical setting by Lurie \cite{lurie2}.}, over a field of characteristic zero every deformation problem is controlled in the above manner by some weak equivalence type of $L_\infty[1]$ algebras, or equivalently, by some isomorphism type of minimal $L_\infty[1]$ algebras. Knowing a controlling $L_\infty[1]$ algebra, it is easy to recover several important features of the deformation problem at hand: for instance, the tangent space $T^1\Def_V$ is isomorphic to $H^0(V,q_1)$, and there is a complete obstruction theory with values in $H^1(V,q_1)$. 

\newcommand{\CE}{\operatorname{CE}}

When $\dim\,H^0(V,q_1)<+\infty$, the deformation functor $\Def_V$ satisfies conditions (H1), (H2) and (H3) from Schlessinger's paper \cite[Theorem 2.11]{Sch}, and in particular, it admits a \emph{hull}. In order to read this off directly from $(V,Q)$ we set $H=H(V,q_1)$ and fix a minimal model $(H,R)$ of $(V,Q)$. We denote by $(H^0)^\vee,(H^1)^\vee$ the dual vector spaces. As $H^0$ is finite dimensional, the Taylor coefficients $r_n:(H^0)^{\odot n}\to H^1$ induce, via transposition, maps $r^\vee_n:(H^1)^{\vee}\to((H^0)^\vee)^{\odot n}$, which together assemble to a map $r^\vee=r_2^\vee+\cdots+r_n^\vee+\cdots:(H^1)^\vee\to\widehat{S}((H^0)^\vee)=\prod_{n\geq0}((H^0)^\vee)^{\odot n}$, where we denote by $\widehat{S}((H^0)^\vee)$ the completed symmetric algebra over $(H^0)^\vee$. If $\dim\,H^0=n$, we can identify $\widehat{S}((H^0)^\vee)$ with the usual algebra of formal power series $\K[[x_1,\ldots,x_n]]$: moreover, we denote by $\mathfrak{m}\subset\K[[x_1,\ldots,x_n]]$ the maximal ideal, and by $I\subset\mathfrak{m}^2$ the ideal generated by the image of $r^\vee$.
\begin{definition} In the above setup, we call the local noetherian complete $\K$-algebra
	\[ A =\frac{\K[[x_1,\ldots,x_n]]}{I} \]
	the \emph{Kuranishi algebra} of $(H,R)$. A bit improperly, we shall also refer to $A$ as the Kuranishi algebra of $(V,Q)$, but notice that it is determined by $(V,Q)$ only up to a non-canonical isomorphism. 
\end{definition}
The following result is essentially shown in \cite{fuka}.
\begin{proposition} Given an $L_\infty[1]$ algebra $(V,Q)$ with $\dim\,H^0(V,q_1)<+\infty$, the associated Kuranishi algebra is a hull for $\Def_V$.
\end{proposition}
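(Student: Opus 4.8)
The plan is to reduce to the minimal model, recognise the Kuranishi algebra as representing the Maurer--Cartan functor, and then invoke smoothness of the passage to homotopy classes. Since weakly equivalent $L_\infty[1]$ algebras yield isomorphic deformation functors and the algebra $A$ is built solely from the chosen minimal model $(H,R)$ of $(V,Q)$, I may assume $(V,Q)=(H,R)$ is minimal; then $q_1=0$, $H=H(V,q_1)$, and $\Def_V=\MC_H/(\text{homotopy})$, where for $B\in\Art_{\K}$ the set $\MC_H(B)$ consists of the $\xi\in H^0\otimes\mathfrak{m}_B$ solving the Maurer--Cartan equation $\sum_{k\geq2}\tfrac1{k!}r_k(\xi^{\odot k})=0$ in $H^1\otimes\mathfrak{m}_B$.

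First I would identify $h_A$ with $\MC_H$. Fixing a basis $e_1,\dots,e_n$ of $H^0$ dual to $x_1,\dots,x_n$, consider the universal element $\hat{\xi}=\sum_i e_i\otimes x_i\in H^0\otimes\widehat{S}((H^0)^\vee)$. A local homomorphism $A=\K[[x_1,\dots,x_n]]/I\to B$ is the same datum as a point $\xi\in\mathfrak{m}_B\otimes H^0$ at which every element of $I$ vanishes; since $I$ is generated by the image of $r^\vee$, and since pairing the Maurer--Cartan expression $\sum_{k\geq2}\tfrac1{k!}r_k(\hat{\xi}^{\odot k})$ against an arbitrary $\phi\in(H^1)^\vee$ recovers exactly $r^\vee(\phi)$ (up to the normalisation implicit in the identification $\widehat{S}((H^0)^\vee)=\K[[x_1,\dots,x_n]]$), the generators of $I$ are precisely the scalar components of the Maurer--Cartan equation for $\hat{\xi}$. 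Hence $h_A\cong\MC_H$ as functors $\Art_{\K}\to\Set$, and the truncations of $\hat{\xi}$ provide a compatible system of Maurer--Cartan elements over the $A/\mathfrak{m}_A^k$, i.e. a formal element $\hat{\xi}\in\liminv\Def_V(A/\mathfrak{m}_A^k)$.

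Composing with the quotient by homotopy yields a natural transformation $\eta\colon h_A\cong\MC_H\twoheadrightarrow\Def_H\cong\Def_V$, and $(A,\hat{\xi})$ is my candidate hull. On tangent spaces the inclusion $I\subset\mathfrak{m}^2$ gives $\mathfrak{m}_A/\mathfrak{m}_A^2=(H^0)^\vee$, so $Th_A=H^0=T^1\Def_V$; because $r_1=0$ the homotopy relation is trivial to first order, so $\eta$ realises this isomorphism and is in particular bijective on tangent spaces. It remains only to prove that $\eta$ is smooth, and this is the main obstacle.

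Smoothness means that for every small extension $B'\twoheadrightarrow B$ in $\Art_{\K}$ the map $\MC_H(B')\to\MC_H(B)\times_{\Def_H(B)}\Def_H(B')$ is surjective; equivalently, the quotient map from Maurer--Cartan elements to their homotopy classes is smooth. I would prove this by lifting homotopies: given $\xi'\in\MC_H(B')$ together with a homotopy over $B$ carrying $\xi'|_B$ to some $\xi_0\in\MC_H(B)$, I would lift the infinitesimal datum of that homotopy (an element of degree $-1$) arbitrarily along $B'\twoheadrightarrow B$ and integrate the resulting flow to obtain $\eta'\in\MC_H(B')$ with $\eta'|_B=\xi_0$ and $[\eta']=[\xi']$ in $\Def_H(B')$. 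The essential point, and the reason this lift meets no obstruction in $H^1$, is that homotopies are governed by the freely liftable degree $-1$ part of the algebra; this is exactly the general smoothness of $\MC\to\Def$ recorded in \cite{ManRendiconti,fuka}. Granting it, $(A,\hat{\xi})$ is smooth and bijective on tangent spaces over $\Def_V$, hence a hull, as required.
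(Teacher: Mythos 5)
The paper never proves this proposition: it is dismissed with the single remark that the result ``is essentially shown in'' Fukaya's paper, so there is no internal argument to compare yours against step by step. Your proof is correct and supplies exactly what that citation hides, along what is essentially the expected route. The three ingredients are the right ones: (i) homotopy invariance of the deformation functor reduces everything to the minimal model $(H,R)$ from which the Kuranishi algebra $A$ is built; (ii) the identification $h_A\cong\MC_H$ is in fact exact, not merely ``up to normalisation'': with the paper's convention $f_j=\sum_I\frac{r_I^j}{I!}x^I$, pairing $\sum_{k\ge2}\frac1{k!}r_k(\xi^{\odot k})$ against the basis of $(H^1)^\vee$ gives precisely $f_j$ evaluated at $\xi$, because $\frac{1}{k!}\sum_{i_1,\dots,i_k}$ collapses to $\sum_{|I|=k}\frac{1}{I!}$; (iii) tangent bijectivity holds since over $\K[\epsilon]$ both the Maurer--Cartan equation and the homotopy relation degenerate (the latter because $q_1=0$ forces any homotopy to have constant path component). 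The only load-bearing external fact is the smoothness of $\MC_H\to\Def_H$, and your sketch of it is the correct mechanism: a homotopy over $B$ is literally a flow, namely a $dt$-component $\eta(t)\in (H\otimes\K[t])^{-1}\otimes\mathfrak{m}_B$ together with the ODE $\dot\xi(t)=\pm\sum_k\frac1{k!}r_{k+1}\bigl(\eta(t)\odot\xi(t)^{\odot k}\bigr)$, so lifting $\eta(t)$ arbitrarily along a small extension (degree $-1$ elements satisfy no equation) and solving the ODE over $B'$ --- a purely algebraic integration, terminating by nilpotency of $\mathfrak{m}_{B'}$ --- yields a Maurer--Cartan lift whose restriction to $B$ agrees with the given homotopy by uniqueness of solutions. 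This is the $L_\infty$ analogue of the DGLA gauge-lifting argument and is indeed available in the sources you cite; spelling out the flow form of the Maurer--Cartan equation in $H\otimes\K[t,dt]$ is the one addition that would make your proof fully self-contained rather than partially deferred.
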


We denote by $\widehat{\Art}_{\K}$ the category of local noetherian complete $\K$-algebras with residue field equal to $\K$.
Every such an algebra, up to isomorphism, can be presented 
as a quotient 
\[A=\frac{\K[[x_1,\ldots,x_n]]}{(f_1,\ldots,f_m)}\]
where every $f_i$ has multiplicity $\mu(f_i)\ge 2$. Here 
$n$ is the embedding dimension of $A$ (the dimension of the Zariski tangent space).

\begin{definition}
	An algebra $A\in\widehat{\Art}_{\K}$ is called \emph{quadratic} if it is isomorphic to an algebra of type 
	$\K[[x_1,\ldots,x_n]]/I$, where the ideal $I$ is generated by homogeneous polynomials of degree two. Given an $L_\infty[1]$ algebra $(V,Q)$ such that $\dim\,H^0(V,q_1)<+\infty$, we say that it satisfies the \emph{quadraticity property} if the associated Kuranishi algebra is quadratic.
\end{definition}

Obviously, by homotopy invariance of the Kuranishi algebra, if an $L_\infty[1]$ algebra $(V,Q)$ is formal it satisfies the quadraticity property. The converse is in general not true: formality is a stronger property, putting strong constraints on the full derived deformation functor associated to $V$, while the Kuranishi algebra only remembers the classical part $\Def_V$. %(see for instance \cite{bibid}). 
On the other hand, in several situations the quadraticity property might be easier to verify: for instance, we have the following result, which follows from \cite[Theorem 2.11, Proposition 2.14 and Theorem 2.16]{MartPadova}.

\begin{proposition} Let $F:(V,Q)\dashrightarrow(W,R)$ be and $L_\infty[1]$ morphism, and assume that $H^0(f_1):H^0(V,q_1)\to H^0(W,r_1)$ is surjective, $H^1(f_1):H^1(V,q_1)\to H^1(W,r_1)$ is injective and $\dim\, H^0(V,q_1)<+\infty$. Then the $L_\infty[1]$ algebra $(V,Q)$ satisfies the quadraticity property if and only if so does $(W,R)$.
\end{proposition}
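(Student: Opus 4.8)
The plan is to reinterpret the two cohomological hypotheses as a \emph{smoothness} statement for the induced map of deformation functors, and then to read off the comparison of Kuranishi algebras as a formally smooth extension, under which quadraticity turns out to be stable in both directions.

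First I would reduce to the case in which both $(V,Q)$ and $(W,R)$ are minimal. Replacing each by a minimal model (Remark~\ref{rem:hodge}) changes neither the weak-equivalence type, nor the associated deformation functor, nor the Kuranishi algebra; moreover the $L_\infty[1]$ morphism induced between the minimal models has linear part $H(f_1)$, so the assumptions that $H^0(f_1)$ is surjective and $H^1(f_1)$ is injective are preserved. Note also that surjectivity of $H^0(f_1)$ together with $\dim H^0(V,q_1)<+\infty$ forces $\dim H^0(W,r_1)<+\infty$, so that $(W,R)$ also admits a hull, namely its Kuranishi algebra. From now on I write $A_V$, $A_W$ for the two Kuranishi algebras.

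The key observation is that, since $T^1\Def_V\cong H^0(V,q_1)$ with obstructions in $H^1(V,q_1)$ (and likewise for $W$), the hypotheses say exactly that the tangent map $T^1\Def_V\to T^1\Def_W$ is surjective and that the obstruction map $H^1(V,q_1)\to H^1(W,r_1)$ is injective. By the standard smoothness criterion for deformation functors (see \cite{Man,MartPadova}) the induced morphism $\Def_V\to\Def_W$ is therefore smooth. As both functors admit hulls, a smooth morphism of functors corresponds to a formally smooth morphism of hulls $A_W\to A_V$; concretely this produces an isomorphism of local $\K$-algebras
\[ A_V\;\cong\; A_W[[t_1,\dots,t_r]],\qquad r=\dim\ker H^0(f_1), \]
the extra variables $t_i$ being dual to a chosen splitting of $\ker H^0(f_1)$ (consistently, the embedding dimensions satisfy $\dim H^0(V,q_1)=\dim H^0(W,r_1)+r$).

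It then remains to prove that quadraticity is invariant under adjoining free formal variables, i.e.\ that $A_W[[t_1,\dots,t_r]]$ is quadratic if and only if $A_W$ is. One implication is immediate: if $A_W\cong\K[[y_1,\dots,y_m]]/J$ with $J$ generated by homogeneous quadrics, then $A_W[[t_1,\dots,t_r]]\cong\K[[y_1,\dots,y_m,t_1,\dots,t_r]]/J\K[[y,t]]$ is cut out by the very same quadrics, hence is quadratic. The converse descent — that quadraticity of $A_W[[t]]$ forces quadraticity of $A_W$ — is where the real work sits, and is the step I expect to be the main obstacle, since quadraticity is a condition on the existence of a \emph{homogeneous} presentation and a priori the quadric generators could involve the smooth variables $t_i$. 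I would handle it by choosing coordinates $z_1,\dots,z_{m+r}$ realising the embedding dimension of $A_V$ with the last $r$ equal to the images of the $t_i$; faithful flatness of $\K[[z_1,\dots,z_m]]\hookrightarrow\K[[z_1,\dots,z_m]][[t]]$ then shows that the defining ideal of $A_V$ is the extension of that of $A_W$ and preserves minimal numbers of generators together with their initial degrees, so that a homogeneous quadratic generating set descends to $A_W$. This descent is precisely the content of \cite[Theorem~2.16]{MartPadova}, which I would invoke rather than reprove. Combining the two implications, $A_V$ is quadratic if and only if $A_W$ is; that is, $(V,Q)$ satisfies the quadraticity property if and only if $(W,R)$ does, as required.
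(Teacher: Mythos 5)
Your proposal is correct and takes essentially the same route as the paper: the paper's entire proof of this proposition is the citation \cite[Theorem 2.11, Proposition 2.14 and Theorem 2.16]{MartPadova}, and these three results are precisely the three ingredients you reconstruct, namely the standard smoothness criterion (tangent surjective plus obstruction injective implies $\Def_V\to\Def_W$ smooth), the comparison of hulls $A_V\cong A_W[[t_1,\ldots,t_r]]$ along a smooth morphism, and the transfer of quadraticity across such a power-series extension. The only caveat is that your faithful-flatness sketch for the descent direction would need more care if carried out in full (a quadratic presentation of $A_W[[t_1,\ldots,t_r]]$ need not respect the product decomposition, so the ideal-theoretic bookkeeping is not automatic), but since you explicitly invoke \cite[Theorem 2.16]{MartPadova} for exactly that step --- just as the paper does --- the argument stands as written.
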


\begin{remark} The above proposition fails if in its statement we replace the quadraticity property by the property of being formal: to the best of the authors' knowledge,  the only result one can find in the literature going in a somewhat similar direction is the formality transfer theorem from \cite[Theorem 6.8]{Man4}, whose hypotheses are much harder to verify. With respect to the discussion in the introduction of \cite{budur}, this is essentially the reason why the quadraticity conjecture \cite[Conjecture 1.1]{budur} by Kaledin and Lehn \cite{KaLe} has been easier to handle than the full formality conjecture \cite[Conjecture 1.2]{budur}.
\end{remark}

Our aim in the remainder of this subsection is to show that in the special case when $H(V,q_1)=:H=H^0\oplus H^1$ is concentrated in degrees zero and one and $\dim\,H=\dim\,H^0+\dim\,H^1<+\infty$, then $V$ is formal if and only if the associated Kuranishi algebra is quadratic (the assumption $\dim\,H^1<+\infty$ is actually unnecessary, we keep it for simplicity and since it is usually satisfied in concrete examples).

First, we shall look more closely at quadratic algebras in $\widehat{\Art}_\K$.

\textbf{Notation.} For every $f\in \K[[x_1,\ldots,x_n]]$ we denote by $\mu(f)$ its multiplicity, and by 
$f^{(n)}$ its homogeneous component of degree $n$, hence $f=f^{(\mu(f))}+f^{(\mu(f)+1)}+\cdots$.

\begin{lemma} Let 
	\[A=\frac{\K[[x_1,\ldots,x_n]]}{(f_1,\ldots,f_m)},\qquad \mu(f_i)\ge 2,\]
	be a quadratic algebra. Then there exists an isomorphism 
	\[ \phi\colon \K[[x_1,\ldots,x_n]]\to \K[[x_1,\ldots,x_n]]\]
	with differential (i.e. the linear part) equal to the identity  such that 
	the ideal
	\[ (\phi(f_1),\ldots,\phi(f_m))\]
	is generated by the quadrics  $\phi(f_i)^{(2)}=f_i^{(2)}$, $i=1,\ldots,m$.
\end{lemma}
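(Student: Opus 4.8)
The plan is to exploit the definition of quadraticity to produce, after a suitable normalization of its linear part, an automorphism of $\K[[x_1,\ldots,x_n]]$ that already does the job. Write $R=\K[[x_1,\ldots,x_n]]$, let $\mathfrak{m}$ be its maximal ideal, and set $I=(f_1,\ldots,f_m)$; since $\mu(f_i)\ge 2$ we have $I\subseteq\mathfrak{m}^2$, so the embedding dimension of $A=R/I$ is exactly $n$. By hypothesis $A$ is quadratic, i.e. $A\cong S/J$ with $S=\K[[y_1,\ldots,y_N]]$ and $J$ generated by homogeneous polynomials of degree two; as $J\subseteq\mathfrak{m}_S^2$ the embedding dimension of $S/J$ equals $N$, and since the embedding dimension is an isomorphism invariant we get $N=n$. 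Thus we may regard the quadraticity as an isomorphism $\bar\theta\colon R/I\to R/J$ with $J$ generated by quadrics inside the \emph{same} power series ring $R$.

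First I would lift $\bar\theta$ to an automorphism of $R$. Choosing lifts in $\mathfrak{m}$ of the images $\bar\theta(\bar{x}_i)$ and invoking the universal property of the power series ring, one obtains a continuous $\K$-algebra endomorphism $\theta\colon R\to R$ with $\pi_J\circ\theta=\bar\theta\circ\pi_I$; in particular $\theta(I)\subseteq J$. The map induced by $\theta$ on $\mathfrak{m}/\mathfrak{m}^2$ agrees with the one induced by $\bar\theta$ on cotangent spaces (here again $I,J\subseteq\mathfrak{m}^2$), hence is an isomorphism, so $\theta$ is an automorphism of $R$. Then $\theta$ induces an isomorphism $R/I\xrightarrow{\sim}R/\theta(I)$ through which $\bar\theta$ factors as the natural surjection $R/\theta(I)\to R/J$; since $\bar\theta$ is injective this surjection is an isomorphism, forcing $\theta(I)=J$. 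Finally, denoting by $\theta_1\in\GL_n$ the linear part of $\theta$ and replacing $\theta$ by $\phi:=\theta_1^{-1}\circ\theta$, the linear part becomes the identity while $\phi(I)=\theta_1^{-1}(J)$ is still generated by quadrics (a linear substitution preserves homogeneous degree).

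It then remains a degree-two bookkeeping. Because $\phi$ has linear part the identity, $\phi(x_j)=x_j+(\text{terms of degree}\ge 2)$, so for any $g=\sum a_i f_i\in I$ the degree-two component of $\phi(g)$ is $\sum a_i(0)\,f_i^{(2)}$, where $a_i(0)$ is the constant term of $a_i$; letting $g$ vary, the collection of degree-two components of elements of $\phi(I)$ is exactly $\mathrm{span}(f_1^{(2)},\ldots,f_m^{(2)})$. On the other hand $\phi(I)=\theta_1^{-1}(J)$ is a homogeneous ideal, so its degree-two graded piece coincides with this collection of degree-two components; comparing, the degree-two piece of $\phi(I)$ equals $\mathrm{span}(f_1^{(2)},\ldots,f_m^{(2)})$. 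Since $\phi(I)$ is generated by quadrics it is generated by its degree-two piece, whence $\phi(I)=(f_1^{(2)},\ldots,f_m^{(2)})$. As $(\phi(f_1),\ldots,\phi(f_m))=\phi(I)$ and $\phi(f_i)^{(2)}=f_i^{(2)}$ holds automatically once the linear part is the identity, this $\phi$ satisfies the conclusion. I expect the main obstacle to be the lifting step, specifically the verification that $\theta(I)=J$ exactly rather than merely $\theta(I)\subseteq J$, together with the check that normalizing the linear part does not destroy quadratic generation; the concluding degree-two computation is then routine.
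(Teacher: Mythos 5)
Your proof is correct and follows essentially the same route as the paper's: produce an automorphism of $\K[[x_1,\ldots,x_n]]$ realizing the quadraticity, compose with the inverse of its linear part to make the linear part the identity, and then compare degree-two components to conclude that the ideal equals $(f_1^{(2)},\ldots,f_m^{(2)})$. The only difference is that you carefully justify the lifting step (equality of embedding dimensions, lifting $\bar\theta$ to an automorphism $\theta$ of the power series ring with $\theta(I)=J$), which the paper compresses into the phrase ``by assumption there exists an isomorphism $\psi$'' -- a worthwhile addition rather than a deviation.
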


\begin{proof} By assumption there exists 
	an isomorphism 
	\[ \psi\colon \K[[x_1,\ldots,x_n]]\to \K[[x_1,\ldots,x_n]]\]
	such that the ideal $\psi(f_1,\ldots,f_m)$ is generated by quadrics. Define 
	$\phi=\psi_1^{-1}\psi$, where $\psi_1$ is the  automorphism induced by the linear part of 
	$\psi$. Then $(\phi(f_1),\ldots,\phi(f_m))$ is generated by quadrics:
	\[ (\phi(f_1),\ldots,\phi(f_m))=(q_1,\ldots,q_r),\qquad q_i=q_i^{(2)}\,.\]
	Since $\mu(f_i)\ge 2$ for every $i$, 
	every $q_i$ is a linear combination of $f_1^{(2)},\ldots,f_m^{(2)}$ and conversely.
	Thus $f_1^{(2)},\ldots,f_m^{(2)}$ and   $q_1,\ldots,q_r$ generate the same vector space and therefore also the same ideal.
\end{proof}

For simplicity of notation, we denote by $P=\K[[x_1,\ldots,x_n]]$ and by $\mathfrak{m}\subset P$ its maximal ideal.

\begin{lemma}
	With the above notations, let $f_1,\ldots,f_m\in \mathfrak{m}^2$ such that the ideal 
	$(f_1,\ldots,f_n)$ is generated by quadrics. Then there exists a matrix 
	$A=(a_{ij})\in M_{m,m}(P)$ such that $(a_{ij})\equiv \Id$ $\pmod{\mathfrak{m}}$, and
	\[ \sum_{j=1}^m a_{ij}\,f_j^{(2)}=f_i,\qquad i=1,\ldots,m\,.\]
\end{lemma}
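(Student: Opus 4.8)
The plan is to show that the quadratic parts $q_i:=f_i^{(2)}$ already generate the ideal $I:=(f_1,\ldots,f_m)$, and then to correct an arbitrary expression of the $f_i$ in terms of the $q_j$ so that the coefficient matrix becomes congruent to the identity modulo $\mathfrak{m}$. First I would exploit the hypothesis that $I$ is generated by quadrics: writing $I=(g_1,\ldots,g_s)$ with the $g_k$ homogeneous of degree two, the ideal $I$ is homogeneous in $P=\K[[x_1,\ldots,x_n]]$, as one checks by expanding a power-series combination $\sum_k b_k g_k$ into its homogeneous components $\sum_k b_k^{(e-2)}g_k$, each of which again lies in $I$. In particular, since each $f_i\in\mathfrak{m}^2$ has neither constant nor linear term, its degree-two component $q_i=f_i^{(2)}$ is itself a homogeneous element of $I$.

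Next I would identify the degree-two graded piece $I_2$ of $I$ in two ways. On one hand, a homogeneous quadric $h\in I$ equals its own degree-two part $\sum_k b_k^{(0)}g_k$, so $I_2=\langle g_1,\ldots,g_s\rangle$. On the other hand, writing a homogeneous degree-two element of $I$ as $\sum_i c_i f_i$ and extracting the degree-two part (using $f_i\in\mathfrak{m}^2$) gives $I_2=\langle q_1,\ldots,q_m\rangle$. Hence the quadrics $q_i$ span the same $\K$-subspace as the generators $g_k$, so that $(q_1,\ldots,q_m)\supseteq(g_1,\ldots,g_s)=I$; combined with $q_i\in I$ this yields $(q_1,\ldots,q_m)=I$. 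In particular each $f_i$ lies in $(q_1,\ldots,q_m)$, so we may write $f_i=\sum_j a_{ij}q_j$ for some $a_{ij}\in P$.

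Finally I would correct the matrix $A=(a_{ij})$. Extracting the degree-two component of $f_i=\sum_j a_{ij}q_j$ yields $q_i=\sum_j a_{ij}^{(0)}q_j$, so that each row of $A_0-\Id$ (where $A_0=(a_{ij}^{(0)})$ is the constant part of $A$) is a $\K$-linear relation among the $q_j$. Subtracting from each row $i$ of $A$ the constant relation supplied by the corresponding row of $A_0-\Id$ leaves every expression $f_i=\sum_j a_{ij}q_j$ unchanged, while forcing the new constant part to equal $\Id$; the resulting matrix then satisfies $A\equiv\Id\pmod{\mathfrak{m}}$ and $\sum_j a_{ij}f_j^{(2)}=f_i$, as required.

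The hard part is the first claim, that the quadratic parts generate the whole ideal: this is precisely where the quadraticity hypothesis is indispensable, since for a single $f=x^2+y^3$ the ideal $(f)$ is not generated by quadrics and $f\notin(x^2)$. The argument hinges on the homogeneity of $I$ together with the vanishing of the linear parts of the $f_i$. Once this is in hand, the correction of $A$ is a purely linear and formal manipulation.
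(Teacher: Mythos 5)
Your proof is correct, and while it shares the paper's central insight---that the quadratic parts $q_j:=f_j^{(2)}$ generate the ideal $I=(f_1,\ldots,f_m)$---the way you reach the normalization $A\equiv\Id\pmod{\mathfrak{m}}$ is genuinely different. The paper first makes a constant linear change of generators: it picks an invertible $C\in M_{m,m}(\K)$ so that the new generators $g_i=\sum_j c_{ij}f_j$ have quadratic parts $g_1^{(2)},\ldots,g_r^{(2)}$ linearly independent and $g_{r+1}^{(2)}=\cdots=g_m^{(2)}=0$; linear independence then forces the constant terms in any expression $g_i=\sum_{j\le r} b_{ij}g_j^{(2)}$ to be exactly $\delta_{ij}$, the matrix $B$ is completed by an identity block on the indices $j>r$, and one conjugates back via $A=C^{-1}BC$. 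You avoid the normalization, the rank bookkeeping, and the conjugation entirely: you take an arbitrary expression $f_i=\sum_j a_{ij}q_j$, note that the rows of $A_0-\Id$ are $\K$-linear relations among the $q_j$ (by extracting degree-two parts), and subtract them, which fixes the constant part to $\Id$ without disturbing the identities $\sum_j a_{ij}q_j=f_i$. This correction trick is cleaner precisely because it needs no linear independence. In exchange, you spell out carefully the step the paper compresses into a terse ``Thus'': using the homogeneity of an ideal generated by quadrics you identify $I_2=\langle g_1,\ldots,g_s\rangle=\langle q_1,\ldots,q_m\rangle$ and conclude $I=(q_1,\ldots,q_m)$ (in the paper this argument is implicit, having essentially appeared in the proof of the preceding lemma). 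Both arguments are complete; yours is self-contained and somewhat more economical.
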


\begin{proof} There exists an invertible matrix $C=(c_{ij})\in M_{m,m}(\K)$  such that the power series 
	$g_i=\sum_{j=1}^m c_{ij}\,f_j$ have the property that 
	$g_1^{(2)},\ldots,g_r^{(2)}$ are linearly independent over $\K$ and 
	$g_{r+1}^{(2)}=\cdots=g_m^{(2)}=0$ for some $0\le r\le m$.
	
	Thus we have $(g_1,\ldots,g_m)=(g_1^{(2)},\ldots,g_r^{(2)})$ and for every $i=1,\ldots,m$ there exists power series $b_{i1},\ldots,b_{ir}\in P$ such that 
	\[ \sum_{j=1}^r b_{ij}g_j^{(2)}=g_i.\,\]
	In particular 
	\[ \sum_{j=1}^r b_{ij}(0)g_j^{(2)}=g_i^{(2)}\]
	and then $b_{ij}(0)=1$ if $i=j\le r$ and 
	$b_{ij}(0)=0$ otherwise.
	For every $j>r$ define $b_{ij}=1$ if $i=j$ and $b_{ij}=0$ if $i\not=j$. 
	Then the matrix $B=(b_{ij})\in M_{m,m}(P)$  is such that 
	$(b_{ij}(0))=\Id$  and
	\[ \sum_{j=1}^m b_{ij}\,g_j^{(2)}=g_i,\qquad i=1,\ldots,m\,.\]
	Finally take $A=C^{-1}BC$.
\end{proof}

\begin{proposition}\label{prop:quad} Notation as above. For a sequence 
	$f_1,\ldots,f_m\in \mathfrak{m}^2$ the following conditions are equivalent:
	
	\begin{enumerate}
		
		\item the algebra $P/(f_1,\ldots,f_m)$ is quadratic;
		
		\item there exists an isomorphism of algebras $\phi\colon P\to P$ with linear part equal the identity and a 
		matrix $(a_{ij})\in M_{m,m}(P)$ with $(a_{ij}(0))=\Id$ such that 
		\[ \sum_{j=1}^m a_{ij}\,f_j^{(2)}=\phi(f_i)\,.\]
	\end{enumerate}
\end{proposition}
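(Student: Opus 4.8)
The plan is to deduce both implications directly from the two lemmas immediately preceding the statement, the only substantive point being the invertibility of the matrix $(a_{ij})$ over $P$.

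For $(1)\Rightarrow(2)$ I would first invoke the first of the two preceding lemmas: since $P/(f_1,\ldots,f_m)$ is quadratic, there is an isomorphism $\phi\colon P\to P$ with linear part the identity such that the ideal $(\phi(f_1),\ldots,\phi(f_m))$ is generated by the quadrics $\phi(f_i)^{(2)}=f_i^{(2)}$ (the equality of quadratic parts being immediate from $\phi=\Id+(\text{higher order})$ together with $f_i\in\mathfrak{m}^2$). I would then apply the second preceding lemma to the sequence $g_i:=\phi(f_i)\in\mathfrak{m}^2$, whose generated ideal is by construction generated by the quadrics $g_i^{(2)}$; this produces a matrix $(a_{ij})\in M_{m,m}(P)$ with $(a_{ij}(0))=\Id$ and $\sum_{j}a_{ij}g_j^{(2)}=g_i$. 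Substituting $g_j^{(2)}=f_j^{(2)}$ and $g_i=\phi(f_i)$ gives exactly $\sum_{j}a_{ij}f_j^{(2)}=\phi(f_i)$, which is condition $(2)$.

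For the converse $(2)\Rightarrow(1)$ I would read the relations $\sum_{j}a_{ij}f_j^{(2)}=\phi(f_i)$ as the single matrix identity expressing the column vector $(\phi(f_1),\ldots,\phi(f_m))^{T}$ as $A=(a_{ij})$ times the column vector $(f_1^{(2)},\ldots,f_m^{(2)})^{T}$. The hypothesis $(a_{ij}(0))=\Id$ makes $\det A$ a power series with constant term $1$, hence a unit in the local ring $P$, so $A$ is invertible over $P$; inverting expresses each $f_j^{(2)}$ as a $P$-linear combination of the $\phi(f_i)$, while the relations themselves give the opposite inclusion. Thus $(\phi(f_1),\ldots,\phi(f_m))=(f_1^{(2)},\ldots,f_m^{(2)})$ is generated by quadrics, and since $\phi$ is an automorphism it induces an isomorphism $P/(f_1,\ldots,f_m)\cong P/(\phi(f_1),\ldots,\phi(f_m))=P/(f_1^{(2)},\ldots,f_m^{(2)})$, which exhibits $P/(f_1,\ldots,f_m)$ as quadratic.

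The only step beyond bookkeeping is the invertibility of $(a_{ij})$ over the power series ring, which I expect to be the main --- though quite mild --- obstacle: it rests on the standard fact that a square matrix over the local ring $P$ whose reduction modulo $\mathfrak{m}$ is invertible is itself invertible. Everything else amounts to matching up the quadratic parts and transporting ideals along the automorphism $\phi$.
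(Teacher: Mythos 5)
Your proposal is correct and is exactly the argument the paper intends: its proof of Proposition~\ref{prop:quad} is literally ``Immediate from lemmas,'' meaning the combination of the two preceding lemmas that you spell out, with the converse direction resting on the standard fact that a matrix over the local ring $P$ congruent to the identity modulo $\mathfrak{m}$ is invertible. Your write-up simply makes explicit the bookkeeping (matching quadratic parts via $\phi(f_i)^{(2)}=f_i^{(2)}$ and transporting the ideal along $\phi$) that the paper leaves to the reader.
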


\begin{proof} Immediate from lemmas.
\end{proof}

Next, we shall look more closely at the category of minimal $L_\infty[1]$ algebras $(H=H^0\oplus H^1,R)$ concentrated in degrees zero and one and of finite total dimension $\dim\,H=\dim\,H^0+\dim\,H^1<+\infty$. Since $\dim\,H<+\infty$, denoting by $H^\vee$ the graded dual ($(H^\vee)^i=(H^{-i})^{\vee}$), transposition induces an anti-isomorphism of graded Lie algebras
\[ \Coder(S^c(H))\cong\prod_{n\ge0} \Hom(H^{\odot n},H) \xrightarrow{\cong} \prod_{n\geq0} \Hom(H^\vee,(H^\vee)^{\odot n})\cong\Der(\widehat{S}(H^\vee))   \]
from $\Coder(S^c(H))$ to the graded Lie algebra of derivations of the completed symmetric algebra $\widehat{S}(H^\vee)=\prod_{n\geq0}(H^\vee)^{\odot n}$ over $H^\vee$. In particular, the $L_\infty[1]$ algebra structure $R$ on $H$ corresponds, under transposition, to a DG algebra structure $R^\vee$ on $\widehat{S}(H^\vee)$: we call the DG algebra $(\widehat{S}(H^\vee),R^\vee)$ the \emph{Chevalley-Eilenberg} algebra of $(H,R)$, and denote it by $\CE(H,R)^\vee$. In this context, the Kuranishi algebra of $(H,R)$ is just $H^0(\operatorname{CE}(H,R)^\vee,R^\vee)$. 

We fix bases $e_1,\ldots,e_n$ and $\epsilon_1,\ldots,\epsilon_m$ of $H^0$ and $H^1$ respectively, together with the dual bases $x_1,\ldots,x_n$ and $\eta_1,\ldots,\eta_m$ of $(H^0)^\vee$ and $(H^1)^\vee$. We shall denote the graded algebra $\widehat{S}(H^\vee)$ by $\mathbb{K}[[x_1,\ldots,x_n,\eta_1,\ldots,\eta_m]]$, that is, the free complete graded commutative algebra generated by $x_1,\ldots,x_n$ in degree zero and $\eta_1,\ldots,\eta_m$ in degree minus one: in particular, its degree zero component is the usual algebra $\mathbb{K}[[x_1,\ldots,x_n]]$ of formal power series. As before, we denote by $\mathfrak{m}\subset\mathbb{K}[[x_1,\ldots,x_n]]$ the maximal ideal. The DG algebra structure $R^\vee$ on $\mathbb{K}[[x_1,\ldots,x_n,\eta_1,\ldots,\eta_m]]$ is completely determined by the $m$-tuple of formal power series $R^\vee(\eta_1)=:f_1,\ldots,R^\vee(\eta_m)=:f_m$, and since $R$ is a minimal $L_\infty[1]$ algebra structure $f_1,\ldots,f_m\in\mathfrak{m}^2$. More explicitly, if we denote by $I=(i_1,\ldots,i_n)\in\mathbb{N}^n$ a multi-index, by $x^I:=x_1^{i_1}\cdots x_n^{i_n}$, $e_I:=e_1^{\odot i_1}\odot\cdots\odot e_n^{\odot i_n}$ and $I!=i_1!\cdots i_n!$, if the corestriction $r\in\Hom(S^c(H^0),H^1)$ of $R$ is given by $r(e_I)=\sum_{j=1}^m r_I^j\epsilon_j$, then the corresponding formal power series are given by $f_j=\sum_{I\in\mathbb{N}^n}\frac{r_I^j}{I!}x^I$. %We notice that we recover the Kuranishi algebra $\frac{\mathbb{K}[[x_1,\ldots,x_n]]}{(f_1,\ldots,f_m)}$ from the previous subsection as the $H^0$ of the DG algebra $\mathbb{K}[[x_1,\ldots,x_n,\eta_1,\ldots,\eta_m]]$.

Let $(K=K^0\oplus K^1,Q)$ be another finite dimensional minimal $L_\infty[1]$ algebra in degrees zero and one. We fix bases $e'_1,\ldots,e'_s$ and $\epsilon'_1,\ldots,\epsilon'_r$ of $K^0$ and $K^1$ respectively, together with the dual bases $y_1,\ldots,y_s$ and $\theta_1,\ldots,\theta_r$ of $(K^0)^\vee$ and $(K^1)^\vee$. Let the corresponding DG algebra structure on $\mathbb{K}[[y_1,\ldots,y_s,\theta_1,\ldots,\theta_r]]$ be given by $Q^\vee(\theta_1)=g_1,\ldots, Q^\vee(\theta_r)=g_r\in\mathbb{K}[[y_1,\ldots,y_s]]$. A morphism of graded coalgebras $F:S^c(H)\to S^c(K)$ corresponds dually to a morphism of graded algebras 
\[ F^\vee: \mathbb{K}[[y_1,\ldots,y_s,\theta_1,\ldots,\theta_r]] \to \mathbb{K}[[x_1,\ldots,x_n,\eta_1,\ldots,\eta_m]] \]
The latter is completely determined by its restriction to the degree zero components, which we denote by $\phi:\mathbb{K}[[y_1,\ldots,y_s]]\to\mathbb{K}[[x_1,\ldots,x_n]]$, and the formal power series $a_{ij}\in\mathbb{K}[[x_1,\ldots,x_n]]$ defined by $F^\vee(\theta_i)=\sum_{j=1}^m a_{ij}\eta_j$, $i=1,\ldots,r$. The requirement that $F$ in an $L_\infty[1]$ morphism, that is, $F^\vee$ is a morphism of DG algebras, becomes 
\begin{equation}\label{eq:morp} \phi(g_i) = F^\vee Q^\vee(\theta_i) =  R^\vee F^\vee(\theta_i) = R^\vee\left(\sum_{j=1}^m a_{ij}\eta_j\right) = \sum_{j=1}^m a_{ij}f_j,\qquad\forall\, i=1,\ldots,r.
\end{equation}

Putting together the previous considerations, the desired result follows easily.

\begin{theorem}\label{thm.4sedici} 
Let $(V,Q)$ be an $L_\infty[1]$ algebra with finite dimensional tangent cohomology $H(V,q_1)=:H=H^0\oplus H^1$ concentrated in degrees zero and one. Then $V$ is formal if and only if it satisfies the quadraticity property.
\end{theorem}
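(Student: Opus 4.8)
The plan is to reduce the statement to the very concrete reformulation of quadraticity developed in Proposition~\ref{prop:quad}. Since $\Def_V$ and the Kuranishi algebra only depend on the weak-equivalence type of $(V,Q)$, I may replace $(V,Q)$ by a minimal model. By hypothesis the tangent cohomology $H=H^0\oplus H^1$ is concentrated in degrees zero and one with $\dim\,H<+\infty$, so this minimal model is precisely a finite-dimensional minimal $L_\infty[1]$ algebra $(H,R)$ of the type analysed just above the statement, encoded dually by the Chevalley--Eilenberg DG algebra $(\K[[x_1,\ldots,x_n,\eta_1,\ldots,\eta_m]],R^\vee)$ with $R^\vee(\eta_i)=f_i\in\mathfrak{m}^2$. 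The Kuranishi algebra is then exactly $A=\K[[x_1,\ldots,x_n]]/(f_1,\ldots,f_m)$, and formality of $(V,Q)$ amounts to the existence of an $L_\infty[1]$ isomorphism $(H,R)\cong(H,0,r_2,0,\ldots)$, i.e.\ to $(H,R)$ being isomorphic to the \emph{purely quadratic} minimal $L_\infty[1]$ algebra whose only nonzero Taylor coefficient is the induced bracket $r_2$.

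The forward implication is the easy one and was already noted: formality implies homotopy invariance gives the same Kuranishi algebra as the quadratic model $(H,0,r_2,0,\ldots)$, whose associated power series $f_i^{(2)}=r_i^{\vee}$ are homogeneous of degree two, so $A$ is quadratic by definition. For the converse I would argue as follows. Suppose $A$ is quadratic. By Proposition~\ref{prop:quad} there is an algebra automorphism $\phi\colon P\to P$ with linear part the identity and a matrix $(a_{ij})\in M_{m,m}(P)$ with $(a_{ij}(0))=\Id$ such that $\sum_{j}a_{ij}f_j^{(2)}=\phi(f_i)$ for all $i$. The point is that this is exactly the data~\eqref{eq:morp} defining an $L_\infty[1]$ morphism: take $(K,Q)$ to be the purely quadratic model $(H,0,r_2,0,\ldots)$, so that $g_i=f_i^{(2)}$, let the degree-zero part of $F^\vee$ be $\phi$, and set $F^\vee(\theta_i)=\sum_j a_{ij}\eta_j$. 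Then \eqref{eq:morp} reads $\phi(f_i^{(2)})=\sum_j a_{ij}f_j$, so I must arrange the quadratic model and the matrix so that the identity produced by Proposition~\ref{prop:quad} matches \eqref{eq:morp} on the nose; this is the step requiring care.

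The main technical obstacle is therefore bookkeeping the two sides of \eqref{eq:morp}: Proposition~\ref{prop:quad} delivers $\sum_j a_{ij}f_j^{(2)}=\phi(f_i)$, whereas an $L_\infty[1]$ morphism \emph{into} $(V,Q)$ requires $\phi(g_i)=\sum_j a_{ij}f_j$. I would resolve this by constructing the morphism in the correct direction, namely an $L_\infty[1]$ \emph{iso}morphism $F\colon(H,0,r_2,0,\ldots)\dashrightarrow(H,R)$ from the quadratic model to $(H,R)$; dually this is a DG algebra map $F^\vee\colon\CE(H,R)^\vee\to\CE(H,0,r_2,\ldots)^\vee$, and with $g_i=f_i$ being the full power series and the target having structure maps $\eta_i\mapsto f_i^{(2)}$, the compatibility \eqref{eq:morp} becomes precisely $\phi(f_i)=\sum_j a_{ij}f_j^{(2)}$, which is what Proposition~\ref{prop:quad} supplies. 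The condition $(a_{ij}(0))=\Id$ together with $\phi$ having identity linear part guarantees that the linear part of $F$ is the identity on $H$, hence $F$ is a weak equivalence (indeed an isomorphism, since everything is minimal and finite dimensional). This exhibits $(H,R)$ as formal, completing the proof; the remaining verifications are the routine checks that $F^\vee$ is a well-defined filtered algebra homomorphism and an isomorphism.
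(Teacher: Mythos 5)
Your proof is correct and follows essentially the same route as the paper: pass to a minimal model $(H,R)$, encode it dually via the Chevalley--Eilenberg algebra, invoke Proposition~\ref{prop:quad}, and interpret the resulting pair $\left(\phi,(a_{ij})\right)$ through Equation~\eqref{eq:morp} as an $L_\infty[1]$ isomorphism with identity linear part between $(H,R)$ and $(H,r_2)$. Your careful bookkeeping of the direction --- constructing $F\colon(H,r_2)\dashrightarrow(H,R)$ so that \eqref{eq:morp} literally becomes $\phi(f_i)=\sum_j a_{ij}f_j^{(2)}$ --- is in fact slightly more precise than the paper's own writeup, which states the arrow as $(H,R)\dashrightarrow(H,r_2)$; since the linear part is the identity the morphism is invertible, so the discrepancy is immaterial.
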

\begin{proof} The only if implication is clear. Conversely, let $(H,R)\dashrightarrow (V,Q)$ be a minimal model of $(V,Q)$. As before, we fix bases of $H^0, H^1$, and the dual bases $x_1\ldots,x_n$, $\eta_1,\ldots,\eta_m$ of $(H^0)^\vee$, $(H^1)^\vee$ respectively: then the $L_\infty[1]$ algebra structure $R$ is determined by the formal power series $f_i=R^\vee(\eta_i)\in\mathfrak{m}^2$, $i=1,\ldots,m$, and the Kuranishi algebra of $(V,Q)$ is
	\[\frac{\K[[x_1,\ldots,x_n]]}{(f_1,\ldots,f_m)}.\]
	If the latter is quadratic, we can construct $\phi:\K[[x_1,\ldots,x_n]]\to\K[[x_1,\ldots,x_n]]$ and $A=(a_{ij})$ as in Proposition \ref{prop:quad}. According to the discussion preceding Equation \eqref{eq:morp}, the datum of $\phi$ and $A$ is equivalent to the datum of an $L_\infty[1]$ isomorphism
	\[ (H,R)=(H,0,r_2,r_3.\ldots,r_k,\ldots)\dashrightarrow(H,r_2)=(H,0,r_2,0,\ldots,0,\ldots) \]
	with linear part the identity, hence $(V,Q)$ is formal.

\end{proof}

\begin{corollary}\label{cor.4sedici} 	
Let $(V,Q)$ be an $L_\infty[1]$ algebra with tangent cohomology in nonnegative degree.  
Assume that:
\begin{enumerate}

\item $H^0(V,q_1)$ and $H^1(V,q_1)$ are finite dimensional vector spaces;

\item there exists an $L_{\infty}$-morphism $f\colon (V,Q)\dashrightarrow (W,R)$ into a homotopy abelian $L_\infty[1]$ algebra $(W,R)$ such that $f\colon H^i(V,q_1)\to H^i(W,r_1)$ is injective for every $i\ge 2$.
\end{enumerate}
Then $V$ is formal if and only if it satisfies the quadraticity property.
\end{corollary}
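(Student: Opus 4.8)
The ``only if'' direction is standard: formality forces the Kuranishi algebra to agree with that of $(H(V,q_1),r_2)$, which is manifestly quadratic, so I would only spell out the converse.

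The plan for the converse is to massage a minimal model of $(V,Q)$ into the two-term situation already settled in Theorem~\ref{thm.4sedici}. First I would feed the given morphism $f\colon(V,Q)\dashrightarrow(W,R)$ into Lemma~\ref{lem.4sette}: since $(W,R)$ is homotopy abelian, this returns a minimal model $(H,S)=(H(V,q_1),0,s_2,s_3,\ldots)$ whose every higher bracket $s_r$ has image inside the kernel of the degree zero cohomology map induced by $f$ (up to the linear part of an $L_\infty[1]$ isomorphism, which is immaterial below). By hypothesis~(2) this map is injective in all degrees $\ge 2$, so its kernel is a graded subspace contained in $H^0\oplus H^1$.

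Next I would run a degree count. Every $s_r$ raises degree by one, so on a homogeneous product it outputs something of degree $\deg h_1+\cdots+\deg h_r+1$; since the tangent cohomology sits in nonnegative degrees and the output must land in $H^0\oplus H^1$, the only surviving brackets are those with all inputs in $H^0$ and output in $H^1$. Hence the minimal model splits as an $L_\infty[1]$ direct sum $(H^0\oplus H^1,S')\oplus(H^{\ge 2},0)$, with the second summand carrying the trivial structure and contributing nothing to the Kuranishi algebra. Thus the Kuranishi algebra of $(V,Q)$ equals that of the finite dimensional, two-term summand $(H^0\oplus H^1,S')$, which is therefore quadratic by assumption.

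At this point Theorem~\ref{thm.4sedici} applies verbatim to $(H^0\oplus H^1,S')$ — finite dimensional tangent cohomology concentrated in degrees zero and one — and yields its formality, i.e.\ an $L_\infty[1]$ isomorphism onto an algebra carrying only a binary bracket. Summing with the already trivial summand, the full minimal model $(H,S)$ becomes $L_\infty[1]$ isomorphic to a minimal algebra with no Taylor coefficients of arity $\ge 3$; such an algebra is a graded Lie algebra and, being a minimal model of $(V,Q)$, is necessarily isomorphic to $(H(V,q_1),r_2)$, so $(V,Q)$ is formal. The only real content is the degree-counting/splitting step: once the higher brackets are confined to $H^0$ and $H^1$, everything reduces cleanly to Theorem~\ref{thm.4sedici}, and the main thing to check carefully is that the trivial summand in degrees $\ge 2$ perturbs neither the Kuranishi algebra nor the notion of formality.
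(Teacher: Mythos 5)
Your proposal is correct and takes essentially the same route as the paper's own proof: apply Lemma~\ref{lem.4sette} to the given morphism $f$, deduce that the higher brackets of the resulting minimal model land in $H^0\oplus H^1$, split the model as the $L_\infty[1]$ product $(H^0\oplus H^1,S')\times(\oplus_{i\ge 2}H^i,0)$, and conclude by Theorem~\ref{thm.4sedici} together with the invariance of the Kuranishi algebra and of formality under this splitting. Your explicit degree count (all inputs in $H^0$, output in $H^1$) is exactly the justification the paper leaves implicit behind its ``Hence $H$ is the direct product'' step.
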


\begin{proof} By Lemma~\ref{lem.4sette} there exists a minimal model $(H,0,s_2,\ldots)$ such that 
$H^i=0$ for every $i<0$ and the image of $s_r$ is contained in $H^0\oplus H^1$ for every $r$.

Hence $H$ is the direct product of two $L_{\infty}[1]$ algebras, namely 
\[(H^0\oplus H^1,0,s_2,\ldots)\times (\oplus_{i\ge 2}H^i,0,0,\ldots)\,.\] 
Thus if $H^0\oplus H^1$ is formal then 
also $H$ and $V$ are formal; on the other side $V$, $H$ and $H^0\oplus H^1$ have isomorphic  
Kuranishi algebras and the conclusion follows by Theorem~\ref{thm.4sedici}.
\end{proof}

\subsection{Equivariant formality} Let $(g,v)\to gv$ be a representation of a group $G$ on a graded vector space $V$. %There is an induced diagonal action of $G$ on $S^c(V)$ given by $g(v_1\odot\cdots\odot v_n)=gv_1\odot\cdots\odot gv_n$.

\begin{definition} In the above setup, an $L_\infty[1]$ algebra structure $Q$ on $V$ is \emph{$G$-equivariant} if so are its Taylor coefficients, i.e.,
	\[ gq_n(v_1,\ldots,v_n)=q_n(gv_1,\ldots,gv_n),\qquad\forall\,g\in G,\, n\ge1,\,v_1,\ldots,v_n\in V. \]
	Similarly, given a second $G$-equivariant $L_\infty[1]$ algebra $(W,R)$, an $L_\infty[1]$ morphism $F:(V,Q)\dashrightarrow(W,R)$ is $G$-equivariant if
	\[ gf_n(v_1,\ldots,v_n)=f_n(gv_1,\ldots,gv_n),\qquad\forall\,g\in G,\, n\ge1,\,v_1,\ldots,v_n\in V. \]
\end{definition}

Given a $G$-equivariant $L_\infty[1]$ algebra $(V,Q)$, the $G$-action descends on the tangent cohomology $H(V,q_1)$, and the induced bracket $r_2\colon H(V,q_1)^{\odot2}\to H(V,q_1)$, as in Remark \ref{rem: r2}, is $G$-equivariant. We say that $(V,Q)$ is \emph{$G$-equivariantly formal} if there exists a $G$-equivariant weak equivalence $(H(V,q_1),r_2)\dashrightarrow(V,Q)$. The aim of this subsection is to prove that under some mild assumptions a $G$-equivariant $L_\infty[1]$ algebra is $G$-equivariantly formal if and only if it is formal in the usual sense.
\begin{theorem}\label{th:Gformality} Let $(V,Q)$ be a $G$-equivariant $L_\infty[1]$ algebra. Assume that the following $G$-modules are semisimple:\begin{itemize}
		\item $V^i$, for all $i\in\Z$; and\item $\Hom^j(H(V,q_1)^{\odot n}, H(V,q_1) )$ for $j=0,1$ and all $n\ge2$.
	\end{itemize}
	Then, if $V$ is formal it is also $G$-equivariantly formal.
\end{theorem}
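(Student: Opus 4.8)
The plan is to upgrade the three ingredients behind ordinary formality---choice of a Hodge decomposition, homotopy transfer, and a gauge transformation killing the higher brackets---to $G$-equivariant versions, using semisimplicity to average all the auxiliary choices.

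First I would produce a $G$-equivariant minimal model. Since each $V^i$ is a semisimple $G$-module and the linear part $q_1$ is $G$-equivariant, the submodules $B^i(V)\subset Z^i(V)\subset V^i$ admit $G$-invariant complements, so one may choose a $G$-equivariant Hodge decomposition $V^i=B^i(V)\oplus H^i\oplus W^i$ as in Remark~\ref{rem:hodge}. This gives a $G$-equivariant contraction of $(V,q_1)$ onto $(H,0)$, where $H=H(V,q_1)$ carries the induced $G$-action. Applying homotopy transfer (Theorem~\ref{th: transfer}) along this contraction produces a minimal structure $(H,R)$, $R=(0,r_2,r_3,\ldots)$, together with weak equivalences $F\colon(H,R)\dashrightarrow(V,Q)$ and $G\colon(V,Q)\dashrightarrow(H,R)$. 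Because the transferred Taylor coefficients $r_n,f_n$ are universal expressions in the $G$-equivariant data $q_n$ and the $G$-equivariant contraction, the whole package $(H,R),F,G$ is $G$-equivariant, and $r_2$ is the canonical bracket of Remark~\ref{rem: r2}. It thus suffices to prove that the $G$-equivariant minimal algebra $(H,R)$ is $G$-equivariantly isomorphic to $(H,r_2)$; composing such an isomorphism with $F$ yields the desired $G$-equivariant weak equivalence $(H(V,q_1),r_2)\dashrightarrow(V,Q)$.

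Next I would translate the problem into equivariant Maurer--Cartan theory. Since $V$ is formal, by uniqueness of minimal models $(H,R)\cong(H,r_2)$; precomposing with the strict isomorphism given by the linear part (a Lie automorphism of $(H[-1],r_2)$) we may take this isomorphism to have identity linear part. Under the standard dictionary, $L_\infty[1]$ structures on $H$ with vanishing linear part and fixed quadratic part $r_2$ correspond to Maurer--Cartan elements of the pronilpotent DG-Lie algebra $\mathfrak{g}=\prod_{n\ge3}\Hom^{\ast}(H^{\odot n},H)$ with differential $[r_2,-]_{NR}$, filtered by arity and complete; isomorphisms with identity linear part correspond to the gauge action of $\exp(\mathfrak{g}^0)$. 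In this picture $R$ is the Maurer--Cartan element $\xi=(r_3,r_4,\ldots)$, which lies in $(\mathfrak{g}^1)^G$ because $R$ is $G$-equivariant, and formality says exactly that $\xi$ is gauge equivalent to $0$.

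Finally, and this is where I expect the real work to lie, I would make the gauge equivalence $G$-equivariant. The algebra $\mathfrak{g}$ is a $G$-DG-Lie algebra whose arity-$n$ pieces in degrees $0$ and $1$ are the modules $\Hom^{0}(H^{\odot n},H)$ and $\Hom^{1}(H^{\odot n},H)$, which are semisimple by hypothesis; hence in each arity the Reynolds projection $\rho$ onto $G$-invariants is defined and, being projection onto the trivial isotypic component, commutes with the $G$-equivariant differential $[r_2,-]_{NR}\colon\mathfrak{g}^0\to\mathfrak{g}^1$. I would then run the usual arity-by-arity construction of a gauge $\mu$ with $e^{\mu}\ast 0=\xi$: given a $G$-invariant partial solution $\mu^{(k)}\in(\mathfrak{g}^0)^G$ agreeing with $\xi$ up to arity $k$, the discrepancy $\xi-e^{\mu^{(k)}}\ast 0$ is a $G$-invariant $[r_2,-]_{NR}$-exact element in the next arity; choosing any primitive $\zeta$ and replacing it by $\rho(\zeta)\in(\mathfrak{g}^0)^G$ (still a primitive, by the intertwining property of $\rho$) corrects $\mu^{(k)}$ to a $G$-invariant $\mu^{(k+1)}$. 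Completeness of $\mathfrak{g}$ guarantees convergence to $\mu\in(\mathfrak{g}^0)^G$, i.e. to a $G$-equivariant isomorphism $(H,r_2)\dashrightarrow(H,R)$. The main obstacle is precisely this equivariant gauge-fixing: one must check that the Reynolds operator is a chain map between the two semisimple modules entering each arity---this is the sole role of the semisimplicity assumptions for $j=0$ and $j=1$---and that only a single graded piece occurs at each stage, so that semisimplicity of the individual components suffices in spite of the infinite product defining $\mathfrak{g}$.
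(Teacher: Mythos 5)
Your overall strategy coincides with the paper's: step one (the $G$-invariant Hodge decomposition plus equivariant homotopy transfer, Theorem~\ref{th: transfer} and Remark~\ref{rem:hodge}) is identical, and your arity-by-arity Reynolds-averaged correction is exactly the paper's induction (conjugation by the automorphisms $e^{\widehat{\alpha}_{k-1}}$) rewritten in Maurer--Cartan/gauge language. However, as written your second step has two concrete problems. The first is an arity bookkeeping error: you set $\mathfrak{g}=\prod_{n\ge3}\Hom^{\ast}(H^{\odot n},H)$ and claim that isomorphisms with identity linear part correspond to $\exp(\mathfrak{g}^0)$. They do not: such isomorphisms correspond to $\exp$ of $\prod_{n\ge2}\Hom^0(H^{\odot n},H)$, and indeed the corrector killing the first possibly nonzero bracket $r_3$ is a degree-zero map $H^{\odot 2}\to H$ of arity two, which lies outside your $\mathfrak{g}^0$. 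With your literal definitions the induction cannot even start when $r_3\neq 0$. Compare the paper, where Lemma~\ref{lem:man4} produces $\alpha_{k-1}\colon H^{\odot k-1}\to H$ for $k\ge3$ and the final composition $\cdots\circ e^{\widehat{\alpha}_3}\circ e^{\widehat{\alpha}_2}$ begins with an arity-two coderivation. The fix is easy (enlarge the degree-zero part of the gauge algebra to arities $\ge2$; conjugation by such elements still preserves the set of MC elements with quadratic part exactly $r_2$, and semisimplicity of $\Hom^0(H^{\odot n},H)$ is assumed for all $n\ge2$), but it must be made.

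The second problem is the genuine gap: the pivotal claim that the discrepancy $\xi-e^{\mu^{(k)}}\ast 0$ is $[r_2,-]_{NR}$-exact in arity $k+1$ is asserted, not proved, and the statement it superficially resembles is false. For two arbitrary gauge-equivalent MC elements agreeing up to arity $k$ one only gets that the arity-$(k+1)$ discrepancy is \emph{closed}: e.g.\ if $\eta=\eta_3$ is MC and $\nu_2$ is closed of arity two, then $e^{\nu_2}\ast\eta-\eta=[\nu_2,\eta_3]+\cdots$, and a bracket of closed elements need not be exact. What rescues your induction is that \emph{both} elements are gauge-trivial: since $\xi=e^{\lambda}\ast 0$, the element $e^{-\mu^{(k)}}\ast\xi=e^{\sigma}\ast 0$ (with $e^{\sigma}=e^{-\mu^{(k)}}e^{\lambda}$) is a gauge-trivial MC element vanishing up to arity $k$, and expanding $e^{\operatorname{ad}\sigma}(r_2)-r_2$ one shows by induction on arity that all $\sigma_i$ with $i\le k-1$ are closed, whence the arity-$(k+1)$ component equals $\pm d\sigma_k$, i.e.\ is exact; finally this component coincides with your discrepancy, because $\xi-e^{\mu^{(k)}}\ast 0=e^{\operatorname{ad}\mu^{(k)}}\bigl(e^{-\mu^{(k)}}\ast\xi\bigr)$ and $e^{\operatorname{ad}\mu^{(k)}}$ raises arity. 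This computation is precisely the content the paper outsources to \cite[Lemma 6.2]{Man4} via Lemma~\ref{lem:man4}; it is where formality actually enters, and it, rather than the Reynolds-operator chain-map property (the easy commutative square in the paper), is the real work. Your proposal is sound in outline but incomplete exactly at this step: either supply the argument above or cite \cite[Lemma 6.2]{Man4} as the paper does.
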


\begin{remark}\label{rem:Gformality} For instance, the hypotheses of the theorem are automatically satisfied in the following situations:\begin{itemize} \item when $G$ is finite;\item when $G$ is a linearly reductive algebraic group and $V^i$ is a finitely supported rational $G$-module for all $i\in\Z$;
		\item when $G$ is a linearly reductive algebraic group, $V^i$ is a rational $G$-module for all $i\in\Z$ and moreover $\dim H(V,q_1)=\sum_{i\in\Z}\dim\,H^i(V,q_1) <+\infty$.
	\end{itemize}
	The first item follows from the well known fact that any representation of a finite group is semisimple. The second and third items from the fact that, since $G$ is assumed linearly reductive, any rational representation of $G$ is semisimple, and the class of rational representations (as well as the subclass of finitely supported rational representations) is closed under subobjects and quotients, see Remark~\ref{rmk.subobjectquotient}.  Moreover, the class of finite dimensional rational representations is further closed under tensor products and Hom spaces, and Lemma~\ref{lemma.finsup} shows that the same is true for the class of finitely supported rational representations.
\end{remark}

\begin{proof} (of Theorem \ref{th:Gformality}.) The proof is broken into two steps: the first step is to show the existence of a $G$-equivariant minimal model of $(V,Q)$. The assumption that $V^i$ is semisimple, $\forall\,i\in\Z$, assures the existence of a $G$-invariant abstract Hodge decomposition of the complex $(V,q_1)$ (cf. with Remark \ref{rem:hodge}), that is, direct sum decompositions $Z^i(V)=B^i(V)\oplus H^i$, $V^i=Z^i(V)\oplus W^i=B^i(V)\oplus H^i\oplus W^i$ such that $H^i,W^i\subset V^i$ are $G$-invariant subspaces. It follows that $H=\oplus_{i\in\Z} H^i$ is a $G$-module, and the induced contraction $(f,g,h)$ of $(V,q_1)$ onto $(H,0)$, as in Remark \ref{rem:hodge}, is $G$-equivariant. It is now a straightforward consequence of the explicit recursive formulas for homotopy transfer (for which, once again, we may refer to \cite[Theorem 1.9]{descent}) that the induced minimal $L_\infty[1]$ algebra structure $R$ on $H$ and the induced $L_\infty[1]$ weak equivalences $F:(H,,R)\dashrightarrow(V,Q)$ and $G:(V,Q)\dashrightarrow(H,R)$ are all $G$-equivariant. 
	
	The second step is to prove that if $(V,Q)$ is formal, then $(H,R)$ constructed as in the previous paragraph is $G$-equivariantly $L_\infty[1]$ isomorphic to $(H,r_2)$: recall that formality of $(V,Q)$ is equivalent to the existence of an $L_\infty[1]$ isomorphism $(H,R)\dashrightarrow(H,r_2)$ with linear part the identity, but a priori the latter might not be $G$-equivariant. We reason inductively, following closely the proof of \cite[Theorem 6.3]{Man4}: the inductive step will depend on the following key lemma, which is a $G$-equivariant version of \cite[Lemma 6.2]{Man4}.
	
	\begin{lemma}\label{lem:man4} Given $k\geq3$ and a minimal $G$-equivariant $L_\infty[1]$ algebra of the form $(H,R')=(H,0,r'_2,0,\ldots,0,r'_k,r'_{k+1},\ldots)$, if $(H,R')$ is formal in the usual sense and the $G$-module $\Hom^j(H^{\odot n}, H)$ is semisimple  for $j=0,1$ and $n=k-1,k$, then there exists a degree zero $G$-equivariant map $\alpha_{k-1}:H^{\odot k-1}\to H$ such that $r'_k=[r'_2,\alpha_{k-1}]_{NR}$.
	\end{lemma}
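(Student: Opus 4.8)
The plan is to recast the lemma as an equivariant lifting problem for the Chevalley--Eilenberg differential attached to $r'_2$. On the graded Lie algebra $\prod_{n\ge0}\Hom(H^{\odot n},H)$ with the Nijenhuis--Richardson bracket, set $d:=[r'_2,-]_{NR}$. By Remark~\ref{rem: r2} (applied to the minimal algebra $(H,R')$, where $q_1=0$ so $H$ is its own tangent cohomology) we have $[r'_2,r'_2]_{NR}=0$, so $d$ is a differential. Since $r'_2$ is $G$-equivariant and the correspondence between coderivations and their Taylor coefficients is an isomorphism of graded Lie algebras, the $G$-action is compatible with $[-,-]_{NR}$ and hence
\[ d\colon \Hom^0(H^{\odot k-1},H)\longrightarrow \Hom^1(H^{\odot k},H) \]
is a morphism of $G$-modules. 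Moreover, because $(H,R')$ is $G$-equivariant, its Taylor coefficient $r'_k$ is a $G$-invariant vector of $\Hom^1(H^{\odot k},H)$. Thus the conclusion $r'_k=[r'_2,\alpha_{k-1}]_{NR}$ with $\alpha_{k-1}$ $G$-equivariant is precisely the assertion that the $G$-invariant vector $r'_k$ admits a $G$-invariant $d$-primitive.

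First I would establish, ignoring equivariance, that $r'_k\in\operatorname{im}(d)$; this is exactly the non-equivariant statement \cite[Lemma 6.2]{Man4}. For completeness it can be recovered directly: a formality isomorphism, which we may take from $(H,r'_2)$ to $(H,R')$ with linear part the identity, corresponds under corestriction to a degree-zero element $\alpha=\sum_{p\ge2}\alpha_p$ with $e^{[\alpha,-]_{NR}}(r'_2)=R'$. Comparing arities strictly below $k$, where $R'$ vanishes, forces $[\alpha_p,r'_2]_{NR}=0$ for all $p\le k-2$ by an immediate induction; consequently every iterated-bracket contribution to the arity-$k$ component of $e^{[\alpha,-]_{NR}}(r'_2)$ vanishes, leaving $r'_k=[\alpha_{k-1},r'_2]_{NR}$. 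Either way one concludes $r'_k\in\operatorname{im}(d)$.

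The remaining and genuinely new point is to upgrade this to a $G$-equivariant primitive, and here I would use the hypothesis that $\Hom^0(H^{\odot k-1},H)$ is a semisimple $G$-module. The $G$-submodule $\ker d$ then admits a $G$-invariant complement $A'$, and $d$ restricts to an isomorphism of $G$-modules $d|_{A'}\colon A'\xrightarrow{\ \sim\ }\operatorname{im}(d)$. As $r'_k$ is $G$-invariant and lies in $\operatorname{im}(d)$, its unique preimage $\alpha_{k-1}:=(d|_{A'})^{-1}(r'_k)\in A'$ is again $G$-invariant, since an isomorphism of $G$-modules carries invariants to invariants. Hence $\alpha_{k-1}$ is $G$-equivariant as a multilinear map and satisfies $d\alpha_{k-1}=r'_k$, that is $r'_k=[r'_2,\alpha_{k-1}]_{NR}$, as required.

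The only real obstacle is this passage from an arbitrary primitive to a $G$-equivariant one, and the entire weight of the argument rests on semisimplicity, which furnishes the $G$-equivariant splitting of $\ker d$; without it a $G$-invariant element of $\operatorname{im}(d)$ need not admit any $G$-invariant primitive. I expect that only the case $j=0$, $n=k-1$ of the semisimplicity hypothesis is strictly needed for this lifting, the remaining cases ($j=1$, and $n=k$) serving to keep the surrounding inductive construction of Theorem~\ref{th:Gformality} within the class of semisimple modules.
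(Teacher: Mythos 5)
Your proof is correct, and its skeleton is the same as the paper's: both arguments take the non-equivariant primitive $\widetilde{\alpha}_{k-1}$ with $[r'_2,\widetilde{\alpha}_{k-1}]_{NR}=r'_k$ from \cite[Lemma 6.2]{Man4}, observe that $d:=[r'_2,-]_{NR}\colon\Hom^0(H^{\odot k-1},H)\to\Hom^1(H^{\odot k},H)$ is a morphism of $G$-modules because $r'_2$ is equivariant, and then use semisimplicity to replace $\widetilde{\alpha}_{k-1}$ by an equivariant primitive. Where you genuinely diverge is in the mechanism of this last step. The paper invokes Reynolds operators $R$ (equivariant projectors onto invariants, which exist by semisimplicity) on \emph{both} the source and the target of $d$, and uses their naturality with respect to $G$-module maps to compute $r'_k=R(r'_k)=R(d\widetilde{\alpha}_{k-1})=d\bigl(R(\widetilde{\alpha}_{k-1})\bigr)$, setting $\alpha_{k-1}=R(\widetilde{\alpha}_{k-1})$; this consumes the semisimplicity hypothesis for $(j,n)=(0,k-1)$ and $(j,n)=(1,k)$. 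You instead split off a $G$-invariant complement $A'$ of $\ker d$ in the source and invert the $G$-module isomorphism $d|_{A'}\colon A'\to\operatorname{im}(d)$ on the invariant vector $r'_k$; as you correctly observe, this uses only the semisimplicity of $\Hom^0(H^{\odot k-1},H)$, so your variant proves the lemma under a weaker hypothesis (the surplus cases in the statement are there because the paper's naturality argument needs them, not because the induction in Theorem~\ref{th:Gformality} does). The two mechanisms are of course interchangeable consequences of semisimplicity, and the economy makes no difference in the application, where semisimplicity is assumed for all $j=0,1$ and $n\ge2$. Your optional self-contained derivation of the non-equivariant input—writing a formality isomorphism with linear part the identity as $e^{[\alpha,-]_{NR}}$ and comparing arity components below and at $k$—is also sound, granted the standard characteristic-zero fact that tangent-to-identity coalgebra automorphisms of $S^c(H)$ are exponentials of coderivations; the only cosmetic point is the sign $[\alpha_{k-1},r'_2]_{NR}=-[r'_2,\alpha_{k-1}]_{NR}$ (as $r'_2$ has degree $1$ and $\alpha_{k-1}$ degree $0$), which is absorbed by replacing $\alpha_{k-1}$ with $-\alpha_{k-1}$.
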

	\begin{proof} According to \cite[Lemma 6.2 (3)]{Man4} there exists a not necessarily $G$-equivariant map $\widetilde{\alpha}_{k-1}:H^{\odot k-1}\to H$ having the desired property $[r'_2,\widetilde{\alpha}_{k-1}]_{NR}=r'_k$. Furthermore, the hypothesis that $\Hom^j(H^{\odot n},H)$ is semisimple ($j=0,1$, $n=k-1,k$) implies the existence of \emph{Reynolds operators} (cf. \cite[\S 6.2.4]{procesi}), that is, projectors $R\colon \Hom^j(H^{\odot n},H)\twoheadrightarrow\Hom^j(H^{\odot n}, H)^G$ onto the invariants, which are natural with respect to maps of $G$-modules (see \cite[\S 6.2.3, Proposition 2]{procesi}). Finally, we notice that the Nijenhuis-Richardson bracket $[-,-]_{NR}$ is equivariant with respect to the natural action of $G$ on $\Hom^j(H^{\odot n}, H)$ given by $(gq_n)(h_1,\ldots,h_n)= gq_n(g^{-1}h_1,\ldots,g^{-1}h_n)$. Since $r'_2$ is supposed to be $G$-equivariant, $[r'_2,-]_{NR}:\Hom^0(H^{\odot k-1},H)\to \Hom^1(H^{\odot k},H)$ is a morphism of $G$-modules, and then the diagram
		\[ \xymatrix{ \Hom^0(H^{\odot k-1},H)\ar[d]_{[r'_2,-]_{NR}}\ar[r]^-R & \Hom^0(H^{\odot k-1},H)^G\ar[d]_{[r'_2,-]_{NR}} \\ \Hom^1(H^{\odot k},H)\ar[r]^-R & \Hom^1(H^{\odot k},H)^G}   \]
		is commutative. In particular, since $r'_k$ is also supposed to be $G$-equivariant, we have
		\[ r'_k = R(r'_k) = R([r'_2,\widetilde{\alpha}_{k-1}]_{NR}) = [r'_2, R(\widetilde{\alpha}_{k-1})]_{NR}, \]
		and the proof is concluded by setting $\alpha_{k-1}=R(\widetilde{\alpha}_{k-1})$.
	\end{proof}
	
	Going back to the proof of the theorem, given $k\geq3$ we assume inductively to have constructed a minimal $L_\infty[1]$ algebra $(H,R')=(H,0,r'_2,0,\ldots,0,r'_{k},r'_{k+1},\ldots)$ as in the hypotheses of the previous lemma and a $G$-equivariant $L_\infty[1]$-isomorphism $F':(H,R)\to(H,R')$ with linear part the identity. In particular, this implies $r'_2=r_2$. The basis of the induction is $k=3$, $R'=R$ and $(f'_1,f'_2,\ldots,f'_n,\ldots)=(\id_H,0,\ldots,0,\ldots)$. Since $(H,R)$ is supposed to be formal, so is $(H,R')$, and we can find $\alpha_{k-1}$ as in Lemma \ref{lem:man4}. As in the proof of \cite[Theorem 6.3]{Man4}, we denote by $\widehat{\alpha}_{k-1}$ the $G$-equivariant coderivation $\widehat{\alpha}_{k-1}=(0,0,\ldots,0,\alpha_{k-1},0,\ldots)$: its exponential $e^{\widehat{\alpha}_{k-1}}$ is a well defined $G$-equivariant automorphism of the symmetric coalgebra $S^c(H)$, acting as the identity on $\oplus_{i<k-1}H^{\odot i}\subset S^c(H)$. Finally, we put $R'' = e^{\widehat{\alpha}_{k-1}}\circ R'\circ e^{-\widehat{\alpha}_{k-1}}$ and $F''=e^{\widehat{\alpha}_{k-1}}\circ F'$. It is clear that $F'':(H,R)\to(H,R'')$ is an $L_\infty[1]$ isomorphism with linear part the identity, and the same computations as in the proof of \cite[Theorem 6.3]{Man4} show that $R''$ has the form $R''=(0,r_2,0,\ldots,0,r''_{k+1},r''_{k+2},\ldots)$: moreover, since $R'$, $F'$ and $e^{\widehat{\alpha}_{k-1}}$ are $G$-equivariant, so are $R''$ and $F''$, and we can proceed with the induction.
	
	In order to conclude, it is sufficient to observe that the infinite composition 
	\[ F:=\cdots\circ e^{\widehat{\alpha}_{k-1}}\circ\cdots\circ e^{\widehat{\alpha}_{3}}\circ e^{\widehat{\alpha}_{2}}\colon S^c(H)\to S^c(H) \]
	is well defined, since $e^{\widehat{\alpha}_{k-1}}$ acts as the identity on $\oplus_{i<k-1}H^{\odot i}\subset S^c(H)$, and by construction $F$ is an $L_\infty[1]$ isomorphism 
$F\colon(H,0,r_2,r_3,\ldots,r_n,\ldots)\dashrightarrow(H,0,r_2,0,\ldots,0,\ldots)$ with linear part the identity.
\end{proof}

\begin{corollary}\label{cor.Gformality} 
Let $(L,d,[-,-])$ be a DG Lie algebra. Assume the following hypotheses:\begin{itemize}
		\item $H^i(L,d)=0$ for $i<0$ and $H^0(L,d)=:\mathfrak{g}$ is the Lie algebra of a reductive algebraic group $G$;
		\item there is a Lie algebra embedding $\imath\colon\mathfrak{g}\hookrightarrow Z^0(L)$ that is a section of the natural projection map $Z^0(L)\to H^0(L,d)$;
		\item the adjoint action of $\mathfrak{g}$ on $L$ is induced by an action of $G$ by DG Lie algebra automorphisms which is degreewise rational and finitely supported.
	\end{itemize}
	Define a DG Lie subalgebra $K\subset L$ by setting $K^i=0$ for $i\leq0$, $K^1\subset L^1$ a $\mathfrak{g}$-invariant complement of $B^1(L)$ in $L^1$ (this exists because the hypotheses imply that $L^i$ is a semisimple $\mathfrak{g}$-module for all $i\in\Z$) and finally $K^i=L^i$ for $i\geq2$. Then, the DG Lie algebra $L$ is formal if and only if so is the one $K$.
\end{corollary}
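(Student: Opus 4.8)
The plan is to realize $L$, up to quasi-isomorphism, as the semidirect product $\mathfrak{g}\ltimes K$, and then to compare the formality of $\mathfrak{g}\ltimes K$ with that of $K$ by playing off the grading against the $G$-equivariant formality of Theorem~\ref{th:Gformality}. First I would form the abstract semidirect product DG Lie algebra $\mathfrak{g}\ltimes K$, where $\mathfrak{g}$ sits in degree $0$ with zero differential and acts on $K$ through the restriction of the adjoint action $[\imath(-),-]$. This action preserves $K$ precisely because $K^1$ is $\mathfrak{g}$-invariant; since $B^1(L)$ is a $G$-submodule of the semisimple $G$-module $L^1$, I may and will choose $K^1$ to be $G$-invariant. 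The map $j\colon\mathfrak{g}\ltimes K\to L$, $(x,k)\mapsto \imath(x)+k$, is then a morphism of DG Lie algebras because $\imath$ is a Lie algebra section, and a direct cohomology computation shows it is a quasi-isomorphism: it is the identity on $H^0=\mathfrak{g}$; it identifies $H^1(\mathfrak{g}\ltimes K)=Z^1(L)\cap K^1$ with $H^1(L)=Z^1(L)/B^1(L)$ via the complement property $L^1=B^1(L)\oplus K^1$; and it is an isomorphism in degrees $\ge 2$, where $K$ and $L$ coincide. Since formality is invariant under quasi-isomorphism, it suffices to prove that $\mathfrak{g}\ltimes K$ is formal if and only if $K$ is.

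For the implication ``$\mathfrak{g}\ltimes K$ formal $\Rightarrow$ $K$ formal'' I would argue by a pure degree count, with no appeal to equivariance. Fix a minimal model $(\mathfrak{g}\oplus H(K),R)$ of $\mathfrak{g}\ltimes K$ together with its weak equivalence $F$. In the $L_\infty[1]$ grading the summand $\mathfrak{g}$ lives in degree $-1$, while $H(K)=H^{\ge 1}(L)$ lives in degrees $\ge 0$; since every Taylor coefficient is degree preserving, feeding only inputs from $H(K)$ into any $R_n$ or $F_n$ produces an output of degree $\ge 0$, hence again in the summand $H(K)$. Thus $H(K)$ is closed under the transferred structure and under $F$, so $R|_{H(K)}$ is a minimal model of $K$. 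Formality of $\mathfrak{g}\ltimes K$ provides an $L_\infty[1]$ isomorphism $\Psi$ onto $(\mathfrak{g}\oplus H(K),r_2)$ with linear part the identity; restricting $\Psi$ to $H(K)$ by the same degree count yields an isomorphism $(H(K),R|_{H(K)})\to (H(K),r_2^K)$ with linear part the identity, which is exactly the formality of $K$.

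For the converse ``$K$ formal $\Rightarrow$ $\mathfrak{g}\ltimes K$ formal'' the grading no longer suffices and reductivity enters. Here I would first check that $K$ satisfies the hypotheses of Theorem~\ref{th:Gformality}: each $K^i$ is a rational finitely supported $G$-module (a $G$-submodule of $L^i$, or equal to it), and by Lemma~\ref{lemma.finsup} and Remark~\ref{rem:Gformality} the spaces $\Hom^j(H(K)^{\odot n},H(K))$ are semisimple. Hence formality of $K$ upgrades to $G$-equivariant formality, i.e. a $G$-equivariant weak equivalence $\Phi\colon (H(K),r_2^K)\dashrightarrow K$. Since the $\mathfrak{g}$-module structure on $H(K)$ is the differential of the $G$-action and $\Phi$ is $G$-equivariant, $\Phi$ is in particular $\mathfrak{g}$-equivariant, so I can form the induced semidirect-product morphism $\mathrm{id}_{\mathfrak{g}}\ltimes\Phi\colon \mathfrak{g}\ltimes (H(K),r_2^K)\dashrightarrow \mathfrak{g}\ltimes K$. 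Its source is a graded Lie algebra with zero differential, canonically equal to $(H(\mathfrak{g}\ltimes K),r_2)$, and its linear part is a quasi-isomorphism; therefore $\mathrm{id}_{\mathfrak{g}}\ltimes\Phi$ is a weak equivalence witnessing the formality of $\mathfrak{g}\ltimes K$.

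The step I expect to require the most care is the construction underlying $\mathrm{id}_{\mathfrak{g}}\ltimes\Phi$: namely that, for a $\mathfrak{g}$-equivariant $L_\infty[1]$ algebra $V$, the graded space $\mathfrak{g}\oplus V$ carries a genuine $L_\infty[1]$ semidirect-product structure (the $\mathfrak{g}$-action being encoded by the quadratic bracket, with no higher mixed brackets), and that a $\mathfrak{g}$-equivariant $L_\infty[1]$ morphism $\Phi\colon V\dashrightarrow W$ extends to such semidirect products by setting all mixed higher Taylor coefficients to zero. Verifying the $L_\infty[1]$ relations for $\mathfrak{g}\oplus V$ and for $\mathrm{id}\ltimes\Phi$ reduces, on the mixed components, precisely to the facts that $\mathfrak{g}$ acts by derivations of every bracket of $V$ and that the $\Phi_n$ are $\mathfrak{g}$-equivariant—both guaranteed by equivariance—but the attendant sign and symmetric-coalgebra bookkeeping is the genuinely technical point. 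Everything else (the quasi-isomorphism $j$, the degree count of the second paragraph, and the verification of the hypotheses of Theorem~\ref{th:Gformality}) is routine, the real content being supplied by Theorem~\ref{th:Gformality} itself.
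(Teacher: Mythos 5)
Your proposal is correct and follows essentially the same route as the paper: the paper works with the DG Lie subalgebra $M=\imath(\mathfrak{g})\oplus K\subset L$, which is exactly your semidirect product $\mathfrak{g}\ltimes K$ under $(x,k)\mapsto\imath(x)+k$, proves the implication ``$L$ formal $\Rightarrow$ $K$ formal'' by the same degree count, and proves the converse by upgrading formality of $K$ to $G$-equivariant formality via Theorem~\ref{th:Gformality} and then extending the equivariant weak equivalence to $\mathfrak{g}\oplus H(K,d)$ with all mixed Taylor coefficients set to zero. The verification you flag as the technical point is carried out in the paper exactly along the lines you predict: the mixed $L_\infty[1]$ relations reduce to $\imath$ being a Lie algebra morphism and to the $\mathfrak{g}$-equivariance of the transferred Taylor coefficients, the latter following from their $G$-equivariance.
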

\begin{proof} We introduce a second DG Lie subalgebra $M:=\imath(\mathfrak{g})\oplus K\subset L$: clearly the inclusion $M\hookrightarrow L$ is a quasi-isomorphism, thus $L$ is formal if and only if so is $M$, and we need to prove that this is the case if and only if so is $K$.
	
We denote the $L_\infty[1]$ algebras corresponding to $M$ and $K$ respectively by \[ (M[1],Q)=(L[1],q_1,q_2,0,\ldots,0,\ldots)\qquad\mbox{and}\qquad(K[1],Q)=(K[1],q_1,q_2,0,\ldots,0,\ldots),\] 
and by $(H(M,d)[1],r_2)$ and $(H(K,d)[1],r_2)$ the $L_\infty[1]$ algebras corresponding to their cohomology graded Lie algebras (explicitly, if $M\to M[1]: x\to s^{-1}x$ is the desuspension isomorphism and $|x|$ is the degree of $x$ in $M$, then $q_1(s^{-1}x)=-s^{-1}dx$, $q_2(s^{-1}x\odot s^{-1}y)=(-1)^{|x|}s^{-1}[x,y]$: similarly for $r_2$). We notice that by construction $H(K,d)=H^{>0}(M,d)=\oplus_{i>0} H^i(M,d)$.

If $M$ is formal and $F\colon (H(M,d)[1],r_2)\dashrightarrow (M[1],Q)$ is an $L_\infty[1]$ weak equivalence, for trivial degree reasons when $x_1,\ldots,x_n\in H(K,d)[1]=H^{>0}(M,d)[1]\subset H(M,d)[1]$ then $f_n(x_1,\ldots,x_n)\in M^{>0}[1]=K[1]$, that is, $F$ restricts to an $L_\infty[1]$ weak equivalence 
$F\colon(H(K,d)[1],r_2)\dashrightarrow(K[1],Q)$ and $K$ is formal.  
	
Conversely, we assume that $K$ is formal. By the hypotheses, $(K[1],Q)$ is a $G$-equivariant $L_\infty[1]$ algebra satisfying the assumptions of Theorem \ref{th:Gformality} (see also Remark \ref{rem:Gformality}), hence it is also $G$-equivariantly formal. Moreover, we fix a $G$-equivariant $L_\infty[1]$ weak equivalence $\widetilde{F}:(H(K,d)[1],r_2)\dashrightarrow(K[1],Q)$, and then we can define an $L_\infty[1]$ weak equivalence $F:(H(M,d)[1],r_2)\dashrightarrow(M[1],Q)$ as follows. Using the natural identification $H(M,d)\cong\mathfrak{g}\oplus H(K,d)$, we put $f_1(x)=\widetilde{f}_1(x)$ if $x\in H(K,d)[1]\subset H(M,d)[1]$ and $f_1(x)=\imath(x)$ if $x\in\mathfrak{g}[1]\subset H(M,d)[1]$: for $n\geq2$, we put $f_n(x_1,\ldots,x_n)=\widetilde{f}_n(x_1,\ldots,x_n)$ if $x_1,\ldots,x_n\in H(K,d)[1]\subset H(M,d)[1]$, and $f_n(x_1,\ldots,x_n)=0$ otherwise. It is clear that $f_1$ is a quasi-isomorphism. The remaining relations that have to be satisfied in order for $F$ to define an $L_\infty[1]$ morphism read
	\begin{multline}\label{eq:Loomorp} q_1f_n(x_1,\ldots,x_n) +\frac{1}{2}\sum_{i=1}^{n-1}\sum_{\sigma\in S(i,n-i)}\pm q_2(f_i(x_{\sigma(1)},\ldots,x_{\sigma(i)}),f_{n-i}(x_{\sigma(i+1)},\ldots,x_{\sigma(n)})) = \\ \sum_{\sigma\in S(2,n-2)}\pm f_{n-1}(r_2(x_{\sigma(1)},x_{\sigma(2)}),x_{\sigma(3)},\ldots,x_{\sigma(n)})
	\end{multline}
	for all $n\geq2$ and $x_1,\ldots,x_n\in H(M,d)[1]$, where $\pm$ is the appropriate Koszul sign and $S(i,n-i)$ is the set of $(i,n-i)$-unshuffles (i.e., permutations $\sigma\in S_n$ such that $\sigma(j)<\sigma(j+1)$ for $j\neq i$). When $x_1,\ldots,x_n\in H(K,d)[1]\subset H(M,d)[1]$ these relations follow from the corresponding ones for the $\widetilde{f}_n$, whereas both sides of \eqref{eq:Loomorp} are obviously zero whenever $x_i,x_j\in\mathfrak{g}[1]\subset H(M,d)[1]$ for some $i\neq j$ and $n\geq3$. For $n=2$ and $x_1,x_2\in\mathfrak{g}[1]$ \eqref{eq:Loomorp} follows from the fact that 
	$\imath\colon \mathfrak{g}\to Z^0(L)$ is a morphism of Lie algebras. In the remaining case, it is not restrictive to assume $x_1\in\mathfrak{g}[1]$ and $x_2,\ldots,x_n\in H(K,d)[1]$: in this situation \eqref{eq:Loomorp} reduces to
	\[ q_2(\imath(x_1),\widetilde{f}_{n-1}(x_2,\ldots,x_n)) = \sum_{j=2}^{n}\pm\widetilde{f}_{n-1}(r_2(x_1,x_j),x_2,\ldots,\widehat{x_j},\ldots,x_n), \]
	which says that the Taylor coefficients 
	$\widetilde{f}_{n-1}\colon H(K,d)[1]^{\odot n-1}\to K[1]$ are $\mathfrak{g}$-equivariant maps. This is clear, since by assumption the $\mathfrak{g}$-module structures are induced by $G$-module structures and the $\widetilde{f}_{n-1}$ are $G$-equivariant.
\end{proof}

\noindent\textbf{Proof of Theorem~\ref{thm.maintheorem2}.}
We only need to prove that if the Kuranishi family of $\sF$ is quadratic then $\RHom(\sF,\sF)$ is formal, since the converse is true by general theory.

In the situation of Corollary~\ref{cor.Gformality}, the DG-Lie algebras $L$ and $K$ have the same Kuranishi family 
\cite{GoMil2,ManRendiconti,NijRich64}.  
If $H^i(L)=0$ for every $i\not=0,1,2$, then $H^i(K)=0$ for every $i\not=1,2$. Therefore by 
Corollary~\ref{cor.Gformality} $L$ is formal if and only if so is  $K$, 
while by  
Corollary~\ref{cor.4sedici} $K$ is formal if  its  Kuranishi algebra is quadratic and there exists an $L_{\infty}$ morphism from $K$ to a homotopy abelian $L_{\infty}$ algebra that is injective
on the tangent cohomology groups $H^i$ for every $i\ge 3$. 

According to \cite{DMcoppie} the trace maps $\Tr\colon \Ext^i_X(\sF,\sF)\to \Ext^i_X(\Oh_X,\Oh_X)=H^i(\Oh_X)$ are induced by an $L_{\infty}$ morphism $\Tr\colon \RHom(\sF,\sF)\to 
\RHom(\Oh_X,\Oh_X)$ and  $\RHom(\Oh_X,\Oh_X)$ may be represented by the  
abelian DG-Lie algebra given by the Thom-Whitney totalisation of the 
sheaf of abelian Lie algebras $\HOM_{\Oh_X}(\Oh_X,\Oh_X)$ with respect to a given affine 
open cover, see also \cite[pag. 2255]{FMM}. We also refer to \cite{FM} for an explicit 
description of a quasi-isomomorphism between the Thom-Whitney totalisations and the representative
of derived endomorphisms described here in Subsection~\ref{section.derivedEnd}.

Now Theorem~\ref{thm.maintheorem2} follows immediately from the previous considerations applied to the DG-Lie algebra of  Theorem~\ref{thm.maintheorem3}

\begin{remark} Theorem~\ref{thm.maintheorem1} holds in particular for every polystable sheaf $\sF$ on a smooth projective surface $X$ such that  
$\Ext^2_X(\sF,\sF)=0$, since the Kuranishi family is smooth and therefore quadratic.
For instance, according to \cite[Cor. 6.7.3]{maru} the equality  $\Ext^2_X(\sF,\sF)=0$ holds whenever 
$\sF$ is a torsion free $H$-semistable sheaf and $H\cdot K_X<0$. 
\end{remark}

\textbf{Acknowledgements:} we are grateful to the referee for carefully reading the manuscript and for useful comments on the first version of this paper.

\end{document}